\newcommand{\del}[1]{\frac{\partial}{\partial #1}}
\newcommand{\indel}[1]{\partial/\partial #1}
\newcommand{\NN}{\mathbb{N}}
\newcommand{\ZZ}{\mathbb{Z}}
\newcommand{\CC}{\mathbb{C}}
\newcommand{\RR}{\mathbb{R}}
\newcommand{\HH}{\mathbb{H}}
\newcommand{\QQ}{\mathbb{Q}}
\newcommand{\PP}{\mathbb{P}}
\newcommand{\BB} {\mathbb{B}} 
\newcommand{\dd} {\mathrm{d}} 
\newcommand{\ee} {\mathrm{e}} 
\newcommand{\ii} {\mathrm{i}} 
\newtheorem{thm}{Theorem}
\newtheorem{lemma}[thm]{Lemma}
\newtheorem{prop}[thm]{Proposition}
\newtheorem{corollary}[thm]{Corollary}
\theoremstyle{definition}
\theoremstyle{remark}
\newtheorem{rmk}[thm]{Remark}
\newtheorem{example}[thm]{Example}
\numberwithin{equation}{section}
\begin{document}

\title[Uniformizable foliated projective structures]{Uniformizable foliated projective structures along singular foliations}

\author{Bertrand Deroin}
\address{CNRS-Laboratoire AGM-CY, Cergy Paris Université, 2 Avenue Adolphe Chauvin,
	95302, Cergy-Pontoise, France}
\email{bertrand.deroin@cyu.fr}

\author{Adolfo Guillot}
\address{Instituto de Matem\'aticas, Universidad Nacional Aut\'onoma de M\'exico, Ciudad Universitaria  04510,
Ciudad de M\'exico \\  Mexico}
\email{adolfo.guillot@im.unam.mx}

\thanks{MSC 2020: 57M50, 32M25, 34M35, 34M45,  32S65}


\keywords{Holomorphic foliation, foliated projective structure, uniformization, semicompleteness}

\thanks{The authors benefited from the support of the IRL 2001 \emph{Solomon Lefschetz} CNRS-UNAM, and of CY Initiative (ANR-16-IDEX-0008).}


\thanks{Published as: \textsc{Deroin, B.} and \textsc{Guillot, A.} Uniformizable foliated projective structures, \emph{Math.~Z.} \textbf{311}, 85 (2025),  \href{https://doi.org/10.1007/s00209-025-03874-9}{doi:10.1007/s00209-025-03874-9}}

\thanks{\ccby\, CC BY 4.0. This work is licensed under a \href{https://creativecommons.org/licenses/by/4.0/deed.en}{Creative Commons Attribution 4.0 License}}

\begin{abstract} We consider holomorphic foliations by curves on compact complex manifolds, for which we investigate the existence of foliated projective structures (projective structures along the leaves varying holomorphically)CY Cergy Paris Université  that satisfy particular uniformizability properties. Our results show that the singularities of the foliation impose severe restrictions for the existence of such structures. A foliated projective structure separates the  singularities of a foliation into parabolic and non-parabolic ones. For a strongly uniformizable foliated projective structure on a compact K\"ahler manifold, the existence of a single non-degenerate, non-parabolic singularity implies that the foliation is completely integrable. We establish an index theorem that imposes strong cohomological restrictions on the foliations having only non-degenerate singularities that support foliated projective structures making all of them  parabolic. As an application of our results, we prove that, on a projective space of arbitrary dimension,  a foliation by curves  of degree at least two, with only non-degenerate singularities, does not admit a strongly uniformizable foliated projective structure.\end{abstract}

\maketitle

\section{Introduction}
For a complex curve, a projective structure is an atlas for its complex structure with charts taking values in the complex projective line \(\PP^1\), and changes of coordinates in \(\mathrm{Aut}(\PP^1)\). Such structures may be considered along  the leaves of a one-dimensional foliation. Given a singular holomorphic foliation by curves \(\mathcal{F}\) on a complex manifold of dimension \(n+1\), with singular set of codimension at least two, a  \emph{foliated projective structure} is an atlas of holomorphic charts  \( (z_i, t_i)\), defined in the complement of the singular points of \(\mathcal{F}\), taking values in \(\PP^1\times\CC^{n}\), in which \(\mathcal{F}\) is given by the level sets of \(t_i\), with changes of coordinates  of the form 
\[ (z_j, t_j) = \left(\frac{a(t_i) z_i+ b(t_i)}{c(t_i)z_i+d(t_i)}, t_j(t_i)\right).\]
Through such an object, every leaf of \(\mathcal{F}\) inherits a  projective structure, which varies holomorphically  from leaf to leaf 
(see \cite[Section~2.2]{DG} for equivalent definitions).

The existence of these foliated projective structures was the subject of our previous article \cite{DG}. In the present work, we study foliated projective structures from a qualitative point of view, with interest in problems   related to the holomorphic  simultaneous uniformization of the leaves of holomorphic foliations.

A projective structure on a curve is said to be \emph{uniformizable} if it is the quotient of an open subset of \(\PP^1\) under the action of a subgroup of \(\mathrm{Aut}(\PP^1)\), and \emph{strongly uniformizable} if it is uniformizable and the uniformizing open set is simply connected. For example, the  projective structure induced by the \emph{Poincar\'e uniformization}, the uniformization that follows from the Koebe-Poincar\'e uniformization theorem, is strongly uniformizable, and  so are the projective structures induced by quasi-Fuchsian groups. Projective structures given by Schottky uniformizations are uniformizable, although not strongly so. A foliated projective structure is said to be \emph{uniformizable} or \emph{strongly uniformizable} if such is the projective structure that it induces on each and every one of its leaves. 

The aim of this article is to investigate the restrictions for the uniformizability and strong uniformizability  of foliated projective structures imposed, individually and collectively, by the singular points of a foliation.  

The problem of the simultaneous uniformization of the leaves of holomorphic foliations by curves has deserved a lot of attention.  Let us mention two directions (by no means the only ones) along which it has been addressed. We have Ilyahskenko's notion of \emph{simultaneous uniformization}, in which uniformizations vary holomorphically from leaf to leaf, but which are defined in some \emph{covering tubes}, and not on the leaves themselves (such a uniformization may give a leaf various inequivalent projective structures). We refer the reader to Ilyashenko's survey \cite{Ilyashenko-handbook}, as well as to Glutsyuk's article \cite{Glutsyuk-simultaneous} for precise definitions and statements, and limit to say that, for a  foliation whose leaves are hyperbolic Riemann surfaces, admitting a strongly uniformizable foliated projective structure is a stronger condition than admitting a  simultaneous uniformization in the sense of Ilyashenko. Another direction concerns the study of the variation of the Poincar\'e uniformization of the leaves of a foliation, which developed around the seminal works of Verjovsky \cite{verjovsky} and Candel \cite{candel}. For instance, within this trend of ideas, Lins Neto studied the Poincar\'e uniformization of the leaves of singular holomorphic foliations by curves. He proved that for a foliation by curves  on \(\PP^{n}\), of degree \(d\geq 2\), having only non-degenerate singularities, all of its leaves  are uniformized by the Poincar\'e disk, and their uniformizations vary continuously from leaf to leaf \cite{lins-simultaneous}  (see also \cite{candel-gomezmont}). In particular, the leaves of these foliations are endowed with projective structures, which vary continuously from leaf to leaf.

As an application of our results, we will prove the following theorem:

\begin{thm}\label{thm:pn_nounif} Let \(\mathcal{F}\) be a foliation by curves on \(\PP^n\), of  degree \(d\geq 2\), all of whose  singularities are non-degenerate. Then \(\mathcal{F}\) does not admit a strongly uniformizable foliated projective structure.
\end{thm}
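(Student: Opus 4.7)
The plan is to argue by contradiction using the two structural results emphasized in the introduction. Suppose $\mathcal{F}$ is a foliation by curves on $\PP^n$ of degree $d\geq 2$, with only non-degenerate singularities, carrying a strongly uniformizable foliated projective structure $\sigma$. Relative to $\sigma$ each singularity is either parabolic or non-parabolic, and I treat these two alternatives separately.

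\textbf{Case 1: $\sigma$ has at least one non-parabolic singularity.} Since $\PP^n$ is Kähler, the first structural result quoted in the abstract applies and forces $\mathcal{F}$ to be completely integrable. I would then derive a contradiction from the combination: ``degree $\geq 2$, only non-degenerate singularities, completely integrable on $\PP^n$.'' Complete integrability produces, through each point of $\PP^n$, a flag of germs of invariant hypersurfaces for $\mathcal{F}$, and at a non-degenerate singularity this imposes strong rationality/resonance conditions on the ratios of eigenvalues of the linear part. Comparing these conditions with the global obstructions to the existence of invariant algebraic hypersurfaces for foliations of degree $d\geq 2$ on $\PP^n$ (in the spirit of Jouanolou's theorem and its higher-dimensional generalizations), one gets a contradiction.

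\textbf{Case 2: every singularity is parabolic.} Apply the index theorem announced in the abstract. Its left-hand side is a sum, over the singular points of $\mathcal{F}$, of a local invariant which, under the parabolic normalization, depends in a controlled way on the eigenvalue data of each non-degenerate singularity. The number of such singularities of a degree $d$ foliation on $\PP^n$ is $N=\sum_{i=0}^{n} d^{\,i}=(d^{n+1}-1)/(d-1)$, so that this left-hand side is an explicit function of $d$ and $n$. The right-hand side is a characteristic number built out of $c_1(T_{\mathcal{F}})=(1-d)h$ (where $h$ is the hyperplane class) and of $c(T\PP^n)$, hence also an explicit polynomial in $d$ and $n$. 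Comparing the two sides, I expect the resulting identity to fail for every $d\geq 2$, giving the desired contradiction.

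The main obstacle is expected to be Case~2: one must unpack the authors' index theorem, identify the exact form of the local contribution at a parabolic non-degenerate singularity, and then verify by an explicit Chern-class computation on $\PP^n$ that the global identity is violated for $d\geq 2$. Case~1 is more conceptual, and reduces to feeding complete integrability into known rigidity results for foliations on projective space; it should be the easier of the two once the authors' precise notion of ``completely integrable'' is pinned down. Combining both cases yields the theorem.
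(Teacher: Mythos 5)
Your division into the parabolic and non-parabolic cases matches the paper's, and your Case~2 is essentially the paper's argument: once all singularities are parabolic, Theorem~\ref{thm:index_parab} gives a purely cohomological identity, which on \(\PP^n\) (with \(c_1(T_\mathcal{F})=(1-d)h\)) is violated for \(d\geq 2\). Two remarks there: you do not need to sum local contributions over the \(\sum_{i=0}^n d^i\) singular points --- equation~(\ref{eq:index_qf}) is already a global Chern-class identity, so the whole case reduces to the computation you defer; and that computation does work out, via the choices of \(\varphi\) in Remark~\ref{stat:simple}, giving \((1-d)^n=0\) for \(n\) even and \((1-d)^{n-1}(n+d)=0\) for \(n\) odd, hence \(d=1\) in both parities.

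Case~1, however, contains a genuine gap. The contradiction you hope to extract from ``degree \(\geq 2\), non-degenerate singularities, completely integrable on \(\PP^n\)'' does not exist: a generic pencil of conics on \(\PP^2\) defines a completely integrable foliation of degree \(2\) whose \(7=d^2+d+1\) singularities (the four base points and the vertices of the three singular members) are all non-degenerate. Jouanolou-type theorems assert that \emph{generic} foliations have no invariant algebraic hypersurfaces; they place no obstruction on a \emph{particular} foliation being integrable, so they cannot close this case. The paper's actual route in the non-parabolic case is local rather than global, and does not invoke Theorem~\ref{thm:rat1stint} at all: the normal form of Theorem~\ref{thm:proj_lin} together with Lemma~\ref{lemma:completecurve} (packaged as Lemma~\ref{lemma:ratcurve}) produces a smooth rational curve \(C\), invariant by \(\mathcal{F}\), meeting \(\mathrm{sing}(\mathcal{F})\) at the single non-degenerate point \(p\); the Poincar\'e--Hopf formula~(\ref{eq:ph_inv}) then gives \(1=\mu(\mathcal{F},C)=(d-1)\deg(C)+2\), i.e.\ \((d-1)\deg(C)=-1\), which is impossible for \(d\geq 1\). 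To repair your proof you should replace the appeal to complete integrability by this separatrix-plus-index-formula argument (or find some other genuine use of the strong uniformizability at the non-parabolic singularity; mere integrability is not enough).
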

(This result should be weighed against the abundance of foliated projective structures supported by foliations by curves on \(\PP^n\): a foliation of degree \(d\geq 1\) has an affine space of dimension \(\binom{n+2d-2}{n}\) of such structures; see Section~\ref{sec:proj_space}.)

A foliated projective structure is equivalent to the data of a \emph{(foliated, projective) Christoffel symbol}, a map \(\Xi:T_\mathcal{F}\to\mathcal{O}(M)\), that to a vector field \(Z\) tangent to \(\mathcal{F}\) associates the holomorphic function \(\Xi(Z)\), in a way satisfying the modified Leibniz rule
\begin{equation}\label{eq:leibniz}\Xi(fZ)=f^2\Xi(Z)+fZ^2f-\textstyle\frac{1}{2}(Zf)^2;\end{equation}
in this description, projective coordinates along the leaves are given by the solutions of the differential equations given by the vector fields \(Z\) tangent to \(\mathcal{F}\) for which \(\Xi(Z)\equiv 0\) (see \cite[Section~2.2]{DG}). In the presence of a foliated projective structure, a singularity \(p\) of \(\mathcal{F}\) is said to be \emph{parabolic} if for every vector field \(Z\) defining \(\mathcal{F}\) in a neighborhood of \(p\), with singular set of codimension at least two, \(\Xi(Z)|_p=0\). By (\ref{eq:leibniz}), this will happen for every such \(Z\) if and only if it happens for a single one of them. (Remark~\ref{rmk:par} will give a rationale for this terminology.) At a singular point, a foliated projective structure induced by a vector field with singular set of codimension at least two is  parabolic.

Non-parabolic singularities of uniformizable foliated projective structures admit very simple normal forms:

\begin{thm}\label{thm:proj_lin} Let \(\mathcal{F}\) be a foliation endowed with a uniformizable foliated projective structure, \(p\) a singularity of \(\mathcal{F}\) that is non-parabolic with respect to the foliated projective structure. Then, there exist local coordinates centered at \(p\) in which \(\mathcal{F}\) is generated by a linear diagonal vector field  with integral eigenvalues whose projective Christoffel symbol is the  constant~\(-1/2\). \end{thm}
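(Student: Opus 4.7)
The plan is in three steps: normalize the Christoffel symbol to the constant $-1/2$ near $p$, use uniformizability to show the generator has periodic flow, and invoke a linearization theorem for $\CC^{*}$-actions.

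\emph{Step 1 (normalization).} I pick any local generator $Z_{0}$ of $\mathcal{F}$ near $p$ with $Z_{0}(p)=0$ and singular set of codimension at least two. Non-parabolicity at $p$ yields $\Xi(Z_{0})(p)\neq 0$. By \eqref{eq:leibniz}, for a constant $c$ we have $\Xi(cZ_{0})(p)=c^{2}\Xi(Z_{0})(p)$ (since $Z_{0}$ vanishes at $p$), so after rescaling I may assume $\Xi(Z_{0})(p)=-1/2$. I then look for a holomorphic multiplier $u$ near $p$ with $u(p)=1$ such that $Z:=uZ_{0}$ satisfies $\Xi(Z)\equiv-1/2$ identically; by \eqref{eq:leibniz} this is the Riccati-type equation
\[
u^{2}\,\Xi(Z_{0})+u\,Z_{0}^{2}u-\tfrac{1}{2}(Z_{0}u)^{2}\equiv-\tfrac{1}{2},
\]
which, restricted to a leaf, is the ODE relating two choices of generator of the same projective structure. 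The substitution $u=\eta^{-2}$ reduces it to the leafwise linear second-order (Sturm--Liouville) equation underlying the projective structure, and the existence of a holomorphic $\eta$ near $p$ with $\eta(p)=1$ follows from a holomorphic extension argument across the singular point.

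\emph{Step 2 (periodicity from uniformizability).} With $\Xi(Z)\equiv-1/2$, on every leaf $L$ the Schwarzian equation $\{t,s_{Z}\}=-1/2$ forces the projective coordinate $t$ to be M\"obius-equivalent to $\ee^{s_{Z}}$, where $s_{Z}$ is the time of $Z$ along $L$. Hence the developing map on the universal cover $\tilde L$ is, up to post-composition by a M\"obius transformation, the exponential $s\mapsto\ee^{s}$; its image lies in $\CC^{*}\subset\PP^{1}$ and it is not injective on $\CC$. Uniformizability requires $L$ to be isomorphic, as a Riemann surface with projective structure, to the quotient of this image by the holonomy. A simply connected leaf (biholomorphic to $\CC$) would force the developing map to be a biholomorphism onto its image, which fails for $s\mapsto\ee^{s}$; a doubly-periodic (torus) leaf is also excluded, because the resulting holonomy lies in the one-parameter rotation subgroup of $\mathrm{Aut}(\PP^{1})$, which admits no discrete subgroups of rank two. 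Thus every orbit of $Z$ near $p$ is periodic, and $Z$ generates a local holomorphic $\CC^{*}$-action fixing $p$.

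\emph{Step 3 (linearization).} By the classical linearization theorem for holomorphic $\CC^{*}$-actions at a fixed point, there exist local holomorphic coordinates centered at $p$ in which the $\CC^{*}$-action is linear and diagonal: $Z=\sum\lambda_{i}z_{i}\,\partial/\partial z_{i}$ with $\lambda_{i}\in\ZZ$, the integer weights of the action. Since the Christoffel symbol is intrinsic to the foliated projective structure, the identity $\Xi(Z)\equiv-1/2$ persists in these new coordinates. The main technical obstacle is Step 1, the construction of the holomorphic multiplier $u$ across the singular point; Step 2 provides the key geometric rigidity from uniformizability (no $\CC$-leaves and no compact leaves), while Step 3 then invokes classical linearization theory for holomorphic group actions to finish the argument.
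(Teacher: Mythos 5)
There are two genuine gaps, one in Step 1 and one in Step 2, and together they undermine the proposal. In Step 1 you assert the existence of a holomorphic multiplier \(u\) near \(p\) with \(\Xi(uZ_0)\equiv-1/2\) via ``a holomorphic extension argument across the singular point,'' but this is precisely the hard content of the theorem, not a preliminary normalization. The equation for \(u\) is a leafwise Riccati/Sturm--Liouville equation; along each individual leaf it has a two-parameter family of solutions, and the issue is whether these can be chosen coherently so as to extend holomorphically to the singular point where the leaves accumulate. Nothing in your sketch addresses this, and without uniformizability the statement is false, so the uniformizability hypothesis must enter here in an essential way that you have not identified. The paper circumvents this entirely: it works on \(U\times\CC^2\) with the projective geodesic field \(X\) and its companions \(H,Y\) satisfying the \(\mathfrak{sl}(2,\CC)\) relations; the vector field \(A=H-\xi_0^{-1}X\) is manifestly holomorphic there (no multiplier needs to be constructed), uniformizability is converted into semicompleteness of \(X\), and the normalized generator with \(\Xi\equiv-1/2\) is only extracted at the very end by descending to the leaf space of \(H\).

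Step 2 is also flawed as stated. From \(\Xi(Z)\equiv-1/2\) it is true that \(\ee^{s_Z}\) is a projective coordinate, but injectivity of the developing map does not force the orbits of \(Z\) to be \(2\ii\pi\)-periodic: \(s\mapsto\ee^{s}\) is injective on many simply connected domains (e.g.\ small disks, or any domain meeting each line \(\Im(s)=\mathrm{const}\bmod 2\pi\) once), so your claim that a simply connected leaf ``would force the developing map to be a biholomorphism onto its image, which fails for \(s\mapsto\ee^{s}\)'' is incorrect; and torus leaves do not occur near a singular point, so that exclusion is vacuous. Moreover, even granting that each leaf is an annulus in the \(s\)-coordinate, you would still need the time-\(2\ii\pi\) flow of \(Z\) to be \emph{defined} (a completeness/semicompleteness statement) before Bochner--Cartan can be applied; this is exactly what the paper's careful path-development argument for the semicompleteness of \(X\), combined with the explicit formula \(\Phi^t_{H+cX}=\Phi^t_H\circ\Phi^{c(\ee^t-1)}_X\) (which transfers the \(2\ii\pi\)-periodicity of \(H\) to \(H+cX\)), is designed to supply. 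Your Step 3 (Bochner--Cartan linearization of a periodic semicomplete field with integer eigenvalues) does match the paper's mechanism, but it is only reached after the two missing arguments above are supplied.
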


In particular, if \(p\) is a non-degenerate singularity of \(\mathcal{F}\) which is non-parabolic for the foliated projective structure, its eigenvalues are commensurable.  

In the strongly uniformizable case, a single non-degenerate, non-parabolic singularity imposes, in the case where the ambient manifold is K\"ahler, the ``meromorphic complete integrability'' of the foliation:

\begin{thm}\label{thm:rat1stint} Let \(\mathcal{F}\) be a foliation by curves on a compact complex K\"ahler manifold \(M\) of dimension \(n+1\) endowed with a strongly uniformizable foliated projective structure. If there exists a non-degenerate singularity of \(\mathcal{F}\) which is non-parabolic for the foliated projective structure, there exist an analytic space \(V\)	of dimension  \(n\) and a dominant meromorphic map \(f:M\dashrightarrow V\) such that the leaves of \(\mathcal{F}\) are contained in the fibers of~\(f\). 
\end{thm}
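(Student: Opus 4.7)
The plan is to combine the local normal form at $p$ given by Theorem~\ref{thm:proj_lin} with a global algebraization argument using the K\"ahler hypothesis. By Theorem~\ref{thm:proj_lin}, we choose coordinates $(x_0,\dots,x_n)$ on a neighborhood $U$ of $p$ in which $\mathcal F$ is generated by $Z=\sum_i\lambda_ix_i\,\partial/\partial x_i$ with $\lambda_i\in\ZZ$ nonzero integers, and in which $\Xi(Z)=-1/2$. Since the $\lambda_i$ are integers, the monomial functions $x_i^{\lambda_0}/x_0^{\lambda_i}$, for $i=1,\dots,n$, are meromorphic first integrals of $\mathcal F$ on $U$, so $\mathcal F|_U$ is the fiber foliation of a meromorphic map $g:U\dashrightarrow\CC^{n}$. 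A direct manipulation of the transformation rule~(\ref{eq:leibniz}) then shows that, on each leaf, a projective coordinate is a fractional-power monomial in the $x_i$ (along the flow of $Z$), and that the local portion of each leaf identifies projectively with a punctured disk around the image of $p$ in $\PP^{1}$.

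The role of the strongly uniformizable hypothesis is to upgrade this local picture to one of genuinely algebraic leaves. For the developing map of each leaf to be a biholomorphism onto a simply connected open of $\PP^{1}$, the puncture present in the local picture must be filled in by additional points of the leaf; since those additional points cannot come from $\mathrm{Sing}(\mathcal F)$, they must come from outside $U$. A short argument then shows that each leaf of $\mathcal F$ meeting $U$ has compact analytic closure in $M$ and is, away from $\mathrm{Sing}(\mathcal F)$, isomorphic to an open subset of $\PP^{1}$.

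To globalize, consider the $n$-parameter family of compact leaves passing through $U$; it defines a holomorphic map from a neighborhood of the leaf through a generic point of $U$ into the Barlet cycle space $\mathcal C(M)$. Let $V$ be the irreducible component of $\mathcal C(M)$ containing the image. Here the K\"ahler hypothesis is essential: by Fujiki's theorem, $V$ is compact (indeed Moishezon), and the tautological incidence correspondence $I=\{(x,C)\in M\times V:x\in C\}$ projects to a dominant meromorphic map $f:M\dashrightarrow V$ whose generic fibers contain the leaves of $\mathcal F$. Since the leaves through $U$ form an $n$-parameter family, they sweep out a Zariski-open dense subset of $M$, and hence every leaf of $\mathcal F$ is, generically, contained in a fiber of $f$.

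The main obstacle is the globalization step: verifying that the local $n$-parameter family of compact leaves near $p$ genuinely extends to a family sweeping out the whole of $M$, and invoking Fujiki's theorem to guarantee that the associated Barlet component is an algebraic quotient. Controlling the generic fiber structure of $f$ and handling leaves contained in its indeterminacy locus require some additional care with $\mathrm{Sing}(\mathcal F)$, but once the globalization is in hand the conclusion of the theorem follows.
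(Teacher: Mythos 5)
Your overall strategy coincides with the paper's: use the local normal form at the non-parabolic singularity to produce an \(n\)-parameter family of compact rational leaves near \(p\), then pass to a parameter space of compact cycles (Barlet/Douady), use the K\"ahler hypothesis to get compactness of its components, and project the incidence variety to obtain the meromorphic first integral. This is exactly Proposition~\ref{prop:local_strong} followed by Theorems~\ref{thm:inv_leavess} and~\ref{thm:gm_kahler}. However, there is a genuine gap in your globalization step. You take \(V\) to be the \emph{entire} irreducible component of \(\mathcal{C}(M)\) containing the family. That component may be much larger than \(n\)-dimensional, and through a generic point of \(M\) there may pass a positive-dimensional family of its cycles (think of lines in \(\PP^3\): the component is the four-dimensional Grassmannian and a two-parameter family of lines passes through each point); in that case the incidence correspondence does not define a meromorphic map \(M\dashrightarrow V\), and its generic ``fiber'' need not be \(\mathcal{F}\)-invariant. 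One must first cut down to the locus of cycles \emph{invariant by \(\mathcal{F}\)}. The assertion that this locus is a closed analytic subset of the component --- and hence, by the proper mapping theorem, that its image in \(M\) is a closed analytic subset which, containing the open saturated neighborhood of \(p\), must be all of \(M\) --- is precisely the non-trivial content of the paper's Appendix (Lemma~\ref{lemma:app}, via the stabilizing chain of ideal sheaves \(\mathcal{J}_S^i\)). Without it, your claim that the family ``sweeps out a Zariski-open dense subset of \(M\)'' is unjustified: a priori the union of the compact invariant leaves could be a non-dense open set, since a limit of invariant cycles need not obviously be invariant. The generic injectivity of the projection \(I\to M\) (needed for \(f\) to be a map rather than a correspondence) also only holds after restricting to the invariant locus, where it follows from uniqueness of the leaf through a regular point.

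Two smaller points on the local part. First, Theorem~\ref{thm:proj_lin} alone gives integer eigenvalues; to conclude that the nearby leaves close up into compact rational curves you need the sharper Corollary~\ref{coro:norforstrong} (\(\lambda_i=\pm1\)), which comes from applying Lemma~\ref{lemma:completecurve} to the separatrices under the \emph{strong} uniformizability hypothesis --- and you must also dispose of the alternative in which \(M\cong\PP^{n+1}\) with the foliation by lines through a point (trivial, but it is a separate case). Second, your description of the local leaf as a punctured disk around the image of \(p\) is only correct for the separatrices; the generic local leaf is an annulus, and the argument that it compactifies is that each of its two boundary circles follows a compact rational separatrix leaf in \(W^{\pm}\) and gets a disk glued to it --- this is the gluing argument in the proof of Proposition~\ref{prop:local_strong}, not a direct consequence of filling in a puncture.
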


In the case of surfaces, the K\"ahler hypothesis  may be dismissed (Remark~\ref{rmk:fi_surfaces}). For a foliation with non-degenerate singularities  and a foliated projective structure which makes them all parabolic, we have the following index theorem. It is an extension of Theorem 5.1 in \cite{DG},  in which uniformizability plays no part.

\begin{thm}\label{thm:index_parab} Let \(M\) be a compact complex manifold  of dimension \(n\), \(\mathcal{F}\) a holomorphic foliation by curves, all of whose singular points are  non-degenerate, supporting a foliated projective structure for which they are all parabolic. Let \(\varphi(x_1,\ldots,x_{n+1})\) be a symmetric homogeneous polynomial of degree \(n+1\), and, for \(i=0,\ldots,n+1\), let \(\widehat{\varphi}_i(x_1,\ldots,x_n)\) be the symmetric homogeneous polynomial of degree \(i\) defined by
\[\varphi(x_1,\ldots,x_n,x_{n+1})=\sum_{i=0}^{n+1}x_{n+1}^{n+1-i}\widehat{\varphi}_i(x_1,\ldots,x_n).\]
Then, 
\begin{equation}\label{eq:index_qf}\sum_{k>0}  c_1^{2k}(T_\mathcal{F})\widehat{\varphi}_{n-{2k}}(c(TM-T_\mathcal{F}))=0.\end{equation}\end{thm}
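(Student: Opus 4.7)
My plan is to build, from the foliated projective structure, a holomorphic partial connection along $\mathcal{F}$ on a rank-$3$ bundle $\mathcal{V}$ whose only nontrivial Chern class is a multiple of $c_1(T_\mathcal{F})^2$, and then to apply Bott's vanishing theorem to derive~(\ref{eq:index_qf}) directly. Parabolicity of the singularities is precisely what permits this partial connection, initially defined only away from $\mathrm{Sing}(\mathcal{F})$, to extend holomorphically across the singular set, so that Bott's theorem applies globally on $M$.

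On $M_0:=M\setminus\mathrm{Sing}(\mathcal{F})$, the foliated projective structure is equivalent to a flat holomorphic partial $\mathrm{PGL}_2$-connection along the leaves of $\mathcal{F}$; the associated adjoint bundle $\mathcal{V}$ is a rank-$3$ holomorphic vector bundle, globally defined on $M$ as the bundle of $\mathfrak{sl}_2$-valued projective connections along the foliation, and it inherits a flat holomorphic partial connection along $\mathcal{F}$ on $M_0$. A direct computation from the filtration induced by the section (with line sub-bundle $L$ satisfying $L^{\otimes 2}\cong T_\mathcal{F}$ along the leaves) gives $c(\mathcal{V})=(1+c_1(T_\mathcal{F}))\cdot 1\cdot(1-c_1(T_\mathcal{F}))=1-c_1(T_\mathcal{F})^2$, so $c_2(\mathcal{V})=-c_1(T_\mathcal{F})^2$ and $c_1(\mathcal{V})=c_3(\mathcal{V})=0$. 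I would then check that parabolicity $\Xi(Z)|_p=0$ at each singular point $p$ is exactly what is needed for the partial connection on $\mathcal{V}$ to extend holomorphically through $p$: by the scaling rule~(\ref{eq:leibniz}), $\Xi(Z)|_p$ is an intrinsic element of $(T_\mathcal{F}|_p^\ast)^{\otimes 2}$, and a local computation at a non-degenerate singularity identifies it with the local residue of the flat partial connection on $\mathcal{V}$; its vanishing thus delivers a holomorphic partial connection on $\mathcal{V}$ over all of $M$.

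With the global partial connection in hand, Bott's vanishing theorem applies to $\mathcal{V}$ together with $TM-T_\mathcal{F}$ (the latter equipped with its tautological Bott partial connection): any polynomial in their Chern classes of total complex degree exceeding the codimension $n-1$ vanishes in $H^\ast(M,\mathbb{C})$. Each summand
\[c_1(T_\mathcal{F})^{2k}\,\widehat{\varphi}_{n-2k}\!\left(c(TM-T_\mathcal{F})\right)=(-1)^k\,c_2(\mathcal{V})^k\,\widehat{\varphi}_{n-2k}\!\left(c(TM-T_\mathcal{F})\right)\]
has total complex degree $2k+(n-2k)=n>n-1$, and so vanishes individually in $H^\ast(M,\mathbb{C})$; summation over $k\geq 1$ yields~(\ref{eq:index_qf}). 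The absence of the $k=0$ term in the statement is natural: it would correspond to $\widehat{\varphi}_n(c(TM-T_\mathcal{F}))$, a polynomial purely in the Chern classes of $TM-T_\mathcal{F}$, on which the projective structure places no extra constraint beyond the classical Baum--Bott identity.

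The main obstacle is the local step: identifying $\Xi(Z)|_p$ with the local residue of the $\mathrm{PGL}_2$-partial-connection on $\mathcal{V}$ at a non-degenerate singular point, and verifying that parabolicity allows a holomorphic (rather than merely meromorphic or logarithmic) extension. The global part of the argument is essentially formal once this identification is made, but the local matching requires an explicit model for the flat $\mathrm{PGL}_2$-bundle near a non-degenerate singularity and a careful comparison of normalizations; this is the place where the parabolicity hypothesis enters substantively and is expected to proceed along the same lines as Theorem~5.1 of~\cite{DG}, which the present result extends from one specific symmetric polynomial to arbitrary~$\varphi$.
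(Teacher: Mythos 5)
There is a genuine gap in the global step. Bott's vanishing theorem is a statement about \emph{regular} foliations: its Chern--Weil proof requires \(T_\mathcal{F}\) to be a subbundle of \(TM\) at every point, so that the curvature of an adapted connection lies in the ideal generated by \(n-1\) pointwise-independent \(1\)-forms. At a singular point of \(\mathcal{F}\) this fails, and no holomorphic extension of the flat partial connection on \(\mathcal{V}\) across \(\mathrm{sing}(\mathcal{F})\) repairs it. What is true for a singular foliation is Baum--Bott \emph{localization}: a degree-\(n\) polynomial in the Chern classes of bundles carrying flat partial connections along \(\mathcal{F}\) equals a sum of residues at the singular points, and these residues are generally nonzero. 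Your own \(k=0\) term \(\widehat{\varphi}_n(c(TM-T_\mathcal{F}))\) is the textbook illustration: it is of exactly the same nature as the \(k>0\) terms (total degree \(n\), built from data flat along the leaves away from \(\mathrm{sing}(\mathcal{F})\)), yet it equals \(\sum_p\widehat{\varphi}_n(\lambda_1,\ldots,\lambda_n)/(\lambda_1\cdots\lambda_n)\neq 0\) in general. The criterion ``total degree exceeds the codimension'' cannot distinguish \(k=0\) from \(k>0\), so it cannot be the reason the \(k>0\) terms vanish. Note also that your argument would prove that \emph{each} summand vanishes individually, a strictly stronger statement than the theorem, which only asserts the vanishing of the sum --- another sign that the residues, not a global vanishing principle, are doing the work.

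The paper's proof goes through the residues. It forms the \(\PP^1\)-bundle \(\pi:N\to M\) (transition maps \(u_i=g_{ij}u_j-Z_jg_{ij}\)) and the projectivized geodesic foliation \(\mathcal{G}\) given by \(W_i=Z_i-(\frac{1}{2}u_i^2+\rho_i)\,\partial/\partial u_i\), applies Baum--Bott to \(\mathcal{G}\) on the \((n+1)\)-dimensional manifold \(N\), and computes the residue at the degenerate singularity of \(\mathcal{G}\) over each \(p\in\mathrm{sing}(\mathcal{F})\) by the perturbation method of Bracci--Suwa: the singularity splits into two non-degenerate ones with extra eigenvalues \(\pm\mu(\epsilon)\), \(\mu(\epsilon)=\sqrt{2(\epsilon-\rho_i)}\), and parabolicity (\(\rho_i(p)=0\)) forces \(\mu\to 0\), so the limiting residue is \(2\widehat{\varphi}_n(\lambda_1,\ldots,\lambda_n)/(\lambda_1\cdots\lambda_n)\). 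Summing over \(p\) and invoking Baum--Bott for \(\mathcal{F}\) itself with the polynomial \(\widehat{\varphi}_n\), these residues cancel the \(k=0\) term, leaving exactly \eqref{eq:index_qf}. If you wish to keep the rank-\(3\) adjoint bundle \(\mathcal{V}\) (whose Chern class computation \(c(\mathcal{V})=1-c_1^2(T_\mathcal{F})\) is fine), you would still have to set up a localization for the pair \((\mathcal{V},TM-T_\mathcal{F})\) and compute the residue of \(c_2(\mathcal{V})^k\,\widehat{\varphi}_{n-2k}\) at each singular point, showing the total vanishes; that is precisely the local work your proposal defers, and it is where parabolicity actually enters.
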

Notice that the last sum does not include the term for \(k=0\). In this statement, as in the Baum-Bott index theorem (which  will be used for its proof), evaluating a symmetric polynomial on the Chern polynomial of a virtual bundle consists in evaluating it at the associated Chern roots \cite{baum-bott}. 

Our results imply that manifolds of even dimensions admitting strongly uniformizable foliated projective structures are very special:  
\begin{corollary}\label{coro:unif-typegeneral} Let \(M\) be a compact complex algebraic manifold of even dimension, \(\mathcal{F}\) a holomorphic foliation by curves on \(M\) with only non-degenerate, non-dicritical singularities. If \(\mathcal{F}\) supports a strongly uniformizable foliated projective structure, \(\mathcal{F}\) is not of general type.
\end{corollary}

Foliations which are not of general type have been classified in dimension two (within the \emph{birational classification of foliations on surfaces} \cite{brunella-birational}), and, at least in  principle, can be described in higher dimensions as well. The result does not hold for odd-dimensional manifolds: as we shall see in Proposition~\ref{prop:oddgentype}, there  exist regular foliations of general type on  odd-dimensional manifolds which support strongly uniformizable foliated projective structures.

Despite the above negative results, we do have interesting  examples of strongly uniformizable foliated projective structures on foliations with non-trivial singular set. Two families will be described in Section~\ref{sec:examples}.  

An important ingredient in the proof of Theorem~\ref{thm:rat1stint} is a description, due to Jorge Pereira, of the set formed by the  compact analytic subspaces invariant by a foliation on a K\"ahler compact manifold. We will discuss this result, along with some of its consequences, in Appendix~\ref{app:cp}.

\section{Curves, foliations,  projective structures and uniformizability}

\subsection{Curves}\label{ssec:curves} A projective structure on a (connected) curve \(C\) comes with a \emph{monodromy} representation  \(\mathrm{mon}:\pi_1(C)\to \mathrm{PSL}(2,\CC)\) and a \emph{developing} map \(D:\widetilde{C}\to \PP^1\), an equivariant local biholomorphism.  A projective structure on a curve \(C\) is said to be \emph{uniformizable} if  it is isomorphic to the quotient of an open subset of \(\PP^1\) under the action of a subgroup of \(\mathrm{PSL}(2,\CC)\), and \emph{strongly uniformizable} if, moreover, this open subset is simply connected. A projective structure on a curve is strongly uniformizable if and only if its developing map is injective.

For a projective structure \(\sigma\) on a connected curve \(C\), and \(C'\subseteq C\) a connected open subset, for the induced projective structure \(\sigma'\) on \(C'\),
\begin{itemize}\item  if \(\sigma\) is uniformizable, so is \(\sigma'\);
\item if \(\sigma\) is strongly uniformizable, and the inclusion \(C'\hookrightarrow C\) injects the fundamental group of \(C'\), \(\sigma'\) is strongly uniformizable as well.
\end{itemize}

Projective geometry is the most general of the geometries given by transitive and faithful holomorphic actions of complex Lie groups on curves, the others being affine and translation geometry (given by the action of the affine group, or of its translation subgroup, on \(\CC\)). The uniformizability property can be considered for all of these geometries, and a certain framework is common to them all.  By a straightforward adaptation of \cite[Prop.~1.6.1]{guillot-survey}, we have:
\begin{prop}\label{prop:sc_develop} For a projective structure on a curve \(C\), the following are equivalent:
\begin{itemize}
\item the projective structure is uniformizable;
\item for the covering map 
\(\widehat{C}\to C\) associated to the kernel of the monodromy, the induced developing map \(\widehat{D}:\widehat{C}\to\PP^1\) is one-to-one;
\item for every open path \(\alpha:[0,1]\to C\), and for every lift \(\widetilde{\alpha}:[0,1]\to \widetilde{C}\), the path \(D\circ\widetilde{\alpha}\) is  also an open one.
\end{itemize}
\end{prop}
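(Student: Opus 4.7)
The plan is to establish (1) $\Leftrightarrow$ (2) directly and then (2) $\Leftrightarrow$ (3), with (3) $\Rightarrow$ (2) being the only nontrivial step. For (1) $\Leftrightarrow$ (2), the key observation is that if $C = \Omega/\Gamma$ with $\Omega \subset \PP^1$ open and $\Gamma < \mathrm{PSL}(2,\CC)$, then $\Omega \to C$ is precisely the covering associated to $\ker(\mon\colon\pi_1(C)\to\Gamma)$, so $\widehat{C}$ is identified with $\Omega$ and $\widehat{D}$ with the inclusion. Conversely, any injective local biholomorphism is automatically an open embedding, so $\widehat{D}$ realizes $\widehat{C}$ as an open set $\Omega \subset \PP^1$; the deck group of $\widehat{C} \to C$, equal to $\pi_1(C)/\ker\mon$ and identified via equivariance of $\widehat{D}$ with $\mon(\pi_1(C))$, then acts on $\Omega$ by projective transformations, yielding $C \cong \Omega/\mon(\pi_1(C))$.

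For (2) $\Rightarrow$ (3), factor $D = \widehat{D}\circ\pi$, where $\pi\colon\widetilde{C}\to\widehat{C}$ is the intermediate covering. If $\alpha$ is an open path in $C$ with lift $\widetilde{\alpha}$, then the intermediate lift $\pi\widetilde{\alpha}$ in $\widehat{C}$ must also have distinct endpoints, for otherwise further projection by $\widehat{C}\to C$ would force $\alpha$ to be closed; injectivity of $\widehat{D}$ then preserves the distinctness, so $D\widetilde{\alpha}$ is open in $\PP^1$.

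The substantive direction is (3) $\Rightarrow$ (2), which I would prove contrapositively. Suppose $\widehat{D}(q_0) = \widehat{D}(q_1) = v_0$ with $q_0 \ne q_1$ in $\widehat{C}$, and choose lifts $\widetilde{q}_i \in \widetilde{C}$; any path $\widetilde{\alpha}$ from $\widetilde{q}_0$ to $\widetilde{q}_1$ has $D$-image a loop at $v_0$. If the images of $q_0,q_1$ in $C$ differ, the projection of $\widetilde{\alpha}$ is already an open path in $C$ and (3) is violated directly. The delicate case is when these images coincide: then $q_1 = \bar{g}\cdot q_0$ for some nontrivial $\bar{g}\in\mon(\pi_1(C))$, which must fix $v_0$. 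I would resolve this by perturbation using the local biholomorphism structure of $D$ near $\widetilde{q}_0$ and $\widetilde{q}_1$: for $v$ close to $v_0$ there are unique nearby preimages $\widetilde{q}_0'(v),\widetilde{q}_1'(v)\in\widetilde{C}$, and $\mon$-equivariance gives $\widetilde{q}_1'(v) = g\cdot\widetilde{q}_0'(\bar{g}^{-1}v)$ for any $g\in\pi_1(C)$ lifting $\bar{g}$, so their images in $C$ coincide only at the at most two fixed points of $\bar{g}$ in $\PP^1$. Picking any other $v$ reduces us to the preceding case and produces an open path in $C$ whose $D$-lift is a loop, contradicting (3). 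The main obstacle is precisely this perturbation step: one must exploit that nontrivial elements of $\mathrm{PSL}(2,\CC)$ have only finitely many fixed points in order to propagate the collision at $v_0$ to nearby non-fixed points, where (3) is genuinely violated.
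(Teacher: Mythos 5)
Your argument is correct. Note that the paper does not actually write out a proof of this proposition: it only asserts that it follows ``by a straightforward adaptation'' of Proposition~1.6.1 of the cited survey, which treats the analogous statement for translation structures (semicomplete vector fields). Your write-up therefore supplies the adaptation in full, and it correctly isolates the one point where the projective case genuinely differs from the translation case: in the implication from the open-path criterion to injectivity of \(\widehat{D}\), a collision \(\widehat{D}(q_0)=\widehat{D}(q_1)=v_0\) with \(q_1=\bar g\cdot q_0\) can occur at a fixed point of the nontrivial M\"obius transformation \(\bar g\) (for translations of \(\CC\) this case is vacuous), and your perturbation to a nearby non-fixed value \(v\) resolves it. Two small points of hygiene: the identity \(\widetilde q_1'(v)=g\cdot\widetilde q_0'(\bar g^{-1}v)\) holds for the particular \(g\in\pi_1(C)\) with \(g\cdot\widetilde q_0=\widetilde q_1\), not for an arbitrary lift of \(\bar g\) (though nothing downstream depends on this); and the assertion that the images in \(C\) of \(\widetilde q_0'(v)\) and \(\widetilde q_1'(v)\) coincide only when \(\bar g^{-1}v=v\) deserves the one-line justification that, on a sufficiently small neighborhood \(U\) of \(\widetilde q_0\), the deck action satisfies \(hU\cap U=\emptyset\) for \(h\neq e\), forcing the identifying deck element to be \(g\) itself and reducing the coincidence to \(\widetilde q_0'(\bar g^{-1}v)=\widetilde q_0'(v)\). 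With those clarifications the proof is complete.
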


Some obstructions for uniformizability  are localized at the singularities of projective structures. Let us discuss them based on \cite[Section~3.2]{DG}. Let \(\Delta\subset\CC\) be the unit disk, \(\Delta^*=\Delta\setminus\{0\}\). For a projective structure on \(\Delta^*\), the origin is said to be a \emph{Fuchsian singular point} if \(z^2\Xi(\indel{z})\) extends as a holomorphic function to~\(0\). In such a case, if 
\begin{equation} \label{eq:def:ram_ind} z^2\Xi\left(\del{z}\right)= \frac{1}{2}\left(1-\frac{1}{\nu^2}\right)+\cdots,
\end{equation}
\(\nu\in \CC^*\cup\{\infty\}\) is said to be the \emph{(projective) ramification index} of the singular point. It is only well-defined up to sign, and is independent of the chosen coordinate. In this case, from (\ref{eq:def:ram_ind}), by (\ref{eq:leibniz}),
\begin{equation} \label{eq:ram_ind}\Xi\left((\lambda z+\cdots)\del{z}\right)=-\frac{\lambda^2}{2\nu^2}+\cdots.\end{equation}
Reciprocally, if \(\Xi((\lambda z+\cdots)\indel{z})\) is holomorphic, the projective structure has a Fuchsian singular point, and the ramification index is given by the above expression.

\begin{prop}  For a uniformizable projective structure on \(\Delta^*\), the origin is a Fuchsian singular point, and the ramification index is either an integer or infinity.
\end{prop}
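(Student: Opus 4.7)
The plan splits according to whether the monodromy \(\gamma\) of the projective structure around the puncture has finite or infinite order.

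If \(\gamma\) has finite order \(n\geq 1\), the cover associated to the kernel of monodromy is the \(n\)-fold cyclic cover \(\Delta^{*}\to\Delta^{*}\), \(w\mapsto z=w^{n}\), and by the preceding proposition the induced developing map on this cover is injective into \(\PP^{1}\). Riemann's removable singularity theorem then extends it holomorphically across the puncture, and with it the pulled-back projective structure; in particular \(\widehat{\Xi}(\indel{w})\) is holomorphic at \(w=0\). Applying (\ref{eq:leibniz}) to \(\indel{w}=nw^{n-1}\indel{z}\) yields the identity
\[\widehat{\Xi}(\indel{w})=n^{2}w^{2n-2}\,\Xi(\indel{z})-\frac{n^{2}-1}{2w^{2}},\]
and, since \(w^{2n}\Xi(\indel{z})\) is the pullback of \(z^{2}\Xi(\indel{z})\) by \(w\mapsto w^{n}\), holomorphicity at \(w=0\) forces \(z^{2}\Xi(\indel{z})\) to extend holomorphically to \(z=0\) with value \(\tfrac{1}{2}(1-1/n^{2})\). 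This is the Fuchsian condition with integer ramification index \(\nu=n\).

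For infinite \(\gamma\), the relevant cover is the universal cover \(\HH\to\Delta^{*}\) (via \(z=e^{2\pi\ii w}\)), and \(\widetilde{D}:\HH\to\PP^{1}\) is injective onto an open set \(U\subset\PP^{1}\) on which \(\langle\gamma\rangle\) acts freely with quotient biholomorphic to \(\Delta^{*}\). I first rule out a loxodromic \(\gamma\): after normalizing to \(\gamma(u)=\lambda u\) and passing to a single-valued branch \(V\subset\CC\) of \(\log U\), \(V\) is a proper simply connected domain invariant under a translation \(v\mapsto v+\tau\) with \(\tau=\log\lambda\), whose Riemann mapping to \(\HH\) conjugates this translation to a hyperbolic element of \(\mathrm{PSL}(2,\RR)\) (the two ``ends'' of \(V\) along the \(\tau\)-direction produce two distinct fixed boundary points), so \(V/\tau\) is a proper annulus, not a punctured disk---a contradiction. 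Thus \(\gamma\) is parabolic. Normalizing \(\gamma(u)=u+1\), the equivariance \(\widetilde{D}(w+1)=\widetilde{D}(w)+1\) gives \(\widetilde{D}(w)=w+h(z)\) with \(h\) holomorphic on a punctured disk; the induced biholomorphism of quotients \(z\mapsto ze^{2\pi\ii h(z)}\) sends puncture to puncture, forcing \(e^{2\pi\ii h(z)}\) to be bounded near \(0\), whence, by Casorati--Weierstrass, \(h\) itself extends holomorphically to \(0\). Then all \(w\)-derivatives of \(h(e^{2\pi\ii w})\) tend to \(0\) as \(z\to 0\), so the Christoffel symbol \(\Xi(\indel{w})\) in the \(w\)-coordinate, which is the Schwarzian derivative of \(\widetilde{D}(w)\), tends to \(0\) as well. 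Combining this with the transformation formula \(\Xi(\indel{w})=-4\pi^{2}z^{2}\Xi(\indel{z})+2\pi^{2}\) coming from \(\indel{w}=2\pi\ii z\,\indel{z}\) via (\ref{eq:leibniz}), one concludes that \(z^{2}\Xi(\indel{z})\) extends holomorphically to \(0\) with value \(\tfrac{1}{2}\), the Fuchsian condition with \(\nu=\infty\).

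The main obstacle is ruling out the loxodromic case: the Riemann mapping \(V\cong\HH\) is not generally a Möbius transformation, so the type of the induced automorphism on the universal cover is not automatic from the \(\mathrm{PSL}(2,\CC)\)-type of \(\gamma\), and one must argue via the conformal geometry of the ends of the \(\tau\)-invariant domain.
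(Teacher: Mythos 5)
Your strategy coincides with the paper's (the paper only sketches the argument and defers the details to Proposition~6 of~\cite{guillot-rebelo}): split according to the type of the monodromy generator, use the injectivity of the developing map on the cover associated to the kernel of the monodromy, extend it across the puncture, and compute \(z^2\Xi(\indel{z})\) via~(\ref{eq:leibniz}). Your computations in the finite-order and parabolic cases are correct (including the identity \(\widehat{\Xi}(\indel{w})=n^{2}w^{2n-2}\Xi(\indel{z})-\frac{n^{2}-1}{2w^{2}}\) and the identification of \(\Xi(\indel w)\) with the Schwarzian of \(\widetilde D\)). However, your case analysis of an infinite-order \(\gamma\) is incomplete: besides loxodromic and parabolic, \(\gamma\) could be elliptic of infinite order (an irrational rotation). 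This case must be excluded separately; for instance, \(\widetilde D\) conjugates the deck transformation, which acts freely and properly discontinuously on \(\HH\), to the action of \(\gamma\) on \(U=\widetilde D(\HH)\), whereas an irrational rotation \(u\mapsto \ee^{2\pi\ii\theta}u\) preserves each circle \(|u|=r\) and has dense orbits there, so it cannot act properly discontinuously on any nonempty invariant open set avoiding its fixed points.

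Two further steps are asserted rather than proved, and they carry the real content of their respective cases. In the loxodromic case, the parenthetical claim that ``the two ends of \(V\) along the \(\tau\)-direction produce two distinct fixed boundary points'' is precisely the statement to be established, not an argument for it: \(V=\HH\) with \(\tau=1\) is a simply connected \(\tau\)-invariant proper domain whose two ends converge to the \emph{same} boundary point and whose quotient \emph{is} \(\Delta^{*}\), so any correct argument must use that \(V\) is disjoint from its \(2\pi\ii\ZZ\)-translates, which yours never does. A clean substitute: \(U/\langle\gamma\rangle\) embeds holomorphically in the torus \(T=\CC^{*}/\langle u\mapsto\lambda u\rangle\) with core curve representing the nontrivial homotopy class of \(\gamma\), so its modulus is bounded above by the reciprocal of the extremal length of that class on \(T\), which is finite; hence \(U/\langle\gamma\rangle\not\cong\Delta^{*}\). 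In the parabolic case, ``sends puncture to puncture'' needs proof: the injective map \(F(z)=z\ee^{2\pi\ii h(z)}\) extends meromorphically to \(0\), and you must rule out \(F(0)\in\CC^{*}\) as well as \(F(0)=\infty\) (in the latter case \(\ee^{2\pi\ii h}\to\infty\) and your boundedness argument collapses). Both are excluded at once by noting that \(h'\) has zero residue at \(0\) (being the derivative of a single-valued function), so the winding number of \(F\) around the origin along \(|z|=\epsilon\) equals \(1\), forcing \(F\) to extend with a simple zero at \(z=0\). With these three repairs the proof goes through.
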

The result is a projective analogue of Proposition~6 in \cite{guillot-rebelo}, to which we refer for a complete proof (see also \cite[Section~3.2.1]{DG}).

\begin{proof}[Sketch of proof] Let \(D:\Delta^*\to \PP^1\) be the (possibly multivalued) developing map of the projective structure.

If the monodromy is finite, of order \(\nu\), we may suppose that it is generated by \(w\mapsto \rho w\), for \(\rho\) a primitive \(\nu\)-th root of unity. Since the projective structure is uniformizable, \(D^\nu:\Delta^*\to \PP^1\) is a well-defined, one-to-one function. It extends holomorphically to \(0\), and may be supposed to map \(0\) to \(0\), giving a coordinate \(z\) on \(\Delta\) in which the developing map is given by \(z\mapsto z^{1/\nu}\). The projective structure is induced by the multivalued vector field \(\nu z^{1-1/\nu}\indel{z}\),   pull-back of \(\indel{w}\) under the developing map.  From \(\Xi(\nu z^{1-1/\nu}\indel{z})\equiv 0\) and (\ref{eq:leibniz}),
\(\Xi (\indel{z})=\frac{1}{2}(1-1/\nu^2)z^{-2}\): the singularity is Fuchsian, and has ramification index~\(\pm\nu\).
	
If the monodromy is parabolic, we may suppose that it is generated by \(w\mapsto w+2\ii\pi\). By the uniformizability hypothesis, \(\ee^D:\Delta^*\to \CC\setminus\{0\}\) is one-to-one, and thus extends holomorphically to \(0\), mapping \(0\) to \(0\). This gives a coordinate \(z\) in \(\Delta\) for which the  developing map is given by \(\log(z)\). The pull-back of the vector field \(\indel{w}\) is \(z\,\indel{z}\), which induces the projective structure. Since \(\Xi(z\,\indel{z})\equiv 0\), \(\Xi(\indel{z})=\frac{1}{2}z^{-2}\): the singularity is Fuchsian and has infinite ramification index.

The other potential monodromies are incompatible with the uniformizability condition. \end{proof}

\begin{lemma}\label{lemma:completecurve}  Let \(C\) be a \ (not necessarily compact) connected curve,  \(p\in C\), and let \(C\setminus\{p\}\) be endowed with a strongly uniformizable projective structure. Then, either 
\begin{itemize}
\item the projective structure extends to \(p\), and \(C\) is a compact rational curve, with its canonical projective structure (in particular, \(p\) is a Fuchsian singularity with ramification index \(\pm 1\)); or 
\item  \(p\) is a Fuchsian singularity with infinite ramification index. 
\end{itemize}
\end{lemma}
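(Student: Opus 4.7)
My plan is to analyze the conjugacy type of the monodromy around $p$. Let $\Gamma \subset \mathrm{PSL}(2,\CC)$ and simply connected $U \subseteq \PP^1$ realize the strong uniformization $C\setminus\{p\} \cong U/\Gamma$, with $\Gamma$ acting freely. Pick a small punctured disk $\Delta^* \subset C\setminus\{p\}$ around $p$; since uniformizability passes to open subsets, the induced projective structure on $\Delta^*$ is uniformizable, and the previous proposition applies: $p$ is a Fuchsian singularity with ramification index $\nu \in \ZZ \cup \{\infty\}$, and the element $T \in \Gamma$ associated to a small loop around $p$ is correspondingly trivial (when $\nu = \pm 1$), elliptic of finite order $|\nu|$ (when $2 \leq |\nu| < \infty$), or parabolic (when $\nu = \infty$)---other conjugacy classes of $T$ being incompatible with uniformizability of $\Delta^*$. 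It remains to rule out $|\nu|\geq 2$ and to deduce $C\cong\PP^1$ when $\nu = \pm 1$; the parabolic case yields the second conclusion of the lemma at once.

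To rule out $|\nu|\geq 2$, put $T$ in the normal form $T(z) = \ee^{2\pi\ii/|\nu|} z$; the fixed points $0,\infty$ lie outside $U$ by freeness. The $\langle T\rangle$-invariant connected component $V$ of the preimage of $\Delta^*$ in $U$ is, in these coordinates, a punctured disk around $0$ (or $\infty$) in $\PP^1$, as one sees by writing the quotient map in the $T$-invariant coordinate $\tilde z = z^{|\nu|}$. A small circle $\gamma \subset V$ around $0$ is null-homotopic in the simply connected $U$, so extends to a continuous $H:D^2 \to U$. The image $H(D^2)$ must cover one of the two Jordan disks $D_1, D_2 \subset \PP^1$ bounded by $\gamma$: otherwise, picking $q_i \in D_i\setminus H(D^2)$ on opposite sides of $\gamma$, $H$ would factor through the twice-punctured sphere $\PP^1\setminus\{q_1,q_2\}$, where $\gamma$ generates $\pi_1 \cong \ZZ$ and so cannot be null-homotopic. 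The disk containing $0$ cannot lie in $U$ (as $0 \notin U$), so the disk containing $\infty$ must, contradicting $\infty \notin U$.

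When $T = 1$, the developing map on $\Delta^*$ is single-valued and injective into $U$; by Great Picard (using injectivity to exclude an essential singularity in the case $U = \PP^1\setminus\{q\}$), it extends to a holomorphic $\tilde D : D_p \to \PP^1$. Injectivity forces $\tilde D'(p) \neq 0$, and $\tilde D(p)$ must lie in $\PP^1\setminus U$, for otherwise the covering $U \to C\setminus\{p\}$ would provide $p$ with a second image in $C\setminus\{p\}$. Hence $\tilde D(p)$ is an isolated point of $\PP^1\setminus U$, which is connected by Alexander duality, so $\PP^1\setminus U = \{\tilde D(p)\}$ and $U \cong \CC$. Then $\Gamma$ acts freely on $\CC$ while fixing $\infty$, hence is a discrete group of translations; the cases $\Gamma\cong\ZZ$ and $\Gamma\cong\ZZ^2$ are ruled out because the former would make $T$ nontrivial and the latter would make $C\setminus\{p\}$ compact. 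Thus $\Gamma = 1$, $C\setminus\{p\} = \CC$, and $C = \PP^1$ with its standard projective structure. The main obstacle is the degree argument in the elliptic case: it reduces to a Jordan-curve/winding-number claim, but one must carefully derive the contradiction from freeness of $\Gamma$ by enclosing both fixed points of $T$ in $U$; once this is set up, the remainder of the proof is bookkeeping.
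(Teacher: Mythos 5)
Your proof is correct, but it follows a genuinely different route from the paper's. The paper's argument is topological at the outset: since \(C\setminus\{p\}\) is an open surface, its fundamental group is torsion-free, so \(\pi_1(\Delta^*)\to\pi_1(C\setminus\{p\})\) is either trivial or injective; in the injective case the restriction to \(\Delta^*\) is \emph{strongly} uniformizable with infinite cyclic monodromy, and the preceding proposition at once forces the Fuchsian/infinite-index alternative, while in the trivial case a null-homotopic simple closed curve bounds a disk, so \(C\cong\PP^1\) and the structure extends. You instead work inside the uniformizing picture \(C\setminus\{p\}\cong U/\Gamma\): you classify the puncture monodromy \(T\) by its M\"obius conjugacy type and eliminate the finite elliptic case by a winding-number argument around the fixed points of \(T\), which lie outside the simply connected \(U\) by freeness --- in effect re-proving, in the case at hand, the torsion-freeness the paper quotes. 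Your treatment of the trivial case (extending the developing map across \(p\), locating \(\tilde D(p)\) as an isolated point of the connected set \(\PP^1\setminus U\), and deducing \(U\cong\CC\), \(\Gamma=1\)) is more explicit than the paper's appeal to surface topology and delivers \(C\cong\PP^1\) together with the extension of the structure in one stroke. One step is asserted more confidently than it is justified: that the \(\langle T\rangle\)-invariant component \(V\) is a punctured disk \emph{around \(0\) or \(\infty\)}, hence contains a round circle about \(0\), is true, but it requires the connectedness of \(V\) to force the puncture of the annulus \(V/\langle T\rangle\) to sit at a fixed point of \(T\); it is cleaner to concatenate the \(T\)-translates of a path in \(V\) from \(z_0\) to \(Tz_0\), producing a loop in \(U\) of winding number congruent to \(1\) modulo \(|\nu|\) about \(0\), which cannot be null-homotopic in \(\CC^*\supseteq U\). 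This is a repairable terseness, not a flaw in the approach.
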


\begin{proof} Let \(\Delta\) be a small disk around \(p\), \(i:(\Delta,p)\to (C,p)\) the inclusion. The map \(i_*:\pi_1(\Delta\setminus\{p\})\to \pi_1(C\setminus\{p\})\) is either trivial or injective, since,  for every closed path in \(C\setminus\{p\}\), we have that either it is homotopically trivial, or its class in the fundamental group has infinite order. In the first case, \(C\setminus\{p\}\) is simply connected, and \(C\) is a compact rational curve; since the developing map is defined on \(\Delta\setminus\{p\}\), the point \(p\) has ramification index \(\pm 1\): the singularity is an apparent one, and the projective structure extends to a neighborhood of \(p\).  In the second case, \(\Delta\setminus\{p\}\) injects its fundamental group into that of \(C\setminus\{p\}\), and the projective structure induced on \(\Delta\setminus\{p\}\) is strongly uniformizable; this implies that its monodromy is infinite, and, by the previous proposition, that it is Fuchsian with infinite ramification index. \end{proof}

The following proposition sheds light on the nature of the singularities that appear in the second item of this lemma (it will not be used further on).

\begin{prop} For a projective structure on \(\Delta^*\), the following are equivalent:
\begin{enumerate}
\item \label{sing:parab-fuch} \(0\) is a Fuchsian singular point with infinite ramification index;
\item \label{sing:parab-vf} the projective structure is induced by a vector field vanishing at \(0\), having non-zero linear part; and
\item \label{sing:parab-unif} a neighborhood of \(0\) in \(\Delta^*\) is isomorphic, for some \(N\), to the quotient of  \(\Omega_N=\{w\in\CC\mid \Re(w)<N\}\) under the action of \(w\mapsto w+2\ii\pi\), and, in particular, the projective structure is uniformizable.
\end{enumerate} 
\end{prop}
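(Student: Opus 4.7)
My plan is to establish the equivalence via the cyclic chain $(2)\Rightarrow(1)\Rightarrow(3)\Rightarrow(2)$, with the middle step being the delicate one.

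For $(2)\Rightarrow(1)$, I would argue by direct calculation. Suppose $Z=g(z)\del{z}$ induces the projective structure, with $g(z)=\lambda z+O(z^{2})$ and $\lambda\neq0$, so that $\Xi(Z)=0$. Writing $\del{z}=(1/g)\,Z$ and applying the Leibniz rule~\eqref{eq:leibniz} with $f=1/g$ yields
\[
\Xi\left(\del{z}\right)=-\frac{g''}{g}+\frac{1}{2}\left(\frac{g'}{g}\right)^{2}.
\]
A direct Taylor expansion then gives $z^{2}\Xi(\indel{z})=\tfrac{1}{2}+O(z)$, which by~\eqref{eq:def:ram_ind} is exactly the Fuchsian condition with ramification index $\nu=\infty$.

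For $(3)\Rightarrow(2)$, the translation-invariant vector field $\partial/\partial w$ on $\Omega_{N}$ is invariant under $w\mapsto w+2\pi\ii$ and thus descends to a holomorphic vector field on the quotient. Since $w$ is a projective coordinate there, $\Xi(\partial/\partial w)\equiv 0$, so the descended vector field induces the projective structure. Transporting it to the punctured disc $\{0<|z|<\ee^{N}\}$ via the biholomorphism $z=\ee^{w}$ produces the vector field $z\del{z}$, which extends across $0$ and vanishes there with nonzero linear part.

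The main implication is $(1)\Rightarrow(3)$. Here I would analyze the developing map $D$ directly near the singularity. Under the correspondence $D=u_{1}/u_{2}$, the Schwarzian equation associated with $\Xi$ becomes a linear second-order ODE with a regular singular point at $0$ whose indicial equation is $r(r-1)+\tfrac{1}{4}(1-\nu^{-2})=0$. For $\nu=\infty$ this degenerates to the double root $r=1/2$, and classical Frobenius theory yields two independent solutions of the form $u_{1}=z^{1/2}(1+O(z))$ and $u_{2}=u_{1}\log z+z^{3/2}\cdot(\text{holomorphic})$, so that $D=u_{2}/u_{1}=\log z+h(z)$ with $h$ holomorphic at $0$. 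The coordinate change $\zeta=z\,\ee^{h(z)}$ is a local biholomorphism giving $D=\log\zeta$; lifting a small punctured disc $\{0<|\zeta|<\ee^{N}\}$ to its universal cover via $\zeta=\ee^{w}$ then identifies it with $\Omega_{N}/\langle w\mapsto w+2\pi\ii\rangle$, as required. The main obstacle is the Frobenius analysis of the double-root case and, more subtly, the verification that the monodromy is precisely the parabolic translation $w\mapsto w+2\pi\ii$ (rather than a larger discrete subgroup); this should follow from the explicit logarithmic form of $u_{2}$ relative to $u_{1}$, which produces exactly the shift $D\mapsto D+2\pi\ii$ under analytic continuation around the puncture.
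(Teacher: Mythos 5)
Your proposal is correct, and the cyclic chain $(2)\Rightarrow(1)\Rightarrow(3)\Rightarrow(2)$ does establish the full equivalence; your $(3)\Rightarrow(2)$ and $(2)\Rightarrow(1)$ steps coincide with the paper's (the paper merely asserts $(2)\Rightarrow(1)$ "follows from~(\ref{eq:leibniz})", which is exactly the computation $\Xi(\indel{z})=-g''/g+\frac{1}{2}(g'/g)^2$ you carry out). Where you genuinely diverge is the main implication. The paper proves $(1)\Rightarrow(2)$ first: starting from $\Xi(z\indel{z})$ vanishing at $0$, it seeks a non-vanishing $f$ with $\Xi(fz\indel{z})\equiv 0$, which reduces via~(\ref{eq:leibniz}) to the Riccati-type equation $zh'+h+\frac{1}{2}zh^2+z^{-1}\Xi(z\indel{z})=0$ for $h=f'/f$, solved holomorphically by the Briot--Bouquet theorem; it then gets $(2)\Rightarrow(3)$ by explicitly integrating $zf(z)\indel{z}$ to $D=\log z+H(z)$. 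You instead go directly $(1)\Rightarrow(3)$ through the classical correspondence between the projective structure and the second-order linear ODE $u''+\frac{1}{2}\Xi(\indel{z})u=0$, where the Fuchsian hypothesis gives a regular singular point, infinite ramification index gives the double indicial root $r=1/2$, and Frobenius theory forces the logarithmic second solution, hence $D=\log z+h(z)$. Both routes are sound; the paper's stays entirely within the Christoffel-symbol formalism and produces the inducing vector field as an intermediate object, while yours trades the Briot--Bouquet input for Frobenius theory and reads off the developing map (and its parabolic monodromy $D\mapsto D+2\pi\ii$) directly. The only point you should make explicit is the normalization $\Xi(\indel{z})=\{D,z\}$ identifying the Christoffel symbol with the Schwarzian derivative of the developing map, which underlies your indicial equation; with the paper's convention~(\ref{eq:def:ram_ind}) this checks out, so the argument is complete.
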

\begin{proof} \underline{(\ref{sing:parab-fuch})\(\Rightarrow\)(\ref{sing:parab-vf})}. If \(0\) is a Fuchsian singular point with infinite ramification index, from (\ref{eq:ram_ind}), \(\Xi(z\,\indel{z})\) vanishes at \(0\). By (\ref{eq:leibniz}), for a function \(f\), the condition \(\Xi(fz\,\indel{z})= 0\) is equivalent, for \(h=f'/f\), to \(zh'+h+\frac{1}{2}zh^2+z^{-1}\Xi(z\,\indel{z})=0\). By the theorem of Briot and Bouquet \cite[\S~12.6]{ince}, this equation has a solution \(h\) which is holomorphic at \(0\), and there is thus a non-vanishing \(f\) such that \(\Xi(fz\,\indel{z})=0\). \underline{(\ref{sing:parab-vf})\(\Rightarrow\)(\ref{sing:parab-unif})}. Let the projective structure be given by the vector field \(fz\,\indel{z}\); up to multiplying by a constant, suppose that \(f(0)=1\). A developing map \(D\) for the projective structure (inverse of the solution of the differential equation given by the vector field) is given by \(z\mapsto \int^z  \dd s/(sf(s))\); if \(1/f(s)=1+sh(s)\),  \(D(z)=\log(z)+H(z)\), for a holomorphic function \(H\) with \(H'=h\), and \(\ee^D\) is one-to one. Its inverse gives the uniformizing map. \underline{(\ref{sing:parab-unif})\(\Rightarrow\)(\ref{sing:parab-vf})}. The map \(\ee^w:\Omega_N\to\Delta^*\) realizes the quotient by \(w\mapsto w+2\ii\pi\), and the image of the invariant vector field that induces the projective structure, \(\indel{w}\), is \(z\,\indel{z}\). \underline{(\ref{sing:parab-vf})\(\Rightarrow\)(\ref{sing:parab-fuch})}. It follows directly from equation~(\ref{eq:leibniz}). 
\end{proof}

\begin{rmk}\label{rmk:par}
If \(p\) is a non-degenerate singular point  of a foliation \(\mathcal{F}\), that is parabolic with respect to a foliated projective structure, and \(\gamma\) is a germ of curve  through \(p\), invariant by \(\mathcal{F}\), the restriction of the projective structure to \(\gamma\) is described by the previous proposition: it  is uniformizable, and has parabolic monodromy. 
\end{rmk}

\subsection{Foliations}  We will use leisurely  standard notions related to foliations by curves, like the \emph{tangent bundle} \(T_\mathcal{F}\) of a foliation \(\mathcal{F}\), or its dual, the \emph{canonical bundle} \(K_\mathcal{F}\); we refer the reader to the first chapters of \cite{brunella-birational} for a detailed presentation.  

A (singular, holomorphic) foliation by curves on a complex manifold \(M\) is given by a collection of pairs \(\{(U_i,Z_i)\}\), with \(U_i\subset M\)  an open subset, \(M=\cup_i U_i\), and \(Z_i\) a holomorphic vector field on \(U_i\), with singular set of codimension at least two, such that there is a non-vanishing function \(g_{ij}\) on \(U_i\cap U_j\), for which \(Z_i=g_{ij}Z_i\). The \emph{singular set} of \(\mathcal{F}\) is, on \(U_i\), given by the locus of zeros  of \(Z_i\). A singular point \(p\) of \(\mathcal{F}\) is \emph{non-degenerate} if \(\mathcal{F}\) is generated, on a neighborhood of \(p\), by a vector field vanishing at \(p\) having a  linear part   with no vanishing eigenvalues. In particular, non-degenerate singularities are isolated.

For a compact curve \(C\) invariant by a one-dimensional foliation \(\mathcal{F}\), we have the Poincar\'e-Hopf formula 
\begin{equation}\label{eq:ph_inv}
\mu(\mathcal{F}, C )=K_\mathcal{F}\cdot C+2-2g(C),
\end{equation}
in which \(g(C)\) is the geometric genus of \(C\), and \(\mu(\mathcal{F}, C )\) is defined as a sum of local contributions as follows: for each parametrized branch of \(C\) trough a point \(p\), and a vector field \(Z\) defining \(\mathcal{F}\) on a neighborhood of \(p\), the local contribution to \(\mu(\mathcal{F}, C )\) is the number of zeroes of the restriction of \(Z\) to the branch of \(C\) (see \cite[Prop.~2.2]{CCGDLF}; see also \cite[Prop.~2.8]{linsneto-soares} and \cite[Prop.~2.3]{brunella-birational}).

Foliated projective structures were the subject of our previous work \cite{DG}, to which we will often refer. If a foliation on a manifold is endowed with a foliated projective structure, any open subset inherits both a foliation and a foliated projective structure, which will be uniformizable if so is the original one. Uniformizability is thus  susceptible of being studied  locally (strong uniformizability is, in part, a global property).

Foliated translation structures are induced by vector fields with singular sets of codimension two, and their uniformizability is equivalent to the single-valuedness of the solutions of the vector field; an approach for the study of this property within the theory of holomorphic foliations has developed around the early work of Rebelo \cite{rebelo-singularites} (see \cite{guillot-survey} for a panoramic presentation). This has also motivated the study of the uniformizability of some foliated affine structures (see \cite[Section~4]{guillot-rebelo}, and the  discussion in \cite[Section~1.6]{guillot-survey}).

Some facts established for foliated affine and translation structures extend naturally to the projective case.

\begin{prop}\label{prop:unif_closed} Let \(\mathcal{F}\) be a foliation by curves on the manifold \(M\)  endowed with a foliated projective structure. The union of the leaves where the structure is not uniformizable is an open  subset of~\(M\).
\end{prop}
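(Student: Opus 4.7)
The plan is to apply the third characterization of uniformizability given in the proposition of Section~\ref{ssec:curves}: for every open path \(\alpha\) in a curve \(C\) and every lift \(\widetilde{\alpha}\) to \(\widetilde{C}\), the composition \(D\circ\widetilde{\alpha}\) must be open. This criterion is local along paths, and so well adapted to holomorphic variation in a transverse direction. Suppose \(p\in M\setminus\mathrm{Sing}(\mathcal{F})\) lies on a leaf \(L=L_p\) whose induced projective structure is not uniformizable. Then there exist an open path \(\alpha:[0,1]\to L\) and a lift \(\widetilde{\alpha}\) to the universal cover of \(L\) with \(D(\widetilde{\alpha}(0))=D(\widetilde{\alpha}(1))\). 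The idea is to transport this witness to nearby leaves and absorb the residual discrepancy by extending the path.

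First I would cover \(\alpha([0,1])\) by a finite chain of foliated charts and fix a transversal \(T\) through \(p\). Choosing the first and last charts of the chain disjoint (possible since \(\alpha(0)\neq\alpha(1)\) as points of~\(M\)), leafwise path-lifting produces, for every \(q\in T\) close enough to \(p\), a path \(\alpha_q:[0,1]\to L_q\) with \(\alpha_p=\alpha\), whose endpoints lie in these two disjoint open sets of~\(M\); in particular \(\alpha_q\) is still open. Analytically continuing the developing map along a lift \(\widetilde{\alpha_q}\) to \(\widetilde{L_q}\), holomorphy of the foliated projective structure makes \(D_q(\widetilde{\alpha_q}(0))\) and \(D_q(\widetilde{\alpha_q}(1))\) depend continuously on~\(q\), both close to the common value at \(q=p\).

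It remains to correct the generally nonzero difference \(D_q(\widetilde{\alpha_q}(1))-D_q(\widetilde{\alpha_q}(0))\). Since \(D_q\) is a local biholomorphism, it maps a small neighborhood of \(\widetilde{\alpha_q}(1)\) onto an open neighborhood of \(D_q(\widetilde{\alpha_q}(1))\) in \(\PP^1\), which, for \(q\) near \(p\), contains \(D_q(\widetilde{\alpha_q}(0))\). One can therefore choose a short path \(\epsilon_q\) in \(L_q\) starting at \(\alpha_q(1)\), depending continuously on~\(q\) and short enough to stay in the last chart, whose lift ends at a point with developed value exactly \(D_q(\widetilde{\alpha_q}(0))\). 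The concatenation \(\alpha_q\cdot\epsilon_q\) is still an open path (its terminal endpoint lies in the last chart, disjoint from the first, which contains \(\alpha_q(0)\)), and its developed image is closed, so by the same criterion the projective structure on \(L_q\) is not uniformizable.

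The main technical point is the coherent definition of the continued developing maps \(D_q\) along \(\widetilde{\alpha_q}\) as \(q\) varies, and the continuity of the evaluations at the endpoints. This is a standard construction for foliated geometric structures: one passes to a simply connected covering of a saturated tubular neighborhood of \(\alpha([0,1])\), on which the foliated developing map is holomorphic in both longitudinal and transverse variables, and restricts to \(D_q\) on each leaf.
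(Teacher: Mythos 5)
Your argument is correct and takes essentially the same approach as the paper: both transport the witnessing open path with closed development to nearby leaves through a foliated tubular neighborhood of \(\alpha([0,1])\) (the ``holonomy tube'' \(E\times\BB^n\) you describe in your last paragraph). The only cosmetic difference is that the paper normalizes the developing map on \(E\times\BB^n\) to be independent of the transverse parameter, so the transported path develops onto the very same closed path and no endpoint-correcting path \(\epsilon_q\) is needed.
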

\begin{proof} Suppose that \(M\) is of dimension \(n+1\). Let \(L\) be a leaf of \(\mathcal{F}\) in restriction to which the projective structure is not uniformizable. Let \(\alpha_0:[0,1]\to L\) be an open path that develops onto a closed one. The standard construction leading to the definition of the holonomy of a leaf of a foliation can be adapted to the foliated projective structure, to ensure the following: there exist a simply connected curve \(E\), a path \(\widetilde{\alpha}:[0,1]\to E\), an \(n\)-dimensional ball \(\BB^{n}\), and immersions \(D:E\to\PP^1\) and  \(i:E\times \BB^{n}\to M\) such that
\begin{enumerate}	
\item 	for every \(t\in \BB^{n}\), \(i|_{E\times\{t\}}\) is tangent to \(\mathcal{F}\);  \item the foliated projective structures on \(E\times \BB^{n}\)  induced, on the one hand,  by~\(i\), and, on the other, by \((z,t)\mapsto D(z)\), agree; \item \(\alpha(s)=i(\widetilde{\alpha}(s),0)\). 
\end{enumerate}
A development of  \(\alpha_t(s)=i(\widetilde{\alpha}(s),t)\) is  given by \(D(\widetilde{\alpha}(s))\). For \(t\) close enough to \(0\), \(\alpha_t\) is an open path developing into a closed one, and, thus, for the corresponding leaf of \(\mathcal{F}\), the projective structure is not uniformizable.
\end{proof}

\begin{prop}\label{prop:holmon} Let \(\mathcal{F}\) be a foliation by curves endowed with a uniformizable foliated projective structure. Let \(L\) be a leaf of \(\mathcal{F}\), and let \(\mathrm{hol}:\pi_1(L,p)\to \mathrm{Diff}(\CC^{n},0)\) and \(\mathrm{mon}:\pi_1(L,p)\to\mathrm{PSL}(2,\CC)\) be, respectively, the holonomy and monodromy representations. Then, \(\ker(\mathrm{mon})\subseteq\ker(\mathrm{hol})\).  \end{prop}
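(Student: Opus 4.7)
The strategy is to extend to a loop the tube-like construction underlying the proof of Proposition~\ref{prop:unif_closed}, and then to compare its two ends via the developing map on the cover of each nearby leaf associated with $\ker(\mon)$---a cover on which, by uniformizability, the developing map is injective. Concretely, represent a class $\gamma\in\ker(\mon)$ by a loop $\gamma:[0,1]\to L$ based at $p$, and build a simply connected curve $E$, a lift $\widetilde{\gamma}:[0,1]\to E$ of $\gamma$ (so that $\widetilde{\gamma}(0)$ and $\widetilde{\gamma}(1)$ are distinct points of $E$ both mapping to $p$), a ball $\BB^n$, and immersions $D:E\to\PP^1$ and $i:E\times\BB^n\to M$ with the same compatibility properties as in Proposition~\ref{prop:unif_closed}: each slice $i|_{E\times\{t\}}$ is tangent to $\mathcal{F}$ and lies in a single leaf $L_t$ (with $L_0=L$); the foliated projective structures on $E\times\BB^n$ induced by $i$ and by $(z,t)\mapsto D(z)$ coincide; and $i(\widetilde{\gamma}(s),0)=\gamma(s)$.

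Set $\iota_j(t):=i(\widetilde{\gamma}(j),t)$ for $j=0,1$. Both $\iota_j$ are local biholomorphisms of $(\BB^n,0)$ into $M$ parametrizing the same germ of transversal to $\mathcal{F}$ at $p$, and the holonomy $\hol(\gamma)$ is precisely the self-germ of this transversal sending $\iota_0(t)\mapsto\iota_1(t)$; it is thus enough to show $\iota_0(t)=\iota_1(t)$ in $M$ for every small $t$. The hypothesis $\gamma\in\ker(\mon)$ enters here: since $E$ is simply connected and maps to $L$ via $i(\cdot,0)$, $D$ realizes the developing map of the projective structure on $L$ along the lift $\widetilde{\gamma}$, and $\mon(\gamma)=\mathrm{id}$ translates into $D(\widetilde{\gamma}(0))=D(\widetilde{\gamma}(1))=:z^*$. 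For each small $t$, the path $\gamma_t(s):=i(\widetilde{\gamma}(s),t)$ lies in $L_t$ and, by the compatibility of the two foliated projective structures, develops (for the projective structure on $L_t$) to the closed loop $D\circ\widetilde{\gamma}$ in $\PP^1$. Lifting $\gamma_t$ to the cover $\widehat{L}_t\to L_t$ associated with $\ker(\mon_{L_t})$, and normalizing the developing map so that $\widehat{D}_{L_t}$ sends the initial endpoint of the lift to $z^*$, the terminal endpoint must also develop to $z^*$; uniformizability of the projective structure on $L_t$ makes $\widehat{D}_{L_t}:\widehat{L}_t\to\PP^1$ injective, so the two lifted endpoints coincide in $\widehat{L}_t$, and projection back to $L_t$ yields $\iota_0(t)=\iota_1(t)$.

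The principal subtlety lies in the initial step: one must carefully adapt the construction of Proposition~\ref{prop:unif_closed}---stated there for an open path $\alpha_0$---to a lift $\widetilde{\gamma}$ of a loop, so that $\widetilde{\gamma}(0)$ and $\widetilde{\gamma}(1)$ appear as two distinct points of $E$ related by the deck transformation corresponding to $\gamma$, and verify that, in this parametrized setup, the germ $\iota_1\circ\iota_0^{-1}$ genuinely encodes the holonomy around $\gamma$. Once this identification is secured, the remaining developing-map argument is essentially a parametrized version of the characterization of uniformizability by injectivity of the developing map on the $\ker(\mon)$-cover, recalled in the first proposition of Section~\ref{ssec:curves}.
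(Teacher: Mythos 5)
Your proof is correct and follows essentially the same route as the paper's: both rely on the tube construction from Proposition~\ref{prop:unif_closed} to see that every nearby path $\gamma_t$ develops onto the same closed loop $D\circ\widetilde{\gamma}$, and then invoke uniformizability to conclude that the $\gamma_t$ are themselves closed, which is exactly the triviality of the holonomy along $\gamma$. The only (immaterial) difference is that you use the injectivity of the developing map on the $\ker(\mon)$-cover, while the paper uses the equivalent characterization that an open path cannot develop onto a closed one.
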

\begin{proof}
Let \(\alpha_0:[0,1]\to L\) be a closed path of trivial monodromy, and consider, with it, the objects in the proof of Proposition~\ref{prop:unif_closed}. Consider the transversal \(T=i(\widetilde{\alpha}(0)\times \BB^{n})\). The path \(\alpha_0\) develops onto a closed path, and so do the paths \(\alpha_t\), and they must in consequence be closed, starting and ending at \(T\). This is exactly the triviality of the holonomy along~\(\alpha_0\). \end{proof}

Analogues of Proposition~\ref{prop:holmon} for foliated affine and translation structures appear in \cite[Prop.~2.7]{rebelo-singularites} and \cite[Fund. Lemma]{guillot-rebelo}. An affine version of Proposition~\ref{prop:unif_closed} can be found in \cite[Cor.~12]{guillot-rebelo}.

\section{Examples}\label{sec:examples}

Some regular foliations admit strongly uniformizable foliated projective structures. On compact surfaces uniformized by the bidisk, the horizontal and vertical foliations  are canonically endowed with them. The Inoue surfaces \(S_M\) have two foliations (one by disks, the other by entire curves), both of which have tautological strongly uniformizable foliated projective structures. The foliated projective structures obtained by the suspension of a strongly uniformizable projective structure on a curve is strongly uniformizable as well \cite[Ex.~2.16]{DG}. Global holomorphic vector fields with nontrivial singular set of codimension at least two on compact manifolds give foliations with singularities admitting strongly uniformizable foliated translation (hence projective) structures. More generally, from Brunella's works on the uniformization of foliations by curves, we have that a foliation by curves  on a compact K\"ahler manifold having only parabolic leaves (with Brunella's ad hoc definition of leaf), has a uniformizable foliated affine structure, with charts given by the ``universal covering tubes'' associated to the foliation; see \cite[Section~3.3]{brunella-ps}.

We are not aware of many examples of strongly uniformizable foliated projective structures that have nontrivial singular set and that are truly projective (that are neither given by global holomorphic vector fields nor reduce to foliated affine structures). We hereby present two families of such foliated projective structures.

\subsection{Hilbert modular foliations}

Let \(\HH=\{z\in \CC\mid \Im(z)>0\}\). The group \(\mathrm{PSL}(2,\RR)\times \mathrm{PSL}(2,\RR)\) acts diagonally on \(\HH^2\) by preserving both the horizontal and vertical foliations by disks, together with their canonical foliated projective structures. For a torsion-free, discrete subgroup \(\Gamma\subset \mathrm{PSL}(2,\RR)\times \mathrm{PSL}(2,\RR)\), the quotient manifold inherits two foliations   with a strongly uniformizable foliated projective structure each.

For a square-free integer \(d\), and \(\mathfrak{o}\) the ring of algebraic integers of \(\QQ(\sqrt{d})\), the \emph{Hilbert modular group} \(\mathrm{PSL}(\mathfrak{o})\) embeds as a non-uniform, irreducible lattice in \(\mathrm{PSL}(2,\RR)\times \mathrm{PSL}(2,\RR)\). The quotient \(\mathrm{PSL}(\mathfrak{o})\backslash\HH^2\), the \emph{Hilbert modular surface}, is an analytic space with finitely many singular points, corresponding to points in \(\HH^2\) with non-trivial stabilizer, and a finite number of ends (cusps). The one-point compactification of a cusp is a normal analytic space; in its resolution, the exceptional divisor is a cycle of smooth rational curves intersecting transversely, in which the self-intersections of the curves can be explicitly described \cite{hirzebruch-hilbert}. The vertical and horizontal foliations on \(\mathrm{PSL}(\mathfrak{o})\backslash\HH^2\), the \emph{Hilbert modular foliations}, extend to this compactification while leaving these rational curves invariant \cite[Ex.~9.4]{brunella-birational}.

Let us show that the foliated projective structure extends to the rational curves in the exceptional divisor of the desingularization of a cusp. On a neighborhood of a point in the intersection of two of the rational curves of the cycle, there are coordinates \(u\) and \(v\) (locally defining the two rational curves) and non-zero constants \(w\) and \(w'\), \(w\neq w'\), such that \(u\) and \(v\) are related to the global coordinates \((z_1,z_2)\) of \(\HH^2\) by
\[
\begin{split}
	2\ii\pi z_1 & =  w\log u+\log v, \\
	2\ii\pi z_2 & =  w'\log u+\log v
\end{split}
\]
(this is formula (9) in \cite[Section 2.3]{hirzebruch-hilbert}). The coordinate vector field \(\indel{z_1}\) induces one of the foliated projective structures, and, in the above coordinates, it  reads
\[\frac{2\ii\pi}{w-w'}\left(u\del{u}-w'v\del{v}\right).\]
Thus, the foliated projective structure induced by \(\indel{z_1}\) extends to the rational curves of the cycles, and, on a neighborhood of the intersection of two of them, is  induced by a non-degenerate vector field (a parabolic singularity of the foliated projective structure). Each rational curve of the cycle has two singular points, and the projective structure on their complement is the natural one on~\(\CC/2\ii\pi\ZZ\).

By considering a finite-index, torsion-free subgroup of \(\mathrm{PSL}(\mathfrak{o})\) we obtain a compact complex surface with a strongly uniformizable foliated projective structure with non-degenerate, parabolic  singularities.

\subsection{The complex geodesic foliation of a complex-hyperbolic manifold} \label{sec:geod-fol}
Let \(\Gamma\subset\mathrm{PU}(n,1)\) be a torsion-free  lattice. Its projective action on \(\PP^{n}\) preserves the complex-hyperbolic ball \(\BB^n\subset \PP^n\), on which it acts freely and properly discontinuously, having as quotient a complex-hyperbolic manifold \(M=\Gamma\backslash \BB^n\). The intersection of a line  of \(\PP^n\) with  \(\BB^n\)  is  a totally geodesic subspace of \(\BB^n\), a \emph{complex geodesic}. Let \(\Omega\) be the set of couples \((p,l)\), where \(p\in\BB^n\) and \(l\) is a line in \(\PP^n\) passing through \(p\). Consider the non-singular foliation by curves \(\mathcal{F}\) on \(\Omega\) whose leaves are given by the couples in \(\Omega\) having a common line. The leaf through \((p,l)\) identifies naturally to the disk \(l\cap\BB^n\), and \(\mathcal{F}\) has thus a natural foliated projective structure. The group \(\Gamma\) acts freely and properly discontinuously on \(\Omega\), preserving \(\mathcal{F}\) together with its foliated projective structure, producing  a manifold endowed with both a non-singular  foliation by curves \(\mathcal{F}\), the \emph{complex geodesic foliation}, and a strongly uniformizable foliated projective structure along it. The manifold \(\Gamma\backslash\Omega\) is naturally biholomorphic to \(\PP(TM)\), the projectivization of the tangent bundle of~\(M\).

In the case where \(\Gamma\) is not uniform, Mok has shown that \(M\) can be compactified via a \emph{toroidal Mumford compactification} into a smooth manifold \(\overline{M}\) by gluing an abelian variety of  dimension \(n-1\) to each one of its finitely many ends. On its turn, this produces the compactification \(\PP(T\overline{M})\) of \(\PP(TM)\). We will review this construction  following \cite[Section~2.1]{mok}; we will show that the above foliation \(\mathcal{F}\) extends to \(\PP(T\overline{M})\) as a foliation with singularities \(\overline{\mathcal{F}}\), that the foliated projective structure on \(\mathcal{F}\) extends to \(\overline{\mathcal{F}}\), and that this extension is  strongly uniformizable.

Consider the Siegel domain presentation of complex-hyperbolic space
\[S_n=\{z\in\CC^n\mid \Im(z_n)>|z_1|^2+\cdots+|z_{n-1}|^2\}.\]
Write \(z\in \CC^n\) as \((z';z_n)\) with \(z'=(z_1,\ldots,z_{n-1})\).  Let \(W\subset \mathrm{PU}(n,1)\) be the group of transformations of \(S_n\) of the form
\begin{equation}\label{eq:action_mok}
(z';z_n)\mapsto\left(z'+a';z_n+2\ii\sum_{j=1}^{n-1}\overline{a_j}z_j+\ii\|a'\|^2+t\right),
\end{equation}
with \((a',t)\in \CC^{n-1}\times \RR\). Through its action on \(\PP^n\) via the embedding \((z_1,\ldots,z_n)\mapsto [z_1:\cdots:z_n:1]\), these transformations fix the point \(b=[0:\cdots:0:1:0]\), belonging to the boundary of \(S_n\) (\(W\) is the unipotent radical of the stabilizer of \(b\)). The first derived subgroup \(W'\) of \(W\) is given by the elements of the form \((0,t)\).

All local models for the ends of finite-volume complex-hyperbolic manifolds are given by quotients of neighborhoods of \(b\) by lattices  of \(W\) acting as above. Let \(\Gamma_0\) be such a lattice. Let \(\tau\in\RR\) be such that the intersection of \(\Gamma_0\) with \(W'\) is generated by \((0,\tau)\). Consider the quotient under the action of the latter, realized by
\begin{equation}\label{eq:mumfordmap}
(w_1,\ldots,w_n)=(z_1,\ldots,z_{n-1},\ee^{\rho z_n}),
\end{equation}
for \(\rho=2\ii\pi/\tau\). In the target space, let \(\Pi\) be the hyperplane \(w_n=0\).
After quotient, the action (\ref{eq:action_mok}) with \(t \in\tau\ZZ\) is given by
\[(w';w_n)\mapsto \left(w'+a';\exp\left(2\ii\rho\sum_{j=1}^{n-1}\overline{a_j}w_j+\ii\rho\|a'\|^2\right)w_n\right).\]
The restriction of the action to \(\Pi\) is given by translations, and the quotient of \(\Pi\) under \(\Gamma_0/\Gamma_0'\) gives the abelian variety compactifying this end.

Let \(\zeta_i=\dd z_i\) (as a function on \(TS_n\)), so that \(z_1,\ldots,z_n,\zeta_1,\ldots,\zeta_n\) are coordinates for \(TS_n\). On \(\PP(TS_n)\), both \(\mathcal{F}\) and the foliated projective structure on it are induced by the homogeneous vector field \(X=\sum \zeta_i\,\indel{z_i}\) on  \(TS_n\). Let \(\xi_i=\dd w_i\), so that \(w_1,\ldots,w_n,\xi_1,\ldots,\xi_n\) are coordinates on \(T\CC^n\).  The map (\ref{eq:mumfordmap}) extends to the tangent spaces. In the complement of \(\Pi\), \(X\) passes to the quotient as the homogeneous vector field on \(T\CC^n\) 
\[Y=\sum_{i=1}^n\xi_i\del{w_i}+\frac{\xi_n^2}{w_n}\del{\xi_n}.\]
In order to obtain a vector field on a chart of \(\PP(T\CC^n)\), restrict \(Y\) to a nonzero level set of its first integral \(\xi_j\) (\(j<n\)). Let \(Y_j\) denote this restriction. In the chosen chart, the induced foliation is given by the holomorphic vector field \(w_nY_j\), and extends to \(\Pi\) in a way tangent to it. The extended foliation has a singular set of codimension two, given by \(\Pi\cap\{\xi_n=0\}\).

From \(\Xi(Y_j)\equiv 0\), it follows that \(\Xi(w_n Y_j)=\frac{1}{2}\xi_n^2\):
the foliated projective structure extends to \(\Pi\). By Proposition~\ref{prop:unif_closed}, this structure is uniformizable. Let us show that it is strongly so. The vector field on \(\Pi\) given by the restriction of \(w_nY_j\)  is \(\xi_n^2\,\indel{\xi_n}\), and from \(\Xi|_\Pi(\xi_n^2\,\indel{\xi_n})=\frac{1}{2}\xi_n^2\), we have that \(\Xi|_\Pi(\xi_n\,\indel{\xi_n})\equiv 0\): the foliated projective structure is induced by the complete vector field \(\xi_n\,\indel{\xi_n}\). The leaves within \(\Pi\) are, in consequence, uniformized by
\[t\mapsto ((c_1,\ldots,c_{n-1},0),[v_1:\cdots:v_{n-1}:\ee^t]),\]
and the foliated projective structure is strongly uniformizable.

For cocompact lattices, these same foliations show that the conclusion of Proposition~6.3 in \cite{DG} does not hold for odd-dimensional manifolds, not even in the presence of strongly uniformizable foliated projective structures (cf. Corollary~\ref{coro:unif-typegeneral}).
\begin{prop}\label{prop:oddgentype} 
Let \(\Gamma\subset\mathrm{PU}(n,1)\) be a torsion-free cocompact lattice, and let \(M=\Gamma\backslash \BB^n\). The complex geodesic foliation \(\mathcal{F}\) on \(\PP(TM)\) is of general type (in fact, its canonical bundle \(K_\mathcal{F}\) is ample). 
\end{prop} 
\begin{proof} 
The tangent bundle of the foliation, \( T_{\mathcal F}\), identifies with the tautological line bundle \( O_{TM}(- 1)\) over \( \mathbb P (T M)\). Since the complex-hyperbolic Hermitian metric on \(\BB^n\) has negative curvature, the line bundle \(T_{\mathcal F}\) is  negative (see~\cite[\S 6]{kobayashi-ochiai-positive}). This implies that its dual \(K_\mathcal{F}\) is a positive line bundle, and hence, by the Kodaira embedding theorem, an ample one (see \cite[\S3.6]{kobayashi-vectorbundles} for a discussion around positivity and ampleness). 
\end{proof} 
 
\section{Local normal forms} \label{sec:normal_forms}

\subsection{Foliated projective structures} For the proof of Theorem~\ref{thm:proj_lin}, we will use the \emph{geodesic vector field} constructed  in \cite[Section~5.2]{DG}. We begin by recalling  its definition along with some of its properties.   Let \(Z\) be a vector field on a neighborhood \(U\) of \(0\) in \(\CC^n\), \(\mathcal{F}\) the foliation it generates. Consider a foliated projective structure along \(\mathcal{F}\), and let \(\rho:U\to\CC\) be the projective Christoffel symbol of \(Z\).  On \(U\times \CC^2\),  consider the \emph{projective geodesic} vector field, given, in the coordinates \((z_1,\ldots,z_n,\xi,\zeta)\), by
\begin{equation*} 
 X=\zeta Z+\zeta\xi\del{\zeta}+\left(\textstyle\frac{1}{2}\xi^2-\rho \zeta^2\right)\del{\xi}.\end{equation*}
Under the projection  \(\pi:U\times \CC^2\to U\), \(\zeta^{-1}X\) gets mapped  to \(Z\).  The natural parametrizations of the leaves of \(X\) give, when projected via \(\pi\), projective parametrizations of the leaves of \(\mathcal{F}\) since, for the foliated projective structure induced by \(X\), by (\ref{eq:leibniz}), \(\Xi(\zeta^{-1}X)=\rho\).

With the vector fields \(H=\zeta\,\indel{\zeta}+\xi\,\indel{\xi}\) and \(Y=2\,\indel{\xi}\), \(X\) satisfies the \(\mathfrak{sl}(2,\CC)\) relations 
\[  [H,X]=X,\; [H,Y]=-Y,\; [Y,X]=2H,\]
corresponding to the identification of \(H\), \(X\) and \(Y\) with the left-invariant vector fields on \(\mathrm{SL}(2,\CC)\) associated to the elements 
\[\left(\begin{array}{rr} \frac{1}{2} & 0  \\ 0  & -\frac{1}{2} \end{array} \right),\;\left(\begin{array}{rr} 0  & 1 \\ 0  & 0 \end{array} \right) \text { and } \left(\begin{array}{rr} 0 & 0 \\ -1 &  0\end{array} \right)\]
of its Lie algebra.  A  consequence of the above relations, and of the simple forms that take the integrations of \(H\) and \(Y\), is that, if \(\phi(t)=(z(t),\zeta(t),\xi(t))\) is a solution of \(X\), and \(A=\left(\begin{array}{cc}a & b \\ c & d \end{array}\right)\in \mathrm{SL}(2,\CC)\),
\begin{equation*}
\phi_A(t)=\left(z\left(\frac{at+b}{ct+d}\right),\frac{1}{(ct+d)^2}\zeta\left(\frac{at+b}{ct+d}\right),\frac{1}{(ct+d)^2}\xi\left(\frac{at+b}{ct+d}\right)-\frac{2c}{ct+d}\right)
\end{equation*} 
is again a solution of \(X\), one which projects onto the same leaf of \(\mathcal{F}\), and which depends only on the class of \(A\) in \(\mathrm{PSL}(2,\CC)\) (this can be established by a direct calculation; see also \cite[Prop.~2]{guillot-ihes}). This formula admits the following interpretation: when going from one integral curve of \(X\) to another one lying over the same leaf of \(\mathcal{F}\) by gliding along the fibers of \(\pi\), the translation parameter changes projectively, and the above formula gives the specific way in which this occurs. In particular, for a solution \(\phi(t)\) of \(X\) defined on a neighborhood of \(t=0\), upon transforming it by \(P=\left(\begin{array}{cc}a & 0 \\ c & d \end{array}\right)\), we obtain a solution with initial condition
\[\left(z(0),\frac{1}{d^2}\zeta(0),\frac{1}{d^2}\xi(0)-\frac{2c}{d}\right),\]
and, in this way, we obtain all the solutions of \(X\) with initial conditions in the same fiber of \(\pi\) as \(\phi\). The following are easily seen to be equivalent:
\begin{itemize}
\item the solutions \(\phi\) and \(\phi_P\) agree;
\item the transformation between the translation coordinates induced by \(\phi\) and \(\phi_P\) is a translation;
\item the class of \(P\) in \(\mathrm{PSL}(2,\CC)\) is trivial.
\end{itemize}

We have the following result (compare with \cite[Prop.~4]{guillot-ihes}):  

\begin{prop} In the above setting, if the foliated projective structure of \(\mathcal{F}\) is uniformizable, then so is the foliated translation structure induced by~\(X\) on the foliation it defines.
\end{prop}

The conclusion is equivalent to the fact that the vector field \(X\) is \emph{semicomplete}, that its solutions are single-valued in their maximal definition domain (see \cite{rebelo-singularites}, \cite[Prop.~1.2.1]{guillot-survey}).

\begin{proof} We will establish the result via the criterion presented in Proposition~\ref{prop:sc_develop}. Let \(L\) be a leaf of \(X\), and \(\alpha:[0,1]\to L\) a path that, with respect to the translation structure on \(L\), develops onto a closed one. We need to prove that \(\alpha\) is a closed path. The restriction \(\pi|_L:L\to \pi(L)\) is a local biholomorphism; it is a projective map with respect to the projective structure on \(L\) induced by the translation structure coming from \(X\). If \(\pi\circ\alpha\) is an open path in the leaf \(\pi(L)\) of \(\mathcal{F}\), it  develops, with respect to the projective structure of \(\pi(L)\),  onto a closed path (for a development of \(\alpha\) with respect to the translation structure on \(L\) is a development of \(\pi\circ\alpha\) with respect to the projective structure on \(\pi(L)\)), but this contradicts the uniformizability of the foliated projective structure on \(\mathcal{F}\). Let us now address the case in which \(\pi\circ\alpha\) is a closed path, while \(\alpha\) is an open one. Let \(z_0\) and \(z_1\) be local translation coordinates on \(L\) centered, respectively, at \(\alpha(0)\) and \(\alpha(1)\).  The projection \(\pi|_L\) identifies neighborhoods  of  \(\alpha(0)\) and \(\alpha(1)\) within \(L\) by a non-trivial fractional linear transformation relating \(z_0\) and \(z_1\), mapping \(z_0=0\) to \(z_1=0\). In particular, for values of \(t\) close to \(0\) but different from it, this fractional lineal transformation maps \(z_0=t\) to a point different from \(z_1=t\). If a small translation of \(\alpha\) within \(L\) is a path starting at \(z_0=t\), it ends at \(z_1=t\). Thus, a small translation of \(\alpha\) within \(L\) is a path that develops onto a closed one, and that projects to an open path on \(\pi(L)\), reducing this case to the previous one. We conclude that there is no open path in \(L\) that develops into a closed one: the foliated translation structure of \(X\) is uniformizable. 
\end{proof}

\begin{proof}[Proof of Theorem~\ref{thm:proj_lin}] The result is a generalization of Theorem B in \cite{guillot-fourier}, from which we will borrow some arguments. The setting is the one described in the beginning of this section; we keep the notations there set. The core hypothesis is that \(0\) is a non-parabolic singularity of the projective structure, that \(\rho(0)\neq 0\).

Let \(c\in\CC\), and consider the vector field \(H+c X\) on \(U\times\CC^2\). Denoting by \(\Phi_Z^t\) the flow of the vector field \(Z\) in time \(t\), we have that
\begin{equation}\label{for:conj_sc}\Phi_{H+cX}^t=\Phi^{-c}_{X}\Phi^t_{H}\Phi^c_{X}=\Phi^{-c}_{X}\Phi^{c\ee^t}_{X}\Phi^t_{H}=\Phi^{c(\ee^t-1)}_{X}\Phi^t_{H},
\end{equation}
as  follows from the integral form of the Lie bracket relation \([H,X]=X\) and the previously discussed identification of \(H\) and \(X\) with particular left-invariant vector fields on \(\mathrm{SL}(2,\CC)\). Formula (\ref{for:conj_sc}) shows that \(H+c X\) is semicomplete: its flow in time \(t\) it is unambiguously defined whenever it is so, precisely because \(X\) is semicomplete (this is how the uniformizability hypothesis on the foliated projective structure  comes into play). Moreover, and this is crucial, the same formula shows that the solutions of \(H+c X\) are, like those of \(H\), \(2\ii\pi\)-periodic. 

Let \(\xi_0\) be a square root of \(-2\rho(0)\), which is different from zero by hypothesis.  Let \(p=(0,\ldots,0,1,\xi_0)\in U\times\CC^2\); it projects to the singular point of \(\mathcal{F}\) in \(U\). Let \(F=\pi^{-1}(0)\) be the fiber of \(\pi\) through \(p\). Consider the vector field \(A=H-\xi_0^{-1}X\) on \(U\times\CC^2\). It vanishes at \(p\), preserves \(F\), and
\[\mathrm{D}A|_p = \left(\begin{array}{ccc|cc} & & & 0 & 0 \\ & -\frac{1}{\xi_0}\mathrm{D}Z|_0 & & \vdots & \vdots \\  & & & 0 & 0 \\ \hline  0 & \cdots & 0 & 0 & -1/\xi_0   \\ * & \cdots & * & -\xi_0 & 0 \end{array}\right).\]
The eigenvalues of the latter are those of \(-\xi_0^{-1}\mathrm{D} Z|_0\), plus \(1\) and \(-1\), the last two corresponding to directions tangent to \(F\), with the last one tangent to the direction of~\(X(p)\).  
 
We claim that \emph{the eigenvalues of the linear part of \(A\) at \(p\) are integers}, and that \emph{there exist coordinates \((x_1,\ldots,x_{n},u,v)\), centered at \(p\), in which \(X=\indel{v}\), \(F\) is given by \(\cap_{i=1}^n\{x_i=0\}\), and  
\begin{equation}\label{for:prepaff}
A=\sum_{i=1}^{n}\lambda_ix_i\del{x_i}+u\del{u}-v\del{v},\end{equation}
with \(\lambda_i\in \ZZ\)}. We will follow the proof of Lemma 2.5 in \cite{guillot-fourier} to some extent, and repeat some of its arguments here. The vector field \(X\) is tangent to \(F\). Choose coordinates \((x_1,\ldots,x_{n+1},v)\), centered at \(p\), in which \(F\) is given by \(\cap_{i=1}^n \{x_{i}=0\}\), and  \(X\) by \(\indel{v}\). In these, in a neighborhood of \(p\),  the leaf space of \(X\)  is realized by  \(\nu(x_1,\ldots,x_{n+1},v)= (x_1,\ldots,x_{n+1})\), with \(\nu(F)\)   given by \(\cap_{i=1}^n  \{x_{i}=0\}\). Since \([A,X]=X\), the vector field \(A\) preserves the foliation induced by \(X\), and induces a vector field \(\nu_*A\) on its leaf space. Since the germ of \(A\) around \(p\) is semicomplete and \(2\ii\pi\)-periodic, so is the germ of \(\nu_*A\) in a neighborhood of \(\nu(p)\). The flow of \(\Im(\nu_*A)\), the imaginary part of \(\nu_*A\), induces an analytic  action of the compact group \(\RR/2\pi\ZZ\)  by biholomorphisms. This action is holomorphically linearizable: by the Bochner-Cartan theorem \cite[Ch.~V, Thm.~1]{montgomery-zippin},  the map 
\begin{equation}\label{bochner-cartan}\Psi(x)=\frac{1}{2\pi}\int_0^{2\pi} (\mathrm D\Phi_{\nu_*A}^{\ii\theta}|_p)\Phi_{\nu_*A}^{-\ii\theta}(x)\,\dd\theta\end{equation}
is a local holomorphic change of coordinates, fixing the point \(p\), tangent to the identity at it, that  maps \(\Im(\nu_*A)\)  to its linear part at \(\nu(p)\). This change of coordinates maps \(\nu_*A\) to a holomorphic vector field whose imaginary part is a linear vector field with \(2\pi\)-periodic solutions: a diagonalizable linear vector field with integral eigenvalues, necessarily. Since \(A\) is tangent to \(F\),  the line \(\nu(F)\) is preserved by \(\Phi^t_{\nu_*A}\) (and hence also by \(\mathrm{D}\Phi^t_{\nu_*A}|_p\)) 
for every \(t\) for which \(\Phi^t_{\nu_*A}\) is defined. From this, from the original choice of coordinates, and from the explicit form of~(\ref{bochner-cartan}), we have that \(\Psi_*(\nu_*A)\) preserves the line \(\nu(F)\). In the leaf space of \(X\), let \(L\) be a linear change of coordinates  preserving \(\nu(F)\) such that \((L\circ\Psi)_*(\nu_*A)=\sum \lambda_i x_i\,\indel{x_i}\) with \(\lambda_i\in \ZZ\). Observe that we must have \(\lambda_{n+1}=1\), for the eigenvalue of \(\nu_*A\) tangent to \(\nu(F)\) is \(1\). By extending these changes of coordinates to a neighborhood of \(p\) by \((x,v)\mapsto (L\circ \Psi(x),v)\), we obtain  coordinates \((x_1,\ldots,x_{n+1},v)\), centered at \(p\), in which \(X\) retains the expression \(\indel{v}\), and in which
\[A=\sum_{i=1}^{n+1} \lambda_i x_i\del{x_i}+f(x,v)\del{v},\]
for some function \(f\) vanishing at \(0\). The relation \([A,X]=X\) implies that \(f(x,v)=h(x)-v\) for some function \(h\). Let us show that, up to  a new change of coordinates, we may suppose that \(h\equiv 0\). Let \(h=\sum a_Ix^I\) (we use multi-index notation). The solution of \(A\) with initial condition \((c,v_0)\in\CC^{n+1}\times \CC\) (close to \(0\)) is given by \(x_i(t)=c_i\ee^{\lambda_i t}\), for \(i=1,\ldots,n+1\), and by
\[
\begin{split}
v(t) & =  \ee^{-t}\left(v_0+ \int_0^t \ee^s h(c_1\ee^{\lambda_1 s},\ldots,c_{n+1}\ee^{\lambda_{n+1}s} )\,\dd s\right) \\
	& =  \ee^{-t}\left(v_0+\int_0^t  \sum   a_Ic^I \ee^{(\langle\lambda,I\rangle+1) s}\,\dd s\right).
\end{split}
\]
Since the flow of \(A\) is  \(2\ii\pi\)-periodic, we must have that
\[\sum_{ \langle\lambda,I\rangle=-1}   a_Ic^I= 0\]
for all \(c\) sufficiently close to \(0\), and thus \(a_I=0\) if \(\langle\lambda,I\rangle=-1\). Let \(\overline{v}=v+g(x)\) for some holomorphic function \(g\) vanishing at \(0\), and consider coordinates \((x_1,\ldots, x_{n+1},\overline{v})\). Since \(H \overline{v}=1\), \(H\) has the same expression in the new coordinates. Let us show that \(g\) can be chosen so that  \(A  \overline{v}=-\overline{v}\). If \(g=\sum b_Ix^I\), this condition 
is formally equivalent to \(a_I+(\langle\lambda,I\rangle+1)b_I=0\) for all \(I\), and the previous observation ensures that there is a formal solution  
\[b_I=\begin{cases}
	0, & \text{if \(\langle\lambda,I\rangle= -1\)},\\
	-{\displaystyle \frac{a_I}{1+\langle\lambda,I\rangle}}, & \text{if \(\langle\lambda,I\rangle\neq -1\)}, 
\end{cases}\]
which is easily seen to be convergent. After relabeling \(x_{n+1}\) as \(u\), \(A\) is given by~(\ref{for:prepaff}), and \(F\) and \(X\) have the sought form. This establishes the claim.

Let \(k=-1/\xi_0\),  \(y_i=x_i(v+k)^{\lambda_i}\), \(w=u(v+k)\). In the coordinates \((y_1,\ldots,y_{n},w,v)\),   
\[X=\frac{1}{v+k}\left(\sum_{i=1}^n\lambda_iy_i\del{y_i}+w\del{w}+(v+k)\del{v}\right),\]
and \(H=-(v+k)\indel{v}\). Consider the codimension-one manifold \(\Sigma\) given by \(\{w=0\}\). It is transverse to \(\pi\), and is saturated by both \(H\) and \(X\). Let us restrict to it.  The fibers of  \(\pi|_\Sigma\)  are the integral curves of \(H\), and, in a neighborhood of \(p\) within \(\Sigma\), \(\pi|_\Sigma\)   is given by \((y_1,\ldots,y_n,0,v)\mapsto (y_1,\ldots,y_n)\). Under this projection,  \(\mathcal{F}\), together with its foliated projective structure, is realized by the image of \(X|_\Sigma\) under \(\pi|_\Sigma\). (Notice that, since the foliated projective structure is induced by the quotient of \(X\)  by \(H\), and \([X,H]=H\),  by restricting \(X\) to \(\Sigma\) we have, by the way, reduced the foliated projective structure to an affine one.) For the foliated projective structure generated by \(X\), from \(\Xi(X)\equiv 0\), by (\ref{eq:leibniz}), we have that \(\Xi((v+k)X)\equiv -1/2\). Since \([H,(v+k)X]=0\), the vector field \((v+k)X|_\Sigma\) may be projected to the leaf space of \(H|_\Sigma\) as a vector field. This projection reads simply \(\sum_{i=1}^n\lambda_iy_i\,\indel{y_i}\). \end{proof} 

With the normalization of Theorem~\ref{thm:proj_lin}, the \(\lambda_i\) are the ramification indices  of the projective structures along the coordinate axis at \(p\), as discussed in  Section~\ref{ssec:curves}. Up to a simultaneous  change of sign, they are the \emph{principal projective ramification indices} of the foliated projective structure  at \(p\), as defined in \cite[Section~3.2]{DG}. 

For  strongly uniformizable foliated projective structures,  this  observation implies, with Lemma~\ref{lemma:completecurve}, that \(\lambda_i=\pm 1\) for all \(i\); Theorem~\ref{thm:proj_lin} specializes as follows:

\begin{corollary}\label{coro:norforstrong}  Let \(\mathcal{F}\) be a foliation on a manifold \(M\) endowed with a strongly uniformizable foliated projective structure. Let \(p\in M\) be a non-degenerate singularity of \(\mathcal{F}\) which is not parabolic. Then, there exist coordinates around \(p\) in which \(\mathcal{F}\) is generated by the vector field \(\sum_{i\leq k}z_i\,\indel{z_i}-\sum_{i>k}z_i\,\indel{z_i}\), whose Christoffel symbol is constant and equal to~\(-1/2\).
\end{corollary}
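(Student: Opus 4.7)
The plan is to start from Theorem~\ref{thm:proj_lin} and upgrade the integer eigenvalues $\lambda_i\in\ZZ\setminus\{0\}$ produced there to $\lambda_i=\pm 1$ using the full strength of the strong (as opposed to mere) uniformizability hypothesis. Theorem~\ref{thm:proj_lin} provides coordinates $(z_1,\ldots,z_{n+1})$ centered at $p$ in which $\mathcal{F}$ is generated by $Z=\sum_{i=1}^{n+1}\lambda_i z_i\del{z_i}$ with $\lambda_i\in\ZZ\setminus\{0\}$ and with projective Christoffel symbol the constant $-1/2$; once $|\lambda_i|=1$ is established for each $i$, a permutation of the coordinates placing the positive eigenvalues first yields the stated normal form.

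The local step concerns each coordinate axis, which is $\mathcal{F}$-invariant by inspection of $Z$. On a punctured disk $\Delta^*$ around $p$ inside the $i$-th axis, the restricted foliation is induced by $\lambda_i z\del{z}$ with constant Christoffel symbol $-1/2$. Formula~(\ref{eq:ram_ind}) exhibits $p$ as a Fuchsian singularity of the induced projective structure on $\Delta^*$ with projective ramification index $\pm\lambda_i$. The proof sketch of the proposition preceding Lemma~\ref{lemma:completecurve} identifies the monodromy of this structure on $\Delta^*$ as cyclic of order $|\lambda_i|$: a developing map is $z\mapsto z^{1/\lambda_i}$ (up to a M\"obius transformation), whose monodromy around $p$ is multiplication by a primitive $|\lambda_i|$-th root of unity.

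The global step brings strong uniformizability to bear. The punctured disk $\Delta^*$ lies inside a single leaf $L$ of $\mathcal{F}$ (the punctured axis is connected, one-dimensional, and $\mathcal{F}$-invariant), so the monodromy of the restricted projective structure on $\Delta^*$ factors as $\pi_1(\Delta^*)\to\pi_1(L)\to\mathrm{PSL}(2,\CC)$. Strong uniformizability means $L\cong\Omega/\Gamma$ for some simply connected $\Omega\subset\PP^1$ and some $\Gamma\subset\mathrm{PSL}(2,\CC)$ acting freely and properly discontinuously on $\Omega$; in particular, $\Gamma\cong\pi_1(L)$ is a torsion-free subgroup of $\mathrm{PSL}(2,\CC)$. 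Consequently, the image of the cyclic group $\pi_1(\Delta^*)\cong\ZZ$ in $\Gamma$ is either trivial or of infinite order, which forces $|\lambda_i|\in\{1,\infty\}$; since $\lambda_i$ is a finite integer, $|\lambda_i|=1$.

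The one substantive point beyond bookkeeping is this identification of the local monodromy $\ZZ/|\lambda_i|\ZZ$ with a finite cyclic subgroup of the torsion-free group $\Gamma$; without strong uniformizability (which is precisely what supplies the torsion-freeness of $\Gamma$), the finite monodromies associated to $|\lambda_i|\geq 2$ would be compatible with uniformizability, and Theorem~\ref{thm:proj_lin} could not be sharpened.
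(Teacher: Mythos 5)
Your argument is correct and is essentially the paper's: both proofs start from Theorem~\ref{thm:proj_lin}, observe that the $\lambda_i$ are the ramification indices (equivalently, the orders of the local monodromies) of the projective structure along the invariant coordinate axes, and then use strong uniformizability of the leaf to exclude finite nontrivial local monodromy. The only cosmetic difference is that the paper invokes Lemma~\ref{lemma:completecurve} for this last step, whereas you re-derive its content inline via the torsion-freeness of the deck group $\Gamma\cong\pi_1(L)$ of the simply connected uniformization.
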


\subsection{Foliated affine structures}  A foliated  affine structure is equivalent to the data of a foliated connection \(\nabla\) which to each vector field  \(Z\) tangent to the foliation assigns its Christoffel symbol \(\Gamma(Z)\), defined by \(\nabla_Z  Z=\Gamma(Z) Z\). It satisfies the relation \(\Gamma(fZ)=f\Gamma(Z)+Zf\) \cite[Section~2.1.2]{DG}.  A singularity \(p\) of \(\mathcal{F}\) is said to be \emph{parabolic} if for every (or for one)   vector field \(Z\) with singular set of codimension at least two defining \(\mathcal{F}\) in a neighborhood of \(p\),  \(\Gamma(Z)|_p=0\).  

We have the following analogue of Theorem~\ref{thm:proj_lin} for foliated affine structures.

\begin{thm}\label{thm:aff_lin}  Let \(\mathcal{F}\) be a foliation on a manifold endowed with a uniformizable foliated affine structure, \(p\) be a non-parabolic singularity of \(\mathcal{F}\). There exist coordinates around \(p\) in which \(\mathcal{F}\) is generated by a linear diagonal vector field  with integral eigenvalues, whose affine Christoffel symbol is constant and equal to~\(1\).
\end{thm}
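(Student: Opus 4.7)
The plan is to follow the template of the proof of Theorem~\ref{thm:proj_lin}, replacing its $\mathfrak{sl}(2,\CC)$ geodesic geometry with an $\mathfrak{aff}(\CC)$ analogue. For a vector field $Z$ generating $\mathcal{F}$ near $0$ in $\CC^n$, with affine Christoffel symbol $\Gamma$, I would consider, on $U\times\CC$ with coordinate $\eta$ on the second factor, the \emph{affine geodesic vector field}
$$X=\eta Z-\eta^2\Gamma\del{\eta},$$
together with the grading field $H=\eta\indel{\eta}$. A direct computation gives $[H,X]=X$; the projection $\pi:(z,\eta)\mapsto z$ sends $\eta^{-1}X$ to $Z$, and the $X$-flow time yields affine parameters along the leaves of $\mathcal{F}$. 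The argument preceding the proof of Theorem~\ref{thm:proj_lin} adapts directly (and is simpler, since there is no longer a $Y$-generator to worry about) to show that uniformizability of the foliated affine structure on $\mathcal{F}$ is equivalent to semicompleteness of $X$.

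The non-parabolicity $\Gamma(0)\neq 0$ makes it possible to pick $c=1/(\eta_0\Gamma(0))$, for any $\eta_0\neq 0$, so that $A:=H+cX$ vanishes at $p=(0,\eta_0)$. The integral formula $\Phi_{H+cX}^t=\Phi_H^t\circ\Phi_X^{c(\ee^t-1)}$, which uses only the relation $[H,X]=X$, shows that $A$ is semicomplete and $2\ii\pi$-periodic. Since $X(p)=-\eta_0^2\Gamma(0)\indel{\eta}\neq 0$, flow-box coordinates $(x_1,\ldots,x_n,v)$ centered at $p$ can be chosen with $X=\indel{v}$ and $F=\pi^{-1}(0)=\{x=0\}$. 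The induced vector field $\nu_*A$ on the $X$-leaf space $\nu(x,v)=x$ is semicomplete and $2\ii\pi$-periodic, so the Bochner-Cartan linearization used in the proof of Theorem~\ref{thm:proj_lin} brings it to the form $\sum\lambda_ix_i\indel{x_i}$ with $\lambda_i\in\ZZ$. Lifting via $(x,v)\mapsto(\Psi(x),v)$ and using $[X,A]=-X$ to force the $v$-component of $A$ into the form $h(x)-v$, the argument eliminating $h$ that follows formula~(\ref{for:prepaff}) applies verbatim: the $2\ii\pi$-periodicity of the flow forces $a_I=0$ whenever $\langle\lambda,I\rangle=-1$ (with $h=\sum a_Ix^I$), and a convergent change $\bar v=v+g(x)$ yields
$$A=\sum\lambda_ix_i\del{x_i}-\bar v\del{\bar v},\quad X=\del{\bar v},\quad H=\sum\lambda_ix_i\del{x_i}-(\bar v+c)\del{\bar v}.$$

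To descend to $U$, I would identify $U$ with the leaf space of $H$, on which the functions $y_i:=x_i(\bar v+c)^{\lambda_i}$ are $H$-invariant and serve as coordinates. Along each leaf of $\mathcal{F}$, the parameter $\bar v$ is affine (it is the $X$-flow time), so the tangent $T:=\indel{\bar v}$, which in $y$-coordinates reads $(\bar v+c)^{-1}\sum\lambda_iy_i\indel{y_i}$, has $\Gamma(T)=0$. Setting $Z':=\sum\lambda_iy_i\indel{y_i}=(\bar v+c)T$, the Leibniz rule for the affine Christoffel symbol gives $\Gamma(Z')=(\bar v+c)\Gamma(T)+T(\bar v+c)=1$, which is the required normal form. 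The main obstacle, as I see it, lies in this last descent: one must verify that the $y_i$ descend to honest coordinates on $U$ through the identification of the $H$-leaf space of $U\times\CC$ with $U$, and that the multiplicative normalizations conspire to give the Christoffel symbol exactly $1$ rather than an arbitrary nonzero constant.
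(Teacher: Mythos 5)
Your proposal is correct and follows essentially the same route as the paper's first proof of this theorem: the paper introduces exactly the affine geodesic vector field \(X=\zeta Z-\Gamma(Z)\zeta^2\,\partial/\partial\zeta\) with companion \(H=\zeta\,\partial/\partial\zeta\) satisfying \([H,X]=X\), establishes semicompleteness of \(X\) from uniformizability, and then follows the proof of Theorem~\ref{thm:proj_lin}, which is precisely your plan (your final descent to the \(H\)-leaf space and the computation \(\Gamma((\bar v+c)X)=X(\bar v+c)=1\) do close the step you flag as the main obstacle). The paper also records a second, alternative route -- viewing the affine structure as a projective one, applying Theorem~\ref{thm:proj_lin}, and returning to the affine setting -- but your argument matches the first.
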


Let us indicate the lines of a first proof of this result. Let \(U\) be a neighborhood of~\(0\) in \(\CC^n\), \(Z\) a vector field on \(U\) having a singularity at \(0\), and \(\mathcal{F}\)  the foliation defined by \(Z\). Consider a uniformizable foliated affine structure on \(\mathcal{F}\). On \(U\times\CC\), consider the \emph{geodesic vector field} of the foliated affine structure,
\(X=\zeta Z-\Gamma(Z) \zeta^2\,\indel{\zeta}\), together with its  companion vector field \(H=\zeta\,\indel{\zeta}\), which is complete and \(2\ii\pi\)-periodic, and which satisfies, with \(X\), the Lie bracket relation \([H,X]=X\) (see \cite[Section~4.1]{DG} for details). After establishing that the uniformizability of the foliated affine structure implies that the geodesic vector field is semicomplete, the proof follows along the lines  of that of Theorem~\ref{thm:proj_lin}. 

Theorem~\ref{thm:aff_lin} can also be established as an application of Theorem~\ref{thm:proj_lin}. A foliated affine structure on \(\mathcal{F}\) may be seen as a foliated projective structure: if \(\Gamma\) is its affine Christoffel symbol, the projective Christoffel symbol \(\Xi\) for the induced foliated projective structure is given by \(\Xi(Z)=-\frac{1}{2}\Gamma(Z)^2+Z\Gamma\) \cite[Section~2.2]{DG}. In the opposite direction, for a foliated projective structure with Christoffel symbol \(\Xi\), if, for a vector field \(Z\), tangent to \(\mathcal{F}\), there exists a function \(\gamma:U\to\CC\) such that \(Z\gamma= \frac{1}{2}\gamma^2+\Xi(Z)\), the projective structure comes from an affine one: the foliated affine structure on \(U\)  given by the affine Christoffel symbol \(\Gamma(Z)=\gamma\) is in the projective class of \(\Xi\). A foliated affine structure is uniformizable if and only if it is uniformizable as a projective one, and a  singular point will be parabolic for a foliated affine structure if and only if it is parabolic when the structure is considered as a projective one. In order to prove Theorem~\ref{thm:aff_lin}, we may consider the foliated affine structure as a projective one, apply Theorem~\ref{thm:proj_lin}, and then come back to the foliated affine structure.

As an application of Theorem~\ref{thm:aff_lin}, we have the following result. It concerns semicomplete meromorphic vector fields whose divisors of zeros and poles have normal crossings and are tangent to the induced foliation, a central situation in the birational theory of semicomplete vector fields (compare with \cite[Prop.~5.4]{rebelo-nonisolees} and \cite[Props.~17 and~18]{guillot-rebelo}; see also \cite[Lemma~2]{delarosa-parameter}). 
\begin{prop}
Let  
\[X=\sum_{i=1}^k z_i h_i\del{z_i}+\sum_{i>k} g_i\del{z_i}\]
be a vector field on a neighborhood of \(0\) in \(\CC^n\), with \(h_i\) and \(g_i\) holomorphic for all \(i\), vanishing at \(0\), with singular set of codimension at least two.  If, for the integers \(p_1, \ldots, p_k\), the germ of \(Z=z_1^{p_1}\cdots z_k^{p_k}X\) at \(0\) is semicomplete (if there is a neighborhood of \(0\) in which, away from its divisor of poles, all of its solutions are univalent), and \(\sum_{i=1}^k p_ih_i(0)\neq 0\), there exist coordinates centered at \(0\) in which \[Z=z_1^{p_1}\cdots z_k^{p_k} \sum_{i=1}^n \lambda_iz_i \del{z_i},\]
for integers \(\lambda_1, \ldots, \lambda_n\) such that \(\sum_{i=1}^k \lambda_i p_i= -1\). 
\end{prop}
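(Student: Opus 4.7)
The plan is to deduce the proposition from Theorem~\ref{thm:aff_lin} by exhibiting a uniformizable foliated affine structure on $\mathcal{F}$ with respect to which the origin is non-parabolic. First, the semicompleteness of $Z$ on the complement of its pole divisor endows $\mathcal{F}$ with a foliated affine structure characterized by $\Gamma(Z) \equiv 0$ (by an argument analogous to the one given just before the proof of Theorem~\ref{thm:proj_lin}, adapted to the affine case). Writing $Z = fX$ with $f = z_1^{p_1}\cdots z_k^{p_k}$ and applying the Leibniz rule $\Gamma(fX) = f\Gamma(X) + X(f)$ yields
\[
\Gamma(X) \;=\; -X(f)/f \;=\; -\sum_{i=1}^{k} p_i h_i,
\]
a holomorphic function on a full neighborhood of $0$ whose value $-\sum_i p_i h_i(0)$ at the origin is nonzero by hypothesis. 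Hence the affine structure extends holomorphically across $\{f=0\}$, is uniformizable, and makes the origin a non-parabolic singularity of $\mathcal{F}$.

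I would then invoke Theorem~\ref{thm:aff_lin} to obtain coordinates $(y_1,\ldots,y_n)$ centered at $0$ in which $\mathcal{F}$ is generated by $W = \sum_i \lambda_i y_i \partial/\partial y_i$ with $\lambda_i \in \ZZ$ and $\Gamma(W) \equiv 1$. Writing $Z = g\,W$ for a meromorphic function $g$, the condition $\Gamma(Z) = 0$ combined with $\Gamma(W) = 1$ and the Leibniz rule gives $W(g) = -g$; equivalently, $g$ is a semi-invariant of weighted degree $-1$ for the $\CC^{*}$-action $t\cdot y = (t^{\lambda_1}y_1,\ldots,t^{\lambda_n}y_n)$ generated by $W$.

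The final step, which I expect to be the main obstacle, is to adjust the coordinates so that $g$ becomes a single monomial. Since $X$ and $W$ differ by a non-vanishing holomorphic factor, the divisor of $g$ coincides with the pole divisor $\sum_i p_i\{z_i=0\}$ of $Z$; each smooth branch $\{z_i=0\}$ is $\mathcal{F}$-invariant and hence $W$-invariant. Using the normal-crossings structure and transversality at $0$, I would argue that each branch must be tangent there to a coordinate hyperplane $\{y_{\sigma(i)}=0\}$ for some injection $\sigma$ (after relabeling, $\sigma=\mathrm{id}$); then, by a sequence of changes of coordinates in the centralizer of $W$, in the spirit of the resonance-removal step in the proof of Theorem~\ref{thm:proj_lin}, one straightens each $\{z_i=0\}$ to $\{\tilde z_i=0\}$ while preserving the linear form of $W$. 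In the resulting coordinates $g$ becomes a monomial $c\,\tilde z_1^{p_1}\cdots \tilde z_k^{p_k}$, the constant $c$ can be absorbed by a final rescaling, and the equation $W(g)=-g$ reads off the relation $\sum_{i=1}^{k}\lambda_i p_i = -1$. The hardest sub-step is precisely this simultaneous straightening when the $\lambda_i$'s carry resonances, so that the space of weight-$(-1)$ semi-invariants of $W$ is higher-dimensional; the transversality of the invariant branches and the uniqueness of formal invariant manifolds tangent to prescribed coordinate subspaces are the inputs that should make the argument go through.
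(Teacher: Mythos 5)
Your overall strategy coincides with the paper's: both arguments translate the semicompleteness of \(Z\) into the uniformizability of the foliated affine structure it induces (with \(\Gamma(Z)\equiv 0\)), compute \(\Gamma(X)=-f^{-1}Xf=-\sum_i p_ih_i\) to see that the origin is a non-parabolic singularity of that structure, and then invoke Theorem~\ref{thm:aff_lin}. The two proofs diverge only in the endgame, and this is exactly where your sketch is incomplete. The paper sidesteps your ``straightening'' problem altogether by observing that the linearizing change of coordinates (\ref{bochner-cartan}) from the proof of Theorem~\ref{thm:proj_lin} already preserves the invariant hyperplanes \(\{z_i=0\}\), \(i=1,\ldots,k\); hence in the linearizing coordinates one has directly \(Z=u\,z_1^{p_1}\cdots z_k^{p_k}Y\) with \(u\) a unit, and no a posteriori straightening is required. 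Your alternative route (write \(Z=gW\), derive \(W(g)=-g\), straighten the polar branches by changes of coordinates preserving \(W\)) can be made to work---invariance of a smooth hypersurface \(\{y_j=\phi\}\) under the diagonal \(W\) forces \(\phi\) to be a semi-invariant of weight \(\lambda_j\), so \(\tilde y_j=y_j-\phi\) preserves the linear diagonal form of \(W\), and the \(k\) branches, being transverse at \(0\), can be handled one at a time---but you have left precisely this step, which you correctly identify as the crux, unproved.

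There is a second, more concrete gap at the very end: after straightening you assert that \(g\) becomes \(c\,\tilde z_1^{p_1}\cdots\tilde z_k^{p_k}\) with \(c\) a \emph{constant} absorbable by rescaling. What you actually obtain is \(g=v\,\tilde z_1^{p_1}\cdots\tilde z_k^{p_k}\) with \(v\) a unit; the relation \(W(g)=-g\) then yields \(v^{-1}W(v)+\sum_i\lambda_ip_i=-1\), whence (evaluating at \(0\), where \(W(v)\) vanishes) \(\sum_i\lambda_ip_i=-1\) and \(W(v)\equiv 0\). But \(W(v)\equiv 0\) only says that \(v\) is a first integral of \(W\), and \(W\) may have non-constant first integrals whenever there are resonances \(\langle\lambda,I\rangle=0\); so \(v\) need not be constant and cannot simply be rescaled away. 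The fix is the paper's final move: set \(w_i=v^{-\lambda_i}\tilde z_i\). Since \(W(v)=0\), this change preserves \(W=\sum_i\lambda_iw_i\,\indel{w_i}\), and \(\prod_i w_i^{p_i}=v^{-\sum_i\lambda_ip_i}\prod_i\tilde z_i^{p_i}=v\prod_i\tilde z_i^{p_i}=g\), which gives the stated normal form. With these two points supplied, your proof closes and is essentially the paper's.
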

\begin{proof} The semicompleteness of \(Z\) is equivalent to the uniformizability of the  foliated affine structure that it induces \cite[Prop.~9]{guillot-rebelo} (we refer the reader to \cite{guillot-rebelo} for a general discussion about semicomplete meromorphic vector fields and their relation to foliated affine structures).  For the foliated affine structure induced by \(Z\), from \(\Gamma(Z)\equiv 0\)  and (\ref{eq:leibniz}), for \(f=z_1^{p_1}\cdots z_k^{p_k}\),
\[\Gamma(X)=-\frac{1}{f}Xf=-\sum_{j=1}^k p_ih_i.\]
The standing hypothesis \(\sum_{i=1}^k p_ih_i(0)\neq 0\) is thus equivalent to the fact that the foliated affine structure is not parabolic at the singular point.
By Theorem~\ref{thm:aff_lin} (and its proof via Theorem~\ref{thm:proj_lin}), there exist coordinates where 
\[Z=u(z_1,\ldots,z_n)z_1^{p_1}\cdots z_k^{p_k}Y,\]
for  \(u\) a non-vanishing holomorphic function, \(Y=\sum_{i=1}^n \lambda_iz_i\, \indel{z_i}\), with \(\lambda_i\in \ZZ\), and \(\Gamma(Y)\equiv 1\) (one needs to observe that, in this case, the change of coordinates~(\ref{bochner-cartan})  in the proof of Theorem~\ref{thm:proj_lin}  preserves the invariant hyperplanes coming from \(\{z_i=0\}\) for \(i=1, \ldots, k\)). From \(\Gamma(Z)\equiv 0\), we have that
\(u^{-1}Yu+\sum_{i=1}^k \lambda_i p_i+1=0\), and, since \((Yu)(0)=0\), it follows that  \(\sum_{i=1}^k \lambda_i p_i=-1\), and that, in consequence, \(Yu\equiv 0\). Let \(w_i=u^{-\lambda_i}z_i\) for \(i\leq k\), \(w_i=z_i\) for \(i>k\). Since
\begin{multline*}\frac{1}{w_i}Z w_i=\lambda_i u \left(\prod_{j=1}^k z_j^{p_j}\right)=\lambda_iu\left(\prod_{j=1}^k w_j^{p_j}u^{\lambda_j p_j}\right) = \\ = \lambda_i u^{1+\lambda_1 p_1+\cdots + \lambda_k p_k}\left(\prod_{j=1}^k w_j^{p_j}\right) 
=\lambda_i\left(\prod_{j=1}^k w_j^{p_j}\right),\end{multline*}
the vector field has, in the coordinates \((w_1,\ldots,w_n)\), the desired form.
\end{proof}

\section{Global consequences} 
Theorem~\ref{thm:rat1stint} will be proved in this section. We start with a semilocal version of it, which promotes Corollary~\ref{coro:norforstrong} to a description of  a saturated neighborhood of a non-degenerate, non-parabolic singular point of a strongly uniformizable foliated projective structure.

\begin{prop}\label{prop:local_strong} Let \(M\) be a manifold of dimension \(k+1\), \(\mathcal{F}\) a holomorphic foliation by curves on \(M\) endowed with a strongly uniformizable foliated projective structure, \(p\in M\) a non-degenerate singularity of \(\mathcal{F}\) which is non-parabolic for the foliated projective structure. Then, either
\begin{itemize}
\item \(M\) is biholomorphic to \(\PP^{k+1}\), with \(\mathcal{F}\) corresponding to the foliation by lines through \(p\); or
\item there exists an open saturated subset \(\Omega \subset M\) containing \(p\), a pointed analytic space \((Y,o)\) of dimension \(k\), and a holomorphic map \(f:(\Omega,p)\to(Y,o) \) such that \(f|_{\Omega\setminus f^{-1}(o)}:\Omega\setminus f^{-1}(o)\to  Y\setminus\{o\}\) is a proper onto map  whose fibers are rational curves invariant by \(\mathcal{F}\).
\end{itemize}
\end{prop}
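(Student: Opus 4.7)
The plan begins with the normal form provided by Corollary~\ref{coro:norforstrong}: around \(p\) one finds local coordinates \((z_1,\ldots,z_{n+1})\) in which \(\mathcal{F}\) is generated by the diagonal vector field \(Z=\sum_i\epsilon_iz_i\indel{z_i}\), with \(\epsilon_i\in\{\pm 1\}\), and constant projective Christoffel symbol \(\Xi\equiv-1/2\). A direct computation via the Leibniz rule~(\ref{eq:leibniz}) yields, for each index \(j\) with \(\epsilon_j=+1\), the identity \(\Xi(\indel{z_j})\equiv 0\) along the \(z_j\)-axis: the projective structure induced on each leaf coincides with the restriction of the standard structure on a projective line, and \(p\) appears as an apparent Fuchsian singular point of ramification index~\(\pm 1\).

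Next, I exclude mixed signatures. If \(0<k<n+1\), with \(k\) the number of positive \(\epsilon_i\), then a generic local leaf is a hyperbola biholomorphic to \(\CC^*\), carrying the standard projective structure of \(\PP^1\setminus\{0,\infty\}\); the latter is uniformizable by \(\CC^*\), which is not simply connected, contradicting strong uniformizability. Hence, up to replacing \(Z\) by \(-Z\), one may assume \(k=n+1\): \(\mathcal{F}\) is locally the foliation by complex lines through \(p\), and every leaf has \(p\) in its closure. For each such leaf \(L\), the curve \(L\cup\{p\}\subset M\)—smooth at \(p\) by the local model—carries a strongly uniformizable projective structure on \(L\) with a non-parabolic singularity at \(p\); by Lemma~\ref{lemma:completecurve} it extends to a compact rational curve \(\bar L\cong\PP^1\subset M\) containing \(p\) as a smooth point.

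Finally, I assemble the \(\bar L\) into an analytic family and produce \(f\). Blowing up \(p\), \(\pi:\widetilde M\to M\), with exceptional divisor \(E\cong\PP^n\), the lifted foliation \(\widetilde{\mathcal{F}}\) is non-singular and transverse to \(E\) in a neighborhood of it, and the proper transform of each \(\bar L\) meets \(E\) in a single point (the direction of \(L\) at \(p\)). Let \(\Omega\subset M\) be the union of all the curves \(\bar L\): it is open, \(\mathcal{F}\)-saturated, and contains~\(p\). The natural map \(\widetilde f:\widetilde\Omega\to E\cong\PP^n\) sending a point to its leaf, after contraction of \(E\) in the target to a base point \(o\), yields a holomorphic map \(f:\Omega\to Y\) whose non-special fibers are the rational curves \(\bar L\); properness away from \(o\) follows from the compactness of the fibers. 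If \(\Omega\ne M\), the second alternative holds. If \(\Omega=M\), then \(M\) is swept by a \(\PP^n\)-family of rational curves through \(p\) with \(K_\mathcal{F}\cdot\bar L=-1\) (by Poincar\'e--Hopf~(\ref{eq:ph_inv}), since \(\bar L\) meets the singular locus of \(\mathcal{F}\) only at \(p\)), and a structure theorem for projective space of Kobayashi--Ochiai type identifies \(M\) with \(\PP^{n+1}\) and \(\mathcal{F}\) with the foliation by lines through \(p\). The main obstacle is the rigorous construction of \(f\)—in particular the description of \(Y\) as an analytic space, given that the curves \(\bar L\) all pass through \(p\) and the target must therefore carry a non-trivial singular structure at \(o\) (essentially a cone structure coming from the contraction of \(E\))—together with the identification of \(M\) with \(\PP^{n+1}\) in the case \(\Omega=M\), which requires a supplementary compactness or algebraicity input.
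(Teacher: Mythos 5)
Your reduction to the normal form of Corollary~\ref{coro:norforstrong} and the computation of the ramification indices along the coordinate axes are fine, but the step where you ``exclude mixed signatures'' is wrong, and the error is fatal: the mixed case is not a contradiction, it is precisely the case that produces the second alternative of the proposition. Your argument is that a generic local leaf \(\{z_iz_j=c\}\) is a copy of \(\CC^*\) (an annulus in the chart) carrying the projective structure of \(\PP^1\setminus\{0,\infty\}\), which is not strongly uniformizable. But strong uniformizability is a hypothesis on the \emph{global} leaves, and it passes to an open subset \(C'\subseteq C\) only when \(\pi_1(C')\to\pi_1(C)\) is injective (see Section~\ref{ssec:curves}). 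The annulus \(L\cap B\) does not inject its fundamental group into that of the global leaf \(L\): the local stable and unstable manifolds saturate to compact invariant projective subspaces \(W^{\pm}\) (via Lemma~\ref{lemma:completecurve}), and the nearby global leaves are compact rational curves obtained by capping off \emph{both} boundary circles of that annulus with disks, so the core loop is nullhomotopic in \(L\) and no contradiction arises. The map \(f\) of the second alternative is then built in this mixed case from the explicit first integrals \(f_{ij}=z_iz_j\) (\(i\le m<j\)), which separate the orbits off \(W^+\cup W^-\); such first integrals do not exist in the case you kept, so by discarding the mixed case you have discarded the entire content of the second bullet.

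Conversely, in the all-positive case your appeal to a Kobayashi--Ochiai type theorem is both unnecessary and unavailable: \(M\) is only assumed to be a complex manifold, with no compactness, K\"ahler or algebraicity hypothesis, so the ``supplementary input'' you acknowledge needing cannot be supplied. The elementary argument is that each line through \(p\) in the local model is an \(\mathcal{F}\)-invariant curve whose projective structure has ramification index \(\pm1\) at \(p\); by Lemma~\ref{lemma:completecurve} it closes up into a compact rational curve meeting \(\mathrm{sing}(\mathcal{F})\) only at \(p\), and the union of these curves is a compact manifold biholomorphic to \(\PP^{n+1}\), open and closed in \(M\), hence equal to \(M\). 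In particular your dichotomy ``\(\Omega=M\) or \(\Omega\neq M\)'' inside the all-positive case can never produce the second alternative, so the architecture of your proof does not match the dichotomy in the statement.
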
	 
\begin{proof} Through Corollary~\ref{coro:norforstrong}, suppose that, in suitable coordinates centered at \(p\), the foliation is generated by the vector field \(Z=\sum_{i=1}^{m} z_i\,\indel{z_i}-\sum_{i=m+1}^{m+n} z_i\,\indel{z_i}\) (\(m+n=k+1\)). Suppose, without loss of generality,  that \(m>0\). Restrict to
\[B=\{z \mid |z_1|^2+\cdots+ |z_m|^2<\epsilon^2,\;  |z_{m+1}|^2+\cdots+ |z_{m+n}|^2 <\epsilon^2\}.\]
Let \(W^-_\mathrm{loc}=\cap_{i\leq m}\{z_i=0\}\), \(W^+_\mathrm{loc}=\cap_{i> m}\{z_i=0\}\). The restriction of \(Z\) to \(W^+_\mathrm{loc}\) is \(\sum_{i=1}^{m}  z_i\,\indel{z_i}\), and there is thus a local invariant curve for \(\mathcal{F}\) by \(p\) through each direction tangent to \(W^+_\mathrm{loc}\). By Lemma~\ref{lemma:completecurve}, each one of these curves extends to a rational curve in \(M\) meeting no other singular point of \(\mathcal{F}\). The saturated of \(W^+_\mathrm{loc}\) is thus a compact manifold \(W^+\subset M\), biholomorphic to \(\PP^m\), on which the restriction of \(\mathcal{F}\) is equivalent to the foliation by lines through a point. This proves our claim when \(n=0\). 

Let us henceforth suppose that \(n> 0\). There is a compact invariant manifold \(W^-\), biholomorphic to \(\PP^{n}\), extending \(W^{-}_\mathrm{loc}\). For \(z\in B\setminus (W^+_\mathrm{loc}\cup W^-_\mathrm{loc})\), consider the annulus
\[ A_z=\left\{t\in\CC\;\middle|\;  \frac{|z_{m+1}|^2+\cdots+ |z_{m+n}|^2}{\epsilon^2} <|t|^2<  \frac{\epsilon^2}{ |z_1|^2+\cdots+ |z_m|^2}  \right\},\]
which contains \(\{|t|=1\}\). Let \(L_z\) be the leaf of \(\mathcal{F}\)  through \(z\). The map \(f_z:A_z\to L_z\),
\[t\mapsto (z_1t,\ldots,z_mt,z_{m+1}t^{-1},\ldots,z_{m+n}t^{-1}),\] 
gives a one-to-one parametrization of \(L_z\cap B\), which is thus an annulus as well.  We have that \(\mathrm{D}f_z(t\,\indel{t})=Z|_{L_z}\), and, from \(\Xi(t\,\indel{t}) \equiv -1/2\) and formula (\ref{eq:leibniz}), \(\Xi(\indel{t}) \equiv 0\): the projective structure on the image of \(A_z\) is the one it has as a subset of \(\CC\).

The boundary  of \(B\) has two components: \(\partial^+B=\{\sum_{i\leq m} |z_i|^2=\epsilon^2\} \) and \(\partial^- B= \{\sum_{i>m} |z_i|^2=\epsilon^2\}\).
One of the  boundary components of \(L_z\cap B\) is contained in \(\partial^+B\), the other in \(\partial^-B\). Let \(L\subset W^+\) be a leaf of \(\mathcal{F}\). It intersects \(\partial^-B\) along a circle, and  has trivial monodromy. If \(L_z\) is a sufficiently close leaf that is not contained in \(W^+\), it intersects \(\partial^-B\) along a neighboring circle. Outside \(B\), \(L_z\) follows \(L\), and consists of gluing a disk to  the boundary component \(L_z\cap \partial^-B\) of the annulus \(L_z\cap \overline{B}\). The same happens close to \(W^-\). The leaves of \(F\) close to \(W^+\cup W^-\) are thus compact rational curves which do not   intersect the singular set of the foliation. Each one of them consists of an annulus (its trace in \(B\)) plus a disk glued to each one of its boundary components.

For \(i\leq m\) and \(j>m\), consider the function \(f_{ij}:B\to \CC\),  \(f_{ij}=z_iz_j\). Let \(f:B\to \CC^{nm}\) be the map defined by the \(f_{ij}\), \(Y\subset \CC^{nm}\) its image. Let us show that this first integral \(f:B\to Y\) of \(\mathcal{F}\) separates orbits that are not in \(W^+\cup W^-\). Let \(p\) and \(q\) be points in the complement of  \(W^+\cup W^-\) such that \(f(p)=f(q)\). If \(z_1(p)\) and \(z_1(q)\) are both nonzero, we may suppose, up to the action of \(Z\) on \(p\), that they are equal. Since \(f_{1i}(p)=f_{1i}(q)\) for every \(i>m\),  for every \(i> m\) we have that \(z_i(p)=z_i(p)\). Since at least one of these is not zero, the same argument shows that, for every \(j\leq m\), \(z_j(p)=z_j(q)\). By extending \(f\) to the saturated of \(B\), we obtain the result. \end{proof}

\begin{proof}[Proof of Theorem~\ref{thm:rat1stint}] Through Theorem~\ref{thm:inv_leavess} (Appendix~\ref{app:cp}),   Proposition~\ref{prop:local_strong} implies that for every regular point \(p\) of  \(\mathcal{F}\) there is a closed curve containing the leaf of  \(\mathcal{F}\) through \(p\); the result then follows directly from Theorem~\ref{thm:gm_kahler}.  \end{proof}

\begin{rmk}\label{rmk:fi_surfaces} In Theorem~\ref{thm:rat1stint}, the  K\"ahlerness of the ambient manifold is not necessary in dimension two: from Proposition~\ref{prop:local_strong}, \(\mathcal{F}\) has infinitely many compact leaves, and in this setting, since the foliation is of codimension one, a theorem of Jouanolou and Ghys \cite{ghys-jouanolou} guarantees the existence of a meromorphic first integral.\end{rmk}

A weak form of Theorem~\ref{thm:rat1stint}, not requiring the K\"ahler hypothesis, follows from a result in Barlet's preprint \cite{barlet:hal-01175721} (Proposition~\ref{prop:local_strong} gives the setting of Barlet's Corollary~1.0.2). It follows from it that, if we are not in the first possibility of Proposition~\ref{prop:local_strong}, the ``stability region,'' the open subset formed by the smooth compact rational leaves that do not intersect the singular locus of the foliation, is dense.

\section{The index theorem} 

Theorem~\ref{thm:index_parab} is an extension of Theorem~5.1 in \cite{DG}. From the foliated projective structure, we will consider a foliation by curves on a compact \((n+1)\)-dimensional manifold,  and will apply the Baum-Bott index theorem to it.  

\begin{proof}[Proof of Theorem~\ref{thm:index_parab}]  We begin by describing a \(\PP^1\)-bundle \(N\) over \(M\) through some explicit formulas. Let \(\{U_i\}\) be an open cover of \(M\) and \(Z_i\) a vector field on \(U_i\) tangent to \(\mathcal{F}\). Let \(\{g_{ij}\}\) be the associated cocycle, so that \(Z_i = g_{ij}Z_j\) in \(U_i\cap U_j\). In \(\{U_i\times \PP^1\}\), identify \(U_i\times \PP^1\) with \(U_j\times \PP^1\) via the natural identification of \(U_i\) and \(U_j\) along their intersection and, in the second factor, through \(u_i=g_{ij}u_j-Z_jg_{ij}\), where \(u_i\) and \(u_j\) are affine coordinates in the corresponding projective lines. The resulting compact manifold \(N\) is a \(\PP^1\)-bundle \(\pi:N\to M\).
	
We will now use the foliated projective structure to define a foliation by curves \(\mathcal{G}\) on \(N\), the projectivization of the projective geodesic vector field that already appeared in Section~\ref{sec:normal_forms}. Let  \(\rho_i:U_i\to\CC\) be the projective Christoffel symbol of \(Z_i\). Consider the foliation \(\mathcal{G}_i\) on \(U_i\times \PP^1\) given by the vector field
\[W_i=Z_i-\left({\textstyle\frac{1}{2}}u_i^2+\rho_i\right)\del{u_i}.\] These foliations glue together and define a global foliation \(\mathcal{G}\) on \(N\), which  projects to \(\mathcal{F}\) via~\(\pi\).

We will apply the Baum-Bott index theorem to \(\mathcal{G}\). It  establishes that the number resulting from the evaluation of \(\varphi\) on the Chern classes of the (virtual) normal bundle \(TN-T_\mathcal{G}\) of \(\mathcal{G}\) can be localized at the singular set of \(\mathcal{G}\), and computed through some residues \cite{baum-bott}.  From the proof of  Theorem~5.1 in \cite{DG}, \(\varphi(c(TN-T_\mathcal{G}))\) equals twice the expression appearing in the left hand side of (\ref{eq:index_qf}) plus
\begin{equation}\label{eq:index_lhs}2\widehat\varphi_n(c(TM-T_\mathcal{F})),\end{equation}
the term  that would correspond to \(k=0\) in~(\ref{eq:index_qf}).
	
Let us  calculate the local contributions of the singularities. Above each singular point of \(\mathcal{F}\), there is a unique singular point of \(\mathcal{G}\), which is degenerate. For \(\epsilon\) in a neighborhood of \(0\) in \(\CC\), consider the foliation on \(U_i\times \PP^1\) given  by \(W_i+\epsilon\, \indel{u_i}\), which gives the foliation \(\mathcal{G}_i\) when \(\epsilon=0\). The singularity of \(\mathcal{G}_i\) above \((p,0)\) splits, when \(\epsilon\neq 0\), into two non-degenerate ones. One of them has the eigenvalues \(\lambda_1,\ldots,\lambda_n\) of \(Z_i\) plus the eigenvalue \(\mu(\epsilon)=\sqrt{2(\epsilon-\rho_i)}\), vanishing with \(\epsilon\); the other, the eigenvalues \(\lambda_1,\ldots,\lambda_n\) and \(-\mu(\epsilon)\). The sum of the Baum-Bott residues with respect to \(\varphi\) of these two is
\[\frac{\varphi(\lambda_1,\ldots,\lambda_n,\mu)}{\lambda_1\cdots\lambda_n\mu}-\frac{\varphi(\lambda_1,\ldots,\lambda_n,-\mu)}{\lambda_1\cdots\lambda_n\mu} =\frac{2}{\lambda_1\cdots\lambda_n}\sum_{j \text{ odd}}\mu^{j-1}\widehat{\varphi}_{n+1-j}(\lambda_1,\ldots,\lambda_n).\]
According to \cite[Thm.~5.4]{bracci-suwa}, the residue of the singularity of \(\mathcal{G}_i\) above \(p\) is the limit of this expression as \(\epsilon\) goes to \(0\), which equals
\begin{equation}\label{eq:index_rhs} 2\frac{\widehat{\varphi}_{n}(\lambda_1,\ldots,\lambda_n)}{\lambda_1 \cdots \lambda_n}.\end{equation}
By the Baum-Bott index theorem applied to \(\mathcal{F}\) via the symmetric polynomial \(\widehat{\varphi}_n\), the expressions (\ref{eq:index_lhs})  and (\ref{eq:index_rhs}) are equal, and cancel each other. This establishes the result. \end{proof}

\begin{rmk}\label{stat:simple} For  even \(n\), for \(\varphi=\sum_{i=1}^{n+1}x_i^{n+1}\), formula (\ref{eq:index_qf}) reduces to \(c_1^n(T_\mathcal{F})=0\); For odd \(n\), for \(\varphi=\sum_{i=1}^{n+1}x_i^{n}\sum_{j\neq i} x_j\), it reads \(c_1^{n-1}(T_\mathcal{F})c_1(TM-T_\mathcal{F})=0\).
\end{rmk}

Theorem~\ref{thm:index_parab} imposes severe restrictions on foliations on manifolds of even dimensions.  We have the following extension of Proposition~6.3 in \cite{DG}:

\begin{corollary} Let \(M\) be a compact complex algebraic manifold of even dimension, \(\mathcal{F}\) a holomorphic foliation on \(M\) endowed with a  foliated projective structure. Suppose that the singularities of \(\mathcal{F}\) are non-degenerate  and non-dicritical, and that the foliated projective structure makes them all parabolic.  Then, \(\mathcal{F}\) is not of general type.
\end{corollary}
(A non-degenerate singularity of a foliation is \emph{dicritical} if the ratios of the eigenvalues of the linear part of a vector field tangent to it are all positive rationals; a foliation is said to be \emph{of general type} if the Iitaka dimension of its canonical bundle  is the dimension of the ambient manifold.)

\begin{proof}
If \(K_\mathcal{F}\) is not nef then, by a result by Bogomolov and McQuillan \cite[Cor.~4.2.1]{bogomolov-mcquillan}, either the foliation is a ruling by rational curves (in which case it is not of general type, for no positive power of its canonical bundle has non-zero sections), or there exists an invariant rational curve \(C\) in \(M\) such that \(K_\mathcal{F}\cdot C=-1\). In this case, (\ref{eq:ph_inv}) reads \(\mu(\mathcal{F}, C )=1\). This implies that \(C\) intersects the singular set of \(\mathcal{F}\) along a single branch passing through a singular point \(p\) of \(\mathcal{F}\), and that no other singular point of \(\mathcal{F}\) lies on \(C\). Since \(p\) is, by hypothesis, a parabolic singular point, the ramification index at \(p\) of the projective structure induced on \(C\) is \(\infty\), and the local monodromy is parabolic.  But this is impossible, for  \(C\setminus \{p\}\), along which the projective structure is regular, is simply connected. This  discards the case in which \(K_\mathcal{F}\) is not nef. If \(K_\mathcal{F}\) is nef then, as in the proof of Proposition~6.3 in \cite{DG}, from the asymptotic Riemann-Roch formula, the condition on \(c_1(K_\mathcal{F})\) arising from Remark~\ref{stat:simple} is incompatible with   \(K_\mathcal{F}\) being of general type. \end{proof}

\begin{proof} [Proof of Corollary~\ref{coro:unif-typegeneral}] If all the singular points of the foliation are parabolic, the result follows from the previous corollary. Otherwise,  there is a non-parabolic singular point, and, by Theorem~\ref{thm:rat1stint} and Proposition~\ref{prop:local_strong}, the foliation is a ruling by rational curves, with an open subset foliated by compact curves that do not intersect the singular locus of \(\mathcal{F}\), and \(\mathcal{F}\) is not of general type.
\end{proof}

\begin{rmk} In the proof of Proposition~6.3 in \cite{DG}, it should have been mentioned that, for a regular holomorphic foliation by curves on a compact manifold, either its canonical bundle is nef, or the foliation is a fibration by rational curves \cite[Cor.~4.2.2]{bogomolov-mcquillan}, and that in the second case, the foliation is not of general type. 
\end{rmk}

\section{Foliations on projective spaces} \label{sec:proj_space}

In order to prove  Theorem~\ref{thm:pn_nounif}, we begin by stating the following result,  which follows straightforwardly from Theorem~\ref{thm:proj_lin} and Lemma~\ref{lemma:completecurve}:
\begin{lemma}\label{lemma:ratcurve} Let \(M\) be a manifold, \(\mathcal{F}\) a holomorphic foliation by curves on \(M\) endowed with a strongly uniformizable projective structure, and \(p\in M\) a non-degenerate, non-parabolic singularity of \(\mathcal{F}\). Then, there exists a non-singular rational curve \(C\subset M\), invariant by \(\mathcal{F}\), with \(\mathrm{sing}(\mathcal{F})\cap C=\{p\}\).
\end{lemma}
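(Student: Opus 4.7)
The plan is to extend a local invariant analytic axis at $p$ to a global rational curve, by coupling the local normal form of Corollary~\ref{coro:norforstrong} with the compactification criterion of Lemma~\ref{lemma:completecurve}.

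By Corollary~\ref{coro:norforstrong}, I choose coordinates $(z_1,\dots,z_{n+1})$ centered at $p$ in which $\mathcal{F}$ is generated by $Z=\sum_{i\le k}z_i\partial_{z_i}-\sum_{i>k}z_i\partial_{z_i}$, with constant projective Christoffel symbol $-1/2$. A coordinate axis, say $\gamma_\mathrm{loc}=\{z_j=0:j\ne 1\}$, is then a local $\mathcal{F}$-invariant analytic disk on which $Z$ restricts to $\pm z_1\partial_{z_1}$; by the discussion following Theorem~\ref{thm:proj_lin}, the projective ramification index at $p$ along this axis is $\pm 1$. Let $L$ be the leaf of $\mathcal{F}$ (in the regular locus of $M$) containing $\gamma_\mathrm{loc}\setminus\{p\}$, endowed with its intrinsic Riemann surface structure, and form an abstract curve $C$ by adjoining a point $p'$ to $L$ along the end corresponding to the puncture of $\gamma_\mathrm{loc}$ at $p$; the leaf inclusion $L\to M$ extends to a holomorphic map $\iota:C\to M$ sending $p'$ to $p$.

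Applying Lemma~\ref{lemma:completecurve} to $(C,p')$ yields the key step: the projective structure on $C\setminus\{p'\}=L$ is strongly uniformizable by hypothesis, and has ramification index $\pm 1$ at $p'$, so we lie in the first alternative of the lemma, whence $C\cong\mathbb{P}^1$ with its standard projective structure.

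It remains to verify that $\iota$ is an embedding meeting $\mathrm{sing}(\mathcal{F})$ only at $p$. At $p'$ the map is locally the inclusion of a coordinate axis in the chart around $p$; away from $p'$ it factors through the leaf inclusion $L\hookrightarrow M$; hence $\iota$ is an immersion at every point. Injectivity follows because distinct points of $L$ are distinct in $M$, and $p\in\mathrm{sing}(\mathcal{F})$ lies in no leaf, forcing $\iota^{-1}(p)=\{p'\}$. Compactness of $C$ then promotes the injective immersion $\iota$ to an embedding whose image $\iota(C)=L\cup\{p\}$ is a non-singular rational curve in $M$, $\mathcal{F}$-invariant by construction; any further intersection of $\iota(C)$ with $\mathrm{sing}(\mathcal{F})$ would place a singular point of $\mathcal{F}$ on $L$, which is impossible. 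The most delicate step is the assembly of the abstract curve $C$ and the verification of the hypotheses of Lemma~\ref{lemma:completecurve} for it; once this is in place, compactness of $\mathbb{P}^1$ makes the passage from injective immersion to embedding automatic.
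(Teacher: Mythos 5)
Your proposal is correct and follows exactly the route the paper intends: the paper states that Lemma~\ref{lemma:ratcurve} ``follows straightforwardly from Theorem~\ref{thm:proj_lin} and Lemma~\ref{lemma:completecurve}'', and your argument is precisely that derivation, routed through Corollary~\ref{coro:norforstrong} (itself the combination of those two results) to get a coordinate axis with ramification index \(\pm 1\), and then through the first alternative of Lemma~\ref{lemma:completecurve} to compactify the corresponding leaf into an embedded rational curve meeting \(\mathrm{sing}(\mathcal{F})\) only at \(p\). The same construction appears in the proof of Proposition~\ref{prop:local_strong}, so your write-up supplies the details the paper leaves implicit without deviating from its method.
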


\begin{proof}[Proof of Theorem~\ref{thm:pn_nounif}] Let \(\mathcal{F}\) be a foliation of degree \(d\) on \(\PP^n\), so that \(K_\mathcal{F}=(d-1)h\), for \(h\) the hyperplane class in \(H^2(\PP^n,\ZZ)\). If \(p\) is  a non-degenerate, non-parabolic singularity of \(\mathcal{F}\) then, by Lemma~\ref{lemma:ratcurve},  there exists a smooth rational curve \(C\), invariant by \(\mathcal{F}\), intersecting   \(\mathrm{sing}(\mathcal{F})\) exclusively at \(p\). If \(\delta\) denotes its degree, formula (\ref{eq:ph_inv}) gives \((d-1)\delta=-1\), which implies that \(d=0\).  Thus, if \(d>0\), all the singularities of \(\mathcal{F}\) are   parabolic, and \(\mathcal{F}\) is subject to the conditions imposed by Theorem~\ref{thm:index_parab}. If \(n\) is even, by Remark~\ref{stat:simple},  \((1-d)^n=0\), and \(d=1\). If \(n\) is odd, from
\[c(T\PP^n-T_\mathcal{F})=\frac{c(T\PP^n)}{c(T_\mathcal{F})}=\frac{(1+h)^{n+1}}{1-(d-1)h}=1+(n+d)h+\cdots,\]
\(c_1(T\PP^n-T_\mathcal{F})=(n+d)h\), and Remark~\ref{stat:simple} gives \((1-d)^{n-1}(n+d)=0\); we also have \(d=1\) in this case.
\end{proof}

In Theorem~\ref{thm:pn_nounif}, the hypothesis on the degree is a necessary one. A foliation of vanishing degree on \(\PP^n\) is given by the lines through a point. Its leaves are biholomorphic to \(\CC\), and the canonical projective structure on each one of them gives  a strongly uniformizable foliated projective structure. Foliations of degree one on \(\PP^n\) are induced by globally defined holomorphic vector fields, which are complete, and which induce strongly uniformizable foliated translation (thus projective) structures. We do have foliations on \(\PP^2\) of degree two with non-degenerate singularities supporting uniformizable foliated affine (hence projective)  structures (which are not, of course, strongly uniformizable); see~\cite{guillot-sigma}.

The results in Theorem~\ref{thm:pn_nounif} coexist with an abundance of foliated projective structures along the involved foliations. For a foliation by curves \(\mathcal{\mathcal{F}}\), the space of foliated projective structures subordinate to it, if not empty, is an affine space directed by the vector space of quadratic differentials along the leaves of \(\mathcal{F}\) \cite[Rmk.~2.15]{DG}. For a foliation by curves \(\mathcal{F}\) of degree \(d\) on \(\PP^n\), this space is not empty~\cite[Ex.~2.4]{DG}, and, if \(d\geq 1\), \(K^2_{\mathcal{F}}=(T^2_{\mathcal{F}})^*=O(2(d-1))\). This space may be identified	with the space of homogeneous polynomials of degree \(2(d-1)\) in \(n+1\) variables, and thus \[h^0(\PP^n,K^2_{\mathcal{F}})=\binom{n+2d-2}{n}.\]
In particular, for \(n=2\), \(h^0(\PP^2,K^2_{\mathcal{F}})=d(2d-1)\), which, if \(d>2\), is strictly greater than  (and roughly twice as big as) \(d^2+d+1\), the number  of singularities of a generic foliation of degree \(d\) (at a non-degenerate singular point, parabolicity is, locally, a codimension-one condition, and one may be na\"ively tempted to think that, for a given foliation with non-degenerate singularities, and sufficiently many foliated projective structures, some of the latter make all  the singular points parabolic).

\appendix

\section{On the analytic subspaces invariant by  foliations on K\"ahler manifolds}\label{app:cp}

A holomorphic foliation of codimension one on a compact complex manifold with  infinitely many compact invariant hypersurfaces has a meromorphic first integral, and the union of the invariant hypersurfaces is  the whole manifold (this result is due Ghys \cite{ghys-jouanolou}; it generalizes previous work by Jouanolou). This is no longer true for foliations of higher codimension, as observed  in the closing remarks to \cite{ghys-jouanolou}.  In its place, we have, at least in the K\"ahler setting, the following result:

\begin{thm}[Pereira]\label{thm:inv_leavess} Let \(M\) be a compact K\"ahler manifold, \(\mathcal{F}\) a (not necessarily regular) holomorphic foliation on \(M\) of dimension \(d\). The union of the closed analytic subspaces of \(M\) of dimension \(d\) that are invariant by \(\mathcal{F}\) is the union of countably many closed analytic subspaces.\end{thm}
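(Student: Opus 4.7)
The plan is to exploit the theory of the Barlet (or Douady) space of compact analytic cycles, which becomes very well-behaved in the K\"ahler setting. Broadly, the idea is to produce countably many compact parameter spaces for the relevant cycles, show that \(\mathcal{F}\)-invariance cuts out a closed analytic subspace of each parameter space, and then push the universal family forward to \(M\) via Remmert's proper mapping theorem.

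First, I would invoke the Fujiki--Lieberman theorem: since \(M\) is compact K\"ahler, the Barlet space \(\mathcal{B}(M)\) of compact analytic cycles of \(M\) decomposes as a countable union of compact analytic spaces \(\{\mathcal{B}_\alpha\}_{\alpha\in\NN}\) (parametrizing, for example, cycles whose class lies in a fixed coset of \(H^{*}(M,\ZZ)\), equivalently cycles of bounded volume with respect to the K\"ahler form). Inside each \(\mathcal{B}_\alpha\), the locus of cycles of pure dimension \(d\) is a closed analytic subspace; let \(p_\alpha:\mathcal{Z}_\alpha\to\mathcal{B}_\alpha\) and \(q_\alpha:\mathcal{Z}_\alpha\to M\) denote the restriction of the universal family together with its two projections.

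The heart of the argument consists in showing that the subspace \(\mathcal{B}_\alpha^{\mathcal{F}}\subset\mathcal{B}_\alpha\) parametrizing \(\mathcal{F}\)-invariant cycles is closed analytic. Since \(\mathcal{F}\) and the cycles share the same dimension \(d\), invariance is equivalent to the requirement that, at every point where both \(Z\) and \(\mathcal{F}\) are smooth, the tangent space \(T_zZ\) coincides with \(T_z\mathcal{F}\). Locally on an open \(U\subset M\) where \(T_\mathcal{F}\) is generated by holomorphic vector fields \(X_1,\ldots,X_d\), this amounts to the vanishing of the images of the \(X_i|_{Z\cap U}\) in the normal sheaf \(\mathcal{N}_{Z/M}|_{Z\cap U}\). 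Viewed in families over \(\mathcal{B}_\alpha\), this is a vanishing condition on sections of a coherent sheaf on \(\mathcal{Z}_\alpha\) relative to \(p_\alpha\), and Grauert's semicontinuity and base-change results produce the desired closed analytic subspace structure on \(\mathcal{B}_\alpha^{\mathcal{F}}\). To conclude, \(\mathcal{B}_\alpha^{\mathcal{F}}\) is compact, so the restricted projection \(q_\alpha:p_\alpha^{-1}(\mathcal{B}_\alpha^{\mathcal{F}})\to M\) is proper, and Remmert's theorem gives a closed analytic subspace \(V_\alpha\subset M\). The countable union \(\bigcup_\alpha V_\alpha\) then visibly contains every \(d\)-dimensional invariant analytic subspace, as required.

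The main obstacle I expect is the intermediate step, the analyticity of \(\mathcal{B}_\alpha^{\mathcal{F}}\): one must deal simultaneously with the singular locus of the ambient foliation and the (possibly non-reduced, reducible) structure of the cycles parametrized by \(\mathcal{B}_\alpha\). A clean way to handle this, which I would try to carry out, is to express the invariance condition globally as the vanishing of the morphism of coherent \(\mathcal{O}_{\mathcal{Z}_\alpha}\)-modules \(p_\alpha^{*}(\text{something involving }T_\mathcal{F})\to \mathcal{N}_{\mathcal{Z}_\alpha/\mathcal{B}_\alpha\times M}\) induced by the inclusion, and then appeal to the fact that the locus where such a morphism of coherent sheaves vanishes (on fibers) is closed analytic. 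Once this analyticity is established, everything else is standard.
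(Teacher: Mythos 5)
Your overall architecture coincides with the paper's: use the K\"ahler hypothesis to obtain countably many \emph{compact} parameter spaces for compact \(d\)-dimensional subvarieties (the paper works with the Douady space, whose irreducible components are compact and of Fujiki class \(\mathcal{C}\) in the K\"ahler case, rather than the Barlet space), show that invariance cuts out a closed analytic subset of each component, and push the corresponding part of the universal family down to \(M\) by the proper mapping theorem. The gap is precisely the middle step, which you yourself flag as ``the main obstacle'' and only sketch; as written, the sketch does not go through. The universal family over the Barlet space is not flat, so Grauert semicontinuity and base change are not available there (one must pass to the Douady space for that). The characterization of invariance by equality of tangent spaces at points where both \(Z\) and \(\mathcal{F}\) are smooth is vacuous on components of \(Z\) contained in \(\mathrm{sing}(\mathcal{F})\) and does not obviously agree with the ideal-theoretic condition \(\mathcal{F}\cdot\mathcal{J}_Z\subset\mathcal{J}_Z\) for non-reduced or reducible members. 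And even with the sheaf-morphism formulation, ``vanishes on the fiber'' only detects that \emph{some} irreducible component of the fiber is invariant, so converting it into a closed analytic condition on the base still requires an argument for reducible fibers.

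The paper resolves exactly this point by a more elementary device worth comparing with yours. On \(M\times T\) one takes the foliation \(\widehat{\mathcal{F}}\) tangent to the fibers of \(\pi:S\to T\) and equal to \(\mathcal{F}\) along them, and iterates: \(\mathcal{J}^0_S=\mathcal{J}_S\), and \(\mathcal{J}^{i+1}_S\) is generated by \(\mathcal{J}^i_S\) together with its derivatives along \(\widehat{\mathcal{F}}\). By local Noetherianity and compactness the chain stabilizes, and the stable ideal defines the largest \(\widehat{\mathcal{F}}\)-invariant subspace \(S^{\mathcal{F}}\subseteq S\); a member \(X\times\{t_0\}\subset S\) is invariant if and only if it lies in \(S^{\mathcal{F}}\). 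The locus where the fibers of \(S^{\mathcal{F}}\to T\) have dimension \(d\) is closed analytic by upper semicontinuity of fiber dimension, and its images in \(T\) and in \(M\) are analytic by the proper mapping theorem. This handles non-reduced structures, reducible fibers and the singular locus of \(\mathcal{F}\) uniformly, with no flatness or base-change input. To complete your argument along your own lines you would need either to import this construction or to supply a genuine proof of the analyticity of \(\mathcal{B}^{\mathcal{F}}_\alpha\) that copes with these degenerate cases.
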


This result has a natural counterpart in the algebraic setting, which is implicit in the proof of Theorem~3 in \cite{gomezmont-integrals}, and of which Proposition~2.1 in \cite{coutinho-pereira} is a relative version. The proof we present is due to Jorge Pereira, and some of its arguments are not far from those in \cite{gomezmont-integrals}. The ideas behind it will allow us to establish a K\"ahlerian version of a result by G\'omez-Mont \cite[Thm.~3]{gomezmont-integrals}, the final remark in \cite{gomezmont-integrals} notwithstanding:
\begin{thm}[Pereira]\label{thm:gm_kahler} Let \(\mathcal{F}\) be a (not necessarily regular) holomorphic foliation of codimension \(q\) in the compact complex K\"ahler manifold \(M\), and assume that through every regular point of \(\mathcal{F}\) there is an invariant compact analytic space of codimension \(q\) invariant by \(\mathcal{F}\). Then, there exists an analytic space \(V\), of dimension \(q\), belonging to Fujiki's class \(\mathcal{C}\), and a dominant meromorphic map \(f : M \dashrightarrow V\) such  that the leaves of \(\mathcal{F}\) are contained in the fibers of~\(f\).  
\end{thm}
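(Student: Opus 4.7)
The strategy is to organize the invariant cycles into a Barlet parameter space and to realize \(f\) as a cycle-to-point correspondence. Set \(d=\dim M-q\), and let \(\mathcal{B}(M)\) be the Barlet space of compact analytic \(d\)-cycles of \(M\); let \(\mathcal{B}^{\mathcal{F}}\subset\mathcal{B}(M)\) be the closed analytic subset parameterizing cycles whose support is \(\mathcal{F}\)-invariant, carrying the universal cycle \(\mathcal{U}\to\mathcal{B}^{\mathcal{F}}\) and its evaluation map \(\mathrm{ev}:\mathcal{U}\to M\). The hypothesis of the theorem says precisely that \(\mathrm{ev}\) surjects onto the regular locus of \(\mathcal{F}\), hence onto \(M\) by closure. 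Since \(M\) is compact K\"ahler, the Fujiki-Lieberman theorem ensures that every irreducible component of \(\mathcal{B}(M)\) is compact and lies in Fujiki's class \(\mathcal{C}\); the invariance condition cuts out \(\mathcal{B}^{\mathcal{F}}\) as a closed analytic subset, preserving these properties for its components.

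By Theorem \ref{thm:inv_leavess}, \(\mathcal{B}^{\mathcal{F}}\) has only countably many irreducible components, whose images cover~\(M\); since \(M\) is irreducible, at least one component \(\mathcal{V}\) has \(\mathrm{ev}_{\mathcal{V}}:\mathcal{U}_{\mathcal{V}}\to M\) surjective. I would choose such a \(\mathcal{V}\) of minimal dimension. Surjectivity together with \(\dim\mathcal{U}_{\mathcal{V}}=\dim\mathcal{V}+d\) and \(\dim M=d+q\) forces \(\dim\mathcal{V}\geq q\). Once the equality \(\dim\mathcal{V}=q\) is established, \(\mathrm{ev}_{\mathcal{V}}\) is generically finite, and passing to the Stein factorization of \(\mathrm{ev}_{\mathcal{V}}\) (equivalently, to a suitable meromorphic quotient of \(\mathcal{V}\)), the a priori multivalued assignment \(p\mapsto\{C\in\mathcal{V}:p\in C\}\) becomes a single-valued meromorphic dominant map \(f:M\dashrightarrow V\), with \(V\) in class \(\mathcal{C}\) of dimension \(q\). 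By construction, each fiber of \(f\) is the support of the corresponding cycle, and is therefore an invariant analytic subspace; since any leaf of \(\mathcal{F}\) that meets a closed invariant analytic subset is contained in it, every leaf of \(\mathcal{F}\) is contained in a fiber of \(f\), as required.

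The heart of the argument, which I expect to be the main obstacle, is ruling out \(\dim\mathcal{V}>q\). If this held, the fiber of \(\mathrm{ev}_{\mathcal{V}}\) over a generic \(p\in M\) would be positive-dimensional, producing a positive-dimensional subfamily \(\mathcal{V}_p\subset\mathcal{V}\) of invariant cycles through~\(p\). One then needs to extract, from intersections (or unions) of members of these families, a new component of invariant cycles of codimension~\(q\) whose parameter space has strictly smaller dimension than \(\mathcal{V}\) but still evaluates surjectively onto \(M\), contradicting the minimality of \(\dim\mathcal{V}\). This is the K\"ahler counterpart of G\'omez-Mont's Chow-reduction argument in the projective case, and its control rests on the stability of class \(\mathcal{C}\) under meromorphic images, proper modifications, and finite covers (Fujiki-Varouchas), applied to successive components of relative Barlet spaces associated with the family \(\mathcal{V}\).
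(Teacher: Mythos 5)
Your setup (Barlet/Douady space, compactness and class $\mathcal{C}$ of its components over a K\"ahler base, the invariance locus being closed analytic, and the Baire-type selection of a component $\mathcal{V}$ whose universal family evaluates onto $M$) matches the paper's. But the step you yourself identify as ``the heart of the argument'' --- ruling out $\dim\mathcal{V}>q$ --- is left unproven, and the route you sketch for it (a descending Chow-type reduction extracting smaller families from intersections of the positive-dimensional subfamilies $\mathcal{V}_p$) is not carried out and is in fact not needed. The missing idea is elementary: an irreducible $d$-dimensional analytic subspace invariant by $\mathcal{F}$ and passing through a \emph{regular} point $p$ of $\mathcal{F}$ must contain the leaf through $p$, hence must be the closure of that (unique) leaf; its support is therefore determined by $p$. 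Consequently the evaluation map $\mathrm{ev}_{\mathcal{V}}:\mathcal{U}_{\mathcal{V}}\to M$ is injective off a proper closed analytic subset, hence bimeromorphic. This gives $\dim\mathcal{U}_{\mathcal{V}}=\dim M$, so $\dim\mathcal{V}=q$ immediately, with no minimality argument; and it gives, at the same time, the single-valuedness of $p\mapsto\{C\in\mathcal{V}: p\in C\}$. This is exactly how the paper (following G\'omez-Mont) concludes: $f=\pi\circ\mathrm{ev}_{\mathcal{V}}^{-1}$ with $V=\mathcal{V}$ itself, no Stein factorization or quotient of $\mathcal{V}$ required.

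A second, related weak point: even granting $\dim\mathcal{V}=q$, generic \emph{finiteness} of $\mathrm{ev}_{\mathcal{V}}$ does not by itself produce a single-valued meromorphic map $M\dashrightarrow V$. The Stein factorization of $\mathrm{ev}_{\mathcal{V}}$ factors through a generically finite cover \emph{of $M$}, not a map out of $M$; turning a $k$-valued correspondence ($k>1$) into a map would require passing to a symmetric product or a genuine quotient of $\mathcal{V}$, which you do not construct. Again, the uniqueness-of-the-leaf observation dissolves the issue by forcing $k=1$. So the proposal is repairable, but as written it is missing the one substantive idea that makes the proof work, and replaces it with a placeholder for a considerably harder (and superfluous) induction.
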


Here, a holomorphic foliation \(\mathcal{F}\) of dimension \(d\) on a complex manifold \(M\) is a coherent analytic subsheaf \(\mathcal{F}\subseteq TM\) of rank \(d\) satisfying the Frobenius integrability condition \([\mathcal{F},\mathcal{F}]\subseteq \mathcal{F}\) (see \cite{bb72}). An analytic subspace \(S\subset M\) defined by the sheaf of ideals \(\mathcal{J}_S\subset \mathcal{O}(M)\) is invariant by \(\mathcal{F}\) if and only if \(\mathcal{F}\cdot\mathcal{J}_S\subset \mathcal{J}_S\), this is, if for every vector field \(Z\) in \(\mathcal{F}\) and every \(f\in \mathcal{J}_S\), \(Zf\in \mathcal{J}_S\). An invariant variety whose dimension is strictly smaller than that of \(\mathcal{F}\) is necessarily  contained in \(\mathrm{sing}(\mathcal{F})\).

The central ingredient in the proofs of these results is the \emph{Douady space} of the manifold \(M\), a complex space which parametrizes flat families of compact analytic subspaces of \(M\). We refer the reader to \cite{campana-peternell} for a discussion around this space as well as of some of its properties, which will be used without further comments.
	
We begin by establishing a preliminary result (compare with \cite[Lemma~2]{gomezmont-integrals}).

\begin{lemma} \label{lemma:app}	
Let \(M\) be a compact complex manifold, \(\mathcal{F}\) a foliation of dimension \(d\)  on \(M\). Let \(T\) be a compact complex analytic space, and \(S\subset M\times T\) an analytic subspace such that the induced projection \(\pi:S\to T\) is equidimensional with fibers of dimension \(d\). Then,  
\begin{enumerate} 
\item \label{item1:lemma:app} the subset of \(T\) corresponding to fibers of \(\pi\) having an irreducible component invariant by \(\mathcal{F}\), and 
\item \label{item2:lemma:app} the subset of \(M\) formed by points that belong to an irreducible component of a fiber of \(\pi\) invariant by \(\mathcal{F}\)
\end{enumerate}
are closed analytic subsets.
\end{lemma}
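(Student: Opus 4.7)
The plan is to establish (1) and (2) simultaneously by exhibiting a closed analytic subset \(U \subset S\) whose points \((p,t)\) are precisely the pairs such that \(p\) lies in some \(\mathcal{F}\)-invariant irreducible component of \(S_t\). Since \(M\) and \(T\) are compact, the projections \(M \times T \to T\) and \(M \times T \to M\) are proper, and Remmert's proper mapping theorem ensures that the images of \(U\) in \(T\) and in \(M\) are closed analytic subsets, yielding (1) and (2) respectively.

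To construct \(U\), I will first build the largest \(\mathcal{F}\)-invariant closed analytic subspace \(V\) of \(S\). Lift \(\mathcal{F}\) to a coherent subsheaf \(\widetilde{\mathcal{F}}\) of the tangent sheaf of \(M \times T\), acting trivially on the \(T\)-factor; its local sections act as derivations of \(\mathcal{O}_{M \times T}\). Starting from \(\mathcal{J}_0 := \mathcal{J}_S\), iteratively define \(\mathcal{J}_{n+1} := \mathcal{J}_n + \widetilde{\mathcal{F}} \cdot \mathcal{J}_n\) as the coherent ideal sheaf generated by \(\mathcal{J}_n\) and the derivatives \(Zf\) for local sections \(Z\) of \(\widetilde{\mathcal{F}}\) and \(f\) of \(\mathcal{J}_n\). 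Noetherianity of the analytic local rings combined with compactness of \(M \times T\) ensures that this ascending chain stabilizes at a coherent \(\widetilde{\mathcal{F}}\)-invariant ideal \(\mathcal{J}_\infty \supset \mathcal{J}_S\), and \(V := V(\mathcal{J}_\infty)\) is the largest \(\widetilde{\mathcal{F}}\)-invariant subspace of \(S\): any such \(W \subset S\) satisfies \(\mathcal{J}_W \supset \mathcal{J}_n\) for every \(n\) by induction, hence \(W \subset V\). Because \(\widetilde{\mathcal{F}}\) acts trivially on the \(T\)-factor, each fiber \(V_t \subset M\) is \(\mathcal{F}\)-invariant; conversely, any \(\mathcal{F}\)-invariant \(W \subset S_t\), embedded in \(S\) as \(W \times \{t\}\), is itself \(\widetilde{\mathcal{F}}\)-invariant and hence lies in \(V\). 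This identifies \(V_t\) with the largest \(\mathcal{F}\)-invariant analytic subspace \(S_t^{\mathcal{F}}\) of \(S_t\).

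To extract \(U\) from \(V\): since \(S_t\) has pure dimension \(d\), every \(d\)-dimensional irreducible component of \(V_t = S_t^{\mathcal{F}}\) is automatically an irreducible component of \(S_t\), and the invariance of \(V_t\) propagates to each of its components by the standard openness argument on their smooth loci away from \(\mathrm{sing}(\mathcal{F})\); conversely, every \(\mathcal{F}\)-invariant component of \(S_t\) sits inside \(S_t^{\mathcal{F}}\) as a \(d\)-dimensional component. Consequently
\[U = \{(p,t) \in V : \dim_{p} V_{\pi(p,t)} \geq d\},\]
which is closed analytic by upper semi-continuity of fiber dimension for the morphism \(V \to T\) (the inequality is automatically equality, since \(V_t \subset S_t\) forces \(\dim_p V_t \leq d\)). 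The delicate step I expect to be the fiberwise identification \(V_t = S_t^{\mathcal{F}}\), which hinges on tracking how the globally constructed \(\widetilde{\mathcal{F}}\)-invariant ideal \(\mathcal{J}_\infty\) restricts to each slice \(M \times \{t\}\); the dimension cut then cleanly isolates the invariant components of the fibers, bypassing the fact that \(V\) itself need not be equidimensional or flat over \(T\).
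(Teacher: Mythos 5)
Your proposal is correct and follows essentially the same route as the paper: lift \(\mathcal{F}\) to a foliation on \(M\times T\) tangent to the slices, saturate the ideal sheaf of \(S\) under derivation until it stabilizes (Noetherianity plus compactness) to obtain the maximal invariant subspace \(S^{\mathcal{F}}\subset S\), cut out the locus where its fibers over \(T\) have dimension \(d\) using semicontinuity of fiber dimension, and project to \(T\) and to \(M\) via the Proper Mapping Theorem. The only difference is presentational: you phrase \(V=S^{\mathcal{F}}\) via a universal property and spell out the fiberwise identification \(V_t=S_t^{\mathcal{F}}\) and the invariance of individual \(d\)-dimensional components, details the paper treats more briefly.
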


\begin{proof} Let \(\widehat{\mathcal{F}}\) be the foliation of dimension \(d\) on \(M\times T\) tangent to the fibers of \(\pi\),  and coinciding with \(\mathcal{F}\) along them. Let \(\mathcal{J}_S\subset \mathcal{O}(M\times T)\) be the sheaf of ideals defining~\(S\). Define recursively the ascending chain of sheaves of ideals \(\mathcal{J}^0_S\subseteq \mathcal{J}^1_S\subseteq \mathcal{J}^2_S\subseteq \cdots \) by setting \(\mathcal{J}^0_S=\mathcal{J}_S\), and defining \(\mathcal{J}^{i+1}_S\) as the sheaf generated by \(\mathcal{J}^i_S\) and the derivatives of the elements of \(\mathcal{J}^i_S\) along the vector fields tangent to \(\widehat{\mathcal{F}}\). By local Noetherianity and the compactness of \(M\times T\), the chain of ideals   stabilizes: there exists \(n\in\NN\) such that \(\mathcal{J}^{n+1}_S=\mathcal{J}^n_S\). Thus, for every \(f\in \mathcal{J}^n_S\) and every vector field \(Z\) tangent to \(\widehat{\mathcal{F}}\), \(Zf\in \mathcal{J}^n_S\), and \(\mathcal{J}^n_S\) defines an  analytic subspace \(S^{ \mathcal{F}}\subset M\times T\), invariant by \(\widehat{\mathcal{F}}\) and contained in \(S\).  
	
If \(X\subset M\) is an  analytic subspace invariant by \(\mathcal{F}\), and \(X\times \{t_0\}\subset S\), then if  \(X\times \{t_0\}\) is defined by the sheaf of ideals \(\mathcal{J}_X\subset \mathcal{O}(M\times T)\), since \(\mathcal{J}_S\subset \mathcal{J}_X\), and \(\widehat{\mathcal{F}}\cdot \mathcal{J}_X\subset \mathcal{J}_X\), we have that \(\mathcal{J}^i_S\subset \mathcal{J}_X\) for all \(i\), and \(X\times \{t_0\}\) is thus contained in \(S^{\mathcal{F}}\). Reciprocally, if \(X\subset M\) is an  analytic subspace, and \(X\times\{t_0\}\subset S'\), \(X\) is invariant by \(\mathcal{F}\). We also have that \(S\cap\mathrm{sing}(\widehat{\mathcal{F}})=S\cap(\mathrm{sing}(\mathcal{F})\times T)\) is contained in~\(S^{\mathcal{F}}\).

Let \(S'\) be the subset of \(S^\mathcal{F}\) formed by the \(x\) such that the dimension at \(x\) of \(\pi^{-1}\circ \pi(x)\cap S^\mathcal{F}\) is \(d\). By the semicontinuity of the fiber dimension \cite[Ch.~3, \S3.4]{fischer}, \(S'\) is a closed analytic subset of \(S^\mathcal{F}\) (and of~\(S\)). 

Let \(T'=\pi(S')\). It is the subset of \(T\) corresponding to the fibers of \(\pi\) having an irreducible component invariant by \(\mathcal{F}\). By the Proper Mapping Theorem \cite[Ch.~10, \S6]{grauert-remmert}, it is a closed analytic subspace of \(T\). This establishes item~(\ref{item1:lemma:app}).

The image of \(S'\) in \(M\) under the projection \(M\times T\to M\) is the set of item~(\ref{item2:lemma:app}). Again, by the Proper Mapping Theorem, it is a closed analytic subset of \(M\). This establishes item~(\ref{item2:lemma:app}).\end{proof}

\begin{proof}[Proof of Theorem~\ref{thm:inv_leavess}] Consider the Douady space \(\mathcal{D}(M)\) of \(M\). Since the ambient manifold is K\"ahler, each one of its  irreducible components is compact, and belongs to Fujiki's class \(\mathcal{C}\).  Let \(T\) be an irreducible component of \(\mathcal{D}(M)\) parametrizing analytic subspaces of dimension \(d\), and let \(S\subset M\times T\) be the restriction of the universal family to \(T\). Let \(T'\subset T\) and \(S'\subset M\times T'\) be  as in Lemma~\ref{lemma:app} (\(T'\) is also in Fujiki's class \(\mathcal{C}\)). The  image of \(S'\) under the projection onto \(M\) is, by  item (\ref{item2:lemma:app}) in Lemma~\ref{lemma:app}, a closed analytic subspace of \(M\). As \(T\) varies within the countably many irreducible components of \(\mathcal{D}(M)\), we obtain the countable union of analytic subspaces of \(M\) in the statement of the theorem. \end{proof}

\begin{proof}[Proof of Theorem~\ref{thm:gm_kahler}] We follow the closing argument in G\'omez-Mont's proof of Theorem~3 in \cite{gomezmont-integrals}. Under the standing hypothesis, by Theorem~\ref{thm:inv_leavess}, there is a component \(T\) of the  Douady space of \(M\) such that, with the previous notations, the restriction to \(S'\)  of the projection \(M\times T'\to M\) is a holomorphic onto map. This map is injective in the complement of a closed proper analytic subset (for there is only one leaf passing through a regular point of \(\mathcal{F}\)), and is thus a bimeromorphic map, having an inverse \(\phi:M\dashrightarrow S'\). The meromorphic map \(\pi'|_{S'}\circ\phi:M\dashrightarrow   T'\) is the sought one. \end{proof}

\begin{rmk} Together, Theorems~\ref{thm:inv_leavess} and~\ref{thm:gm_kahler} extend the main result in \cite{pereira-global} to encompass  singular foliations. This is essentially what allowed us, in the proof of Theorem~\ref{thm:rat1stint}, to go  from Proposition~\ref{prop:local_strong} to a setting where Theorem~\ref{thm:gm_kahler} could be used.\end{rmk}

We end by exhibiting some regular holomorphic foliations  on algebraic compact  manifolds with infinitely many compact leaves but without first integrals; not many examples of these seem to be known. In them, the union of the closed leaves is not contained in the union of finitely many proper closed analytic subspaces.

\begin{example}[Complex geodesic foliations of complex hyperbolic surfaces with infinitely many closed complex geodesics] Let \(q\) be a pseudo-Hermitian form on \(\CC^3\) of signature \((1,2)\), and let \(\Gamma\subset \mathrm{PU}(q)\) be a cocompact lattice. As  discussed in the beginning of Section~\ref{sec:geod-fol}, on the projectivized tangent bundle of the complex hyperbolic surface \(\Gamma\backslash \BB^2\), we have the complex geodesic foliation (a non singular holomorphic foliation by curves); its  compact leaves correspond to the closed complex geodesics in~\(\Gamma\backslash \BB^2\).
	
There exist plenty of forms \(q\) as above for which \(\mathrm{PU}(q)\) admits cocompact lattices, and for which the associated compact complex hyperbolic manifold  has infinitely many closed complex geodesics; see \cite[Section~1]{moller-toledo}. The simplest of these are the pseudo-Hermitian forms   defined over \(\QQ\), of signature \((1,2)\) that do not represent~\(0\). For these, \(\mathrm{P}(\mathrm{U}(q)\cap\mathrm{SL}(3,\ZZ)) \subset \mathrm{PU}(q)\) is a cocompact lattice, and the countably many rational planes of \(\CC^3\) in restriction to which \(q\) has signature \((1,1)\) give closed complex geodesics in~\(\Gamma\backslash \BB^2\).\end{example}

\subsection*{Acknowledgments} We thank Xavier G\'omez-Mont and Jorge Pereira for  enlightening conversations around the results discussed in Appendix~\ref{app:cp}. We are also grateful to Jorge Pereira for having generously allowed us to include Theorems~\ref{thm:inv_leavess} and~\ref{thm:gm_kahler} and their proofs here, and for his comments on this text.


\begin{thebibliography}{dlRG25}

\bibitem[Bar15]{barlet:hal-01175721}
\bgroup\scshape{}D.~Barlet\egroup{}, \emph{Feuilletage {\`a} feuilles
  compactes}, unpublished, 2015. Available at
  \url{https://hal.science/hal-01175721}.


\bibitem[BB70]{baum-bott}
\bgroup\scshape{}P. Baum\egroup{} and \bgroup\scshape{}R.~Bott\egroup{}, On
  the zeros of meromorphic vector-fields,  in \emph{Essays on {T}opology and
  {R}elated {T}opics ({M}\'{e}moires d\'{e}di\'{e}s \`a {G}eorges de {R}ham)},
  Springer, New York, 1970, pp.~29--47. 
  \doi{10.1007/978-3-642-49197-9_4}.
  
  
\bibitem[BB72]{bb72}
\bgroup\scshape{}P.~Baum\egroup{} and \bgroup\scshape{}R.~Bott\egroup{},
Singularities of holomorphic foliations,  \emph{J. Differential Geometry}
\textbf{7} (1972), 279--342.    Available at
\url{http://projecteuclid.org/euclid.jdg/1214431158}.


\bibitem[BM16]{bogomolov-mcquillan}
\bgroup\scshape{}F.~Bogomolov\egroup{} and
  \bgroup\scshape{}M.~McQuillan\egroup{}, Rational curves on foliated
  varieties,  in \emph{Foliation theory in algebraic geometry}, \emph{Simons
  Symp.}, Springer, Cham, 2016, pp.~21--51.  


\bibitem[BS15]{bracci-suwa}
\bgroup\scshape{}F.~Bracci\egroup{} and \bgroup\scshape{}T.~Suwa\egroup{},
  Perturbation of {B}aum-{B}ott residues,  \emph{Asian J. Math.} \textbf{19}
  no.~5 (2015), 871--885.   \doi{10.4310/AJM.2015.v19.n5.a4}.

\bibitem[Bru11]{brunella-ps}
\bgroup\scshape{}M.~Brunella\egroup{}, Uniformisation de feuilletages et
  feuilles enti\`eres,  in \emph{Complex manifolds, foliations and
  uniformization}, \emph{Panor. Synth\`eses} \textbf{34/35}, Soc. Math. France,
  Paris, 2011, pp.~1--52.  

\bibitem[Bru15]{brunella-birational}
\bgroup\scshape{}M.~Brunella\egroup{}, \emph{Birational geometry of
  foliations}, \emph{IMPA Monographs} \textbf{1}, Springer, Cham, 2015.  \doi{10.1007/978-3-319-14310-1}.

\bibitem[CP94]{campana-peternell}
\bgroup\scshape{}F.~Campana\egroup{} and
  \bgroup\scshape{}T.~Peternell\egroup{}, Cycle spaces,  in \emph{Several
  complex variables, {VII}}, \emph{Encyclopaedia Math. Sci.} \textbf{74},
  Springer, Berlin, 1994, pp.~319--349. 
  \doi{10.1007/978-3-662-09873-8\_9}.

\bibitem[CCG00]{CCGDLF}
\bgroup\scshape{}A.~Campillo\egroup{}, \bgroup\scshape{}M.~M.
  Carnicer\egroup{}, and \bgroup\scshape{}J.~Garc\'{\i}a de~la Fuente\egroup{},
  Invariant curves by vector fields on algebraic varieties,  \emph{J. London
  Math. Soc. (2)} \textbf{62} no.~1 (2000), 56--70.  
  \doi{10.1112/S0024610700008978}.


\bibitem[Can93]{candel}
\bgroup\scshape{}A.~Candel\egroup{}, Uniformization of surface laminations,
  \emph{Ann. Sci. \'{E}cole Norm. Sup. (4)} \textbf{26} no.~4 (1993), 489--516.
   \doi{10.24033/asens.1678}.



\bibitem[CGM95]{candel-gomezmont}
\bgroup\scshape{}A.~Candel\egroup{} and
  \bgroup\scshape{}X.~G\'{o}mez-Mont\egroup{}, Uniformization of the leaves of
  a rational vector field,  \emph{Ann. Inst. Fourier (Grenoble)} \textbf{45}
  no.~4 (1995), 1123--1133.  
  \doi{https://doi.org/10.5802/aif.1488}.

\bibitem[CP06]{coutinho-pereira}
\bgroup\scshape{}S.~C. Coutinho\egroup{} and \bgroup\scshape{}J.~V.
  Pereira\egroup{}, On the density of algebraic foliations without algebraic
  invariant sets,  \emph{J. Reine Angew. Math.} \textbf{594} (2006), 117--135.  \doi{10.1515/CRELLE.2006.037}.
  
\bibitem[dlRG25]{delarosa-parameter}
\bgroup\scshape{}D.~de~la Rosa~G\'omez\egroup{}, Parameter constraints and real
  structures in quadratic semicomplete vector fields on \(\mathbb{C}^3\),
  \emph{Int. J. Math. Math. Sci.} \textbf{2025} (2025), 7371818.
  \doi{10.1155/ijmm/7371818}.

\bibitem[DG23]{DG}
\bgroup\scshape{}B.~Deroin\egroup{} and \bgroup\scshape{}A.~Guillot\egroup{},
Foliated affine and projective structures,  \emph{Compos. Math.} \textbf{159}
no.~6 (2023), 1153--1187.    \doi{10.1112/s0010437x2300711x}.

  
\bibitem[Fis76]{fischer}
\bgroup\scshape{}G.~Fischer\egroup{}, \emph{Complex analytic geometry},
  \emph{Lecture Notes in Mathematics} \textbf{Vol. 538}, Springer-Verlag,
  Berlin-New York, 1976.  



\bibitem[Ghy00]{ghys-jouanolou}
\bgroup\scshape{}E.~Ghys\egroup{}, \`{A} propos d'un th\'{e}or\`eme de
  {J}.-{P}. {J}ouanolou concernant les feuilles ferm\'{e}es des feuilletages
  holomorphes,  \emph{Rend. Circ. Mat. Palermo (2)} \textbf{49} no.~1 (2000),
  175--180.    \doi{10.1007/BF02904228}.

\bibitem[Glu02]{Glutsyuk-simultaneous}
\bgroup\scshape{}A.~Glutsyuk\egroup{}, On simultaneous uniformization and local
  nonuniformizability,  \emph{C. R. Math. Acad. Sci. Paris} \textbf{334} no.~6
  (2002), 489--494.   \doi{10.1016/S1631-073X(02)02268-9}.

\bibitem[GM89]{gomezmont-integrals}
\bgroup\scshape{}X.~G\'{o}mez-Mont\egroup{}, Integrals for holomorphic
  foliations with singularities having all leaves compact,  \emph{Ann. Inst.
  Fourier (Grenoble)} \textbf{39} no.~2 (1989), 451--458.  
  \doi{10.5802/aif.1173}.

\bibitem[GR84]{grauert-remmert}
\bgroup\scshape{}H.~Grauert\egroup{} and \bgroup\scshape{}R.~Remmert\egroup{},
  \emph{Coherent analytic sheaves}, \emph{Grundlehren der mathematischen
  Wissenschaften}
  \textbf{265}, Springer-Verlag, Berlin, 1984.  
  \doi{10.1007/978-3-642-69582-7}.

\bibitem[Gui06]{guillot-fourier}
\bgroup\scshape{}A.~Guillot\egroup{}, Semicompleteness of homogeneous quadratic
  vector fields,  \emph{Ann. Inst. Fourier (Grenoble)} \textbf{56} no.~5
  (2006), 1583--1615.   \doi{10.5802/aif.2221}.

\bibitem[Gui07]{guillot-ihes}
\bgroup\scshape{}A.~Guillot\egroup{}, Sur les \'{e}quations d'{H}alphen et les
  actions de {${\rm SL}_2({\bf C})$},  \emph{Publ. Math. Inst. Hautes
  \'{E}tudes Sci.} no.~105 (2007), 221--294. 
  \doi{10.1007/s10240-007-0008-6}.

\bibitem[Gui18]{guillot-sigma}
\bgroup\scshape{}A.~Guillot\egroup{}, Quadratic differential equations in three
  variables without multivalued solutions: {P}art {I},  \emph{SIGMA Symmetry
  Integrability Geom. Methods Appl.} \textbf{14} (2018), Paper No. 122.
   \doi{10.3842/SIGMA.2018.122}.

\bibitem[Gui24]{guillot-survey}
\bgroup\scshape{}A.~Guillot\egroup{}, On the singularities of complete
  holomorphic vector fields in dimension two,  in \emph{Handbook of geometry
  and topology of singularities {VI}. {F}oliations}, Springer, Cham, 2024,
  pp.~1--37.   \doi{10.1007/978-3-031-54172-8\_1}.

\bibitem[GR12]{guillot-rebelo}
\bgroup\scshape{}A.~Guillot\egroup{} and \bgroup\scshape{}J.~Rebelo\egroup{},
  Semicomplete meromorphic vector fields on complex surfaces,  \emph{J. Reine
  Angew. Math.} \textbf{667} (2012), 27--65. 
  \doi{10.1515/crelle.2011.127}.

\bibitem[Hir73]{hirzebruch-hilbert}
\bgroup\scshape{}F.~E.~P. Hirzebruch\egroup{}, Hilbert modular surfaces,
  \emph{Enseign. Math. (2)} \textbf{19} (1973), 183--281.  
  
\bibitem[Ily24]{Ilyashenko-handbook}
\bgroup\scshape{}Y.~Ilyashenko\egroup{}, Persistence, uniformizanion and
  holonomy,  in \emph{Handbook of geometry and topology of singularities {V}.
  {F}oliations}, Springer, Cham, 2024, pp.~71--97.  
  \doi{10.1007/978-3-031-52481-3\_2}.

\bibitem[Inc44]{ince}
\bgroup\scshape{}E.~L. Ince\egroup{}, \emph{Ordinary {D}ifferential
  {E}quations}, Dover Publications, New York, 1944.  


\bibitem[Kob87]{kobayashi-vectorbundles}
\bgroup\scshape{}S.~Kobayashi\egroup{}, \emph{Differential geometry of complex
  vector bundles}, \emph{Publications of the Mathematical Society of Japan}
  \textbf{15}, Princeton University Press, Princeton, NJ, 1987, Kan\^o{}
  Memorial Lectures, 5.   \doi{10.1515/9781400858682}.

\bibitem[KO70]{kobayashi-ochiai-positive}
\bgroup\scshape{}S.~Kobayashi\egroup{} and \bgroup\scshape{}T.~Ochiai\egroup{},
  On complex manifolds with positive tangent bundles,  \emph{J. Math. Soc.
  Japan} \textbf{22} (1970), 499--525. 
  \doi{10.2969/jmsj/02240499}.


\bibitem[LN94]{lins-simultaneous}
\bgroup\scshape{}A.~Lins~Neto\egroup{}, Simultaneous uniformization for the
  leaves of projective foliations by curves,  \emph{Bol. Soc. Brasil. Mat.
  (N.S.)} \textbf{25} no.~2 (1994), 181--206.  
  \doi{10.1007/BF01321307}.


\bibitem[LNS96]{linsneto-soares}
\bgroup\scshape{}A.~Lins~Neto\egroup{} and \bgroup\scshape{}M.~G.
  Soares\egroup{}, Algebraic solutions of one-dimensional foliations,  \emph{J.
  Differential Geom.} \textbf{43} no.~3 (1996), 652--673.  
  \doi{10.4310/jdg/1214458327}.


\bibitem[Mok12]{mok}
\bgroup\scshape{}N.~Mok\egroup{}, Projective algebraicity of minimal
  compactifications of complex-hyperbolic space forms of finite volume,  in
  \emph{Perspectives in analysis, geometry, and topology}, \emph{Progr. Math.}
  \textbf{296}, Birkh\"{a}user/Springer, New York, 2012, pp.~331--354.  \doi{10.1007/978-0-8176-8277-4\_14}.

\bibitem[MT15]{moller-toledo}
\bgroup\scshape{}M.~M\"oller\egroup{} and \bgroup\scshape{}D.~Toledo\egroup{},
  Bounded negativity of self-intersection numbers of {S}himura curves in
  {S}himura surfaces,  \emph{Algebra Number Theory} \textbf{9} no.~4 (2015),
  897--912.   \doi{10.2140/ant.2015.9.897}.

\bibitem[MZ55]{montgomery-zippin}
\bgroup\scshape{}D.~Montgomery\egroup{} and
  \bgroup\scshape{}L.~Zippin\egroup{}, \emph{Topological transformation
  groups}, Interscience Publishers, New York-London, 1955. 

\bibitem[Per01]{pereira-global}
\bgroup\scshape{}J.~V. Pereira\egroup{}, Global stability for holomorphic
  foliations on {K}aehler manifolds,  \emph{Qual. Theory Dyn. Syst.} \textbf{2}
  no.~2 (2001), 381--384.   \doi{10.1007/BF02969347}.

\bibitem[Reb96]{rebelo-singularites}
\bgroup\scshape{}J.~C. Rebelo\egroup{}, Singularit\'{e}s des flots holomorphes,
   \emph{Ann. Inst. Fourier (Grenoble)} \textbf{46} no.~2 (1996), 411--428.
  \doi{10.5802/aif.1519}.

\bibitem[Reb99]{rebelo-nonisolees}
\bgroup\scshape{}J.~C. Rebelo\egroup{}, Champs complets avec singularit\'{e}s
  non isol\'{e}es sur les surfaces complexes,  \emph{Bol. Soc. Mat. Mexicana
  (3)} \textbf{5} no.~2 (1999), 359--395.



\bibitem[Ver87]{verjovsky}
\bgroup\scshape{}A.~Verjovsky\egroup{}, A uniformization theorem for
  holomorphic foliations,  in \emph{The {L}efschetz centennial conference,
  {P}art {III} ({M}exico {C}ity, 1984)}, \emph{Contemp. Math.} \textbf{58},
  Amer. Math. Soc., Providence, RI, 1987, pp.~233--253.  

\end{thebibliography}

\providecommand{\noopsort}[1]{}
\providecommand{\doi}[1]{\url{https://doi.org/#1}}
\providecommand{\href}[2]{#2}

\end{document}